\newtheorem{theorem}{Theorem}[section]
\newtheorem{lemma}[theorem]{Lemma}
\theoremstyle{definition}
\theoremstyle{remark}
\newtheorem{remark}{Remark}
\numberwithin{equation}{section}
\newcommand{\C}{\mathbb C}
\newcommand{\R}{\mathbb R}
 \newcommand{\N}{\mathbb N}
\newcommand{\Z}{\mathbb Z}
\newcommand{\E}{\mathbb E}
 \newcommand{\cF}{\mathcal{F}}
\newcommand{\fH}{\mathfrak{H}}
 \newcommand{\PP}{\mathbb P}
    \newcommand{\cO}{\mathcal O}
\newcommand{\1}{\textbf{1}}
\newcommand{\kk}{\kappa}
\newcommand{\e}{{\varepsilon}}
\newcommand{\var}{\textup{Var}}
\newcommand{\cov}{\textup{Cov}}
\newcommand{\absolute}[1]{\vert{#1}\vert}
\newcommand{\fixGraph}{\Gamma}
\newcommand{\vertices}[1]{\upsilon_{#1}}
\newcommand{\edges}[1]{\varepsilon_{#1}}
\begin{document}

\title[A discrete  second-order Gaussian Poincar\'e inequality with applications]{A simplified second-order Gaussian Poincar\'e inequality\\ in discrete setting with applications}

 \date{}

 \author[P.\ Eichelsbacher]{Peter Eichelsbacher} 
 \address{Faculty of Mathematics, Ruhr University Bochum, Germany.}
 \email{peter.eichelsbacher@rub.de}

 \author[B.\ Redno\ss]{Benedikt Redno\ss} 
 \address{Faculty of Mathematics, Ruhr University Bochum, Germany.}
 \email{benedikt.rednoss@rub.de}
  
 \author[C.\ Th\"ale]{Christoph Th\"ale} 
 \address{Faculty of Mathematics, Ruhr University Bochum, Germany.}
 \email{christoph.thaele@rub.de}

   
\author[G.\ Zheng]{Guangqu Zheng}  
\address{School of Mathematics, The University of Edinburgh, UK.}
\email{zhengguangqu@gmail.com}

\maketitle


\begin{abstract}  In this paper, a simplified second-order Gaussian Poincar\'e inequality for normal approximation of functionals over infinitely many  Rademacher random variables is derived. It is based on a new bound for the Kolmogorov distance between a general Rademacher functional and a Gaussian random variable, which is established by means of the discrete Malliavin-Stein method and is of independent interest. 
As an application, the number of vertices with prescribed degree and the subgraph counting statistic in the Erd\H{o}s-R\'enyi random graph are discussed. 
The number of vertices of fixed degree is also studied for percolation on the Hamming hypercube. Moreover, the number of isolated faces in the Linial-Meshulam-Wallach random $\kappa$-complex and infinite weighted 2-runs are treated.\\[0.5cm]
\medskip\noindent {\bf Mathematics Subject Classifications (2010)}: 05C80, 60F05, 60H07. \newline
\noindent {\bf Keywords:} Berry-Esseen bound, discrete stochastic analysis, Erd\H{o}s-R\'enyi random graph, infinite weighted 2-run, isolated face, hypercube percolation, Malliavin-Stein method, Rademacher functional, random simplicial complex, second-order Poincar\'e inequality, subgraph count, vertex of given degree.
  \end{abstract}

\tableofcontents

\section{Introduction and applications}

In the last decade, a wide range of quantitative central limit type results for random systems driven by either a Gaussian or a Poisson process have been obtained by means for the so-called Malliavin-Stein method, which combines the Malliavin calculus of variations on a Gaussian space or on configuration spaces with Stein's method for normal approximation. This Malliavin-Stein approach was first developed by Nourdin and Peccati during their successful attempt in quantifying the celebrated fourth moment theorem and it has been exploited for problems related to excursion sets of Gaussian random fields, nodal statistics of random waves, random geometric graphs, random tessellations or random polytopes, to name just a few. We refer to the monographs \cite{NPbook,PeccatiReitznerBook} as well as to the webpage \cite{WWW} for an extensive overview. In parallel to these developments, the Malliavin-Stein method has also been made available for discrete random structures, which can be described by possibly infinitely many independent Rademacher random variables taking values $+1$ and $-1$ only, see \cite{KRT16,KRT17,KT,NourdinPeccatiReinert}. Applications of this discrete Malliavin-Stein technique to quantitative central limit theorems for random graphs, random simplicial complexes and percolation models have been the content of \cite{KRT17,KT}. The technical backbone of  these papers is what is known as the {discrete second-order  Gaussian   Poincar\'e inequality}, which in turn relies on an abstract normal approximation bound previously developed in \cite{KRT16}. The purpose of the present article is to provide  a significantly simplified version of such an inequality,  whose proof  is based on a new and powerful bound for the normal approximation of non-linear functionals of an infinite Rademacher sequence, which is of independent interest. It is mainly guided by a monotonicity property of the solutions of Stein's equation for normal approximation. This property has been proved and applied at many places in Stein's method, but in the proof of Theorem 2.2 in \cite{ShaoZhang19} it was identified the first time
to prove significantly simplified bounds for normal and non-normal approximations for unbounded exchangeable pairs. Moreover in \cite{Shao19} the
same monotonicity argument was applied for improvements in further approaches of Stein's method. Beyond that our text has also been inspired by the recent work \cite{LRPY21} dealing with the normal approximation of Poisson functionals.

\subsection{Application to infinite weighted $2$-runs}

In order to demonstrate the power of the new bounds  we develop in this paper, we shall in this and the next subsections describe and  discuss  a number of applications.  The first is concerned with infinite weighted $2$-runs, which have previously been analysed in \cite{KRT16,NourdinPeccatiReinert}. We remark that our bound is of the same order as the one obtained in \cite[Proposition 5.3]{NourdinPeccatiReinert} for a smooth probability metric. At the same time it simplifies the statement and the proof of the Berry-Esseen bound from \cite[Theorem 6.1]{KRT16}. We recall that for two real-valued random variables $X$ and $Y$, {defined over the same probability space $(\Omega,\mathcal{A},\PP)$}, their  Kolmogorov distance is defined as
\[
d_K(X,Y):=\sup_{z\in\R}|\PP(X\leq z)-\PP(Y\leq z)|.
\]
In what follows, we use the usual big-O notation $\cO(\,\cdot\,)$ with the meaning that the implicit constant does not depend on the parameters in brackets. Throughout this paper, we write $N\sim \mathcal{N}(0,1)$ to mean that $N$ is a standard normal random variable. Moreover, for a sequence $\alpha=\{\alpha_i:i\in\Z\}$  and $p>0$ we write $\|\alpha\|_{\ell^p(\Z)}:=(\sum_{i\in\Z}|\alpha_i|^p)^{1/p}$.

\begin{theorem}\label{thm:2runs}
	Let $\{\xi_i, i\in\Z\}$ be a sequence of independent Bernoulli random variables satisfying $\PP(\xi_i = 0) =\PP(\xi_i = 1) =1/2$ and let $\alpha^{(n)} = \{\alpha^{(n)}_i: i\in\Z\} $ be a square-summable sequence for each $n\in\N$.  We define the infinite $2$-run $G_n := \sum_{i\in \Z} \alpha^{(n)}_i \xi_i \xi_{i+1}$, $n\in\N$. 
	Then  with $N\sim\mathcal{N}(0,1)$, we have 
	\begin{align}  
		d_K\left(  \frac{G_n - \E G_n }{\sqrt{\var(G_n) }},  N  \right)  &= \cO\left( \frac{\|\alpha^{(n)}\|_{\ell^4(\Z)}^2}{\var(G_n) } \right)    =  \cO\left(    \frac{\| \alpha^{(n)} \|_{\ell^4(\Z)}^2   }{\| \alpha^{(n)} \|_{\ell^2(\Z)}^2   } \right).  \label{2run1}
	\end{align}
\end{theorem}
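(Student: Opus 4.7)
The plan is to apply the simplified second-order Gaussian Poincar\'e inequality announced in the introduction (i.e., the new main normal-approximation bound of the paper, whose derivation via the discrete Malliavin--Stein method constitutes the technical core of the article) to the normalised functional $F_n := (G_n - \E G_n)/\sqrt{\var(G_n)}$. The preliminary step is to pass from Bernoulli to Rademacher variables by setting $X_i := 2\xi_i - 1$, so that $\xi_i = (1+X_i)/2$ and $\xi_i \xi_{i+1} = \tfrac{1}{4}(1 + X_i + X_{i+1} + X_i X_{i+1})$. This realises $G_n$ as an element living in the first two discrete Wiener chaoses, which is the correct input format for the abstract bound.

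Next I would compute the discrete Malliavin derivatives explicitly. Because $G_n$ is bilinear in the $\xi_i$, the first derivative $D_j G_n$ only sees the two summands with $i\in\{j-1,j\}$ and equals $\tfrac{1}{2}(\alpha_{j-1}^{(n)}\xi_{j-1} + \alpha_j^{(n)}\xi_{j+1})$. The iterated derivative $D_k D_j G_n$ vanishes unless $|k-j|=1$, in which case it is the \emph{deterministic} constant $\tfrac{1}{4}\alpha_{\min(j,k)}^{(n)}$, and all third derivatives vanish identically. This nearest-neighbour locality of the derivatives is the structural fact that drives every subsequent estimate.

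Then I would substitute these derivatives into the bound supplied by the simplified second-order Poincar\'e inequality. The resulting quartic sums take a schematic form
\[
\frac{1}{\var(G_n)^2}\sum_{j,k,\ell,m\in\Z} \alpha_{j-a}^{(n)}\alpha_{k-b}^{(n)}\alpha_{\ell-c}^{(n)}\alpha_{m-d}^{(n)} \cdot \1\{|j-\ell|\le 1\}\,\1\{|k-\ell|\le 1\}\,\1\{|j-m|\le 1\}\,\1\{|k-m|\le 1\},
\]
with expectations of products of $\xi_i$'s contributing only harmless constants. These are convolution-type expressions that collapse, after applying Young's inequality twice (or equivalently a direct Cauchy--Schwarz argument on the nearest-neighbour indicators), to a multiple of $\|\alpha^{(n)}\|_{\ell^4(\Z)}^4$. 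Taking the square root mandated by the second-order Poincar\'e inequality yields the first equality in the statement.

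To finish, I would derive the variance comparison $\var(G_n)\asymp \|\alpha^{(n)}\|_{\ell^2(\Z)}^2$ needed for the second equality. A direct covariance calculation gives $\var(G_n) = \tfrac{3}{16}\|\alpha^{(n)}\|_{\ell^2(\Z)}^2 + \tfrac{1}{8}\sum_{i\in\Z}\alpha_i^{(n)}\alpha_{i+1}^{(n)}$ (only lags $0$ and $\pm 1$ survive), and AM--GM controls the cross sum by $\tfrac12\|\alpha^{(n)}\|_{\ell^2(\Z)}^2$, giving two-sided bounds. The main obstacle will not be conceptual but combinatorial: the careful bookkeeping in the third step, i.e., enumerating which shifts of indices arise from the four combinations of $i\in\{j-1,j\}$ in $D_j G_n$ and its counterparts, and organising the convolution estimates so that every resulting term reduces cleanly to $\|\alpha^{(n)}\|_{\ell^4(\Z)}^4$ with no stray $\ell^2$ factors left behind.
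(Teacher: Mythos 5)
Your overall strategy is sound and, at the level of computations, matches what the paper actually does: the reduction $X_i=2\xi_i-1$, the explicit first and second discrete gradients (your formulas $D_jG_n=\tfrac12(\alpha^{(n)}_{j-1}\xi_{j-1}+\alpha^{(n)}_j\xi_{j+1})$ and $D_kD_jG_n=\tfrac14\alpha^{(n)}_{\min(j,k)}\1_{\{|j-k|=1\}}$ are correct), the collapse of the nearest-neighbour quartic sums to $\|\alpha^{(n)}\|_{\ell^4(\Z)}^4$, and the variance identity $\var(G_n)=\tfrac3{16}\|\alpha^{(n)}\|^2_{\ell^2(\Z)}+\tfrac18\sum_i\alpha^{(n)}_i\alpha^{(n)}_{i+1}$ with a two-sided comparison as in \eqref{var_G_n}. (Two minor slips: AM--GM bounds the cross sum by $\|\alpha^{(n)}\|^2_{\ell^2(\Z)}$, not $\tfrac12\|\alpha^{(n)}\|^2_{\ell^2(\Z)}$ --- the lower bound $\tfrac1{16}\|\alpha^{(n)}\|^2_{\ell^2(\Z)}\le\var(G_n)$ still follows; and the terms $B_1,\dots,B_5$ involve at most three summation indices, not four, though your schematic estimate goes through unchanged.)

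The genuine gap is the very first step: you cannot invoke Theorem \ref{thm:2ndOrderPoincare} off the shelf in this setting. Here $p_k\equiv 1/2$ for all $k\in\Z$, so $\sum_k p_kq_k=\infty$, and $F_n$ in general depends on infinitely many Rademacher variables; hence neither case (a) nor case (b) of part (2) applies, and even formally the factor $\kappa=\sum_k p_kq_k$ in \eqref{2nd_R2} is infinite, making that bound vacuous. Part (1) of Theorem \ref{thm:2ndOrderPoincare} (equivalently, Theorem \ref{Kol_RAD}) requires the hypotheses \eqref{condition_1} and $(pq)^{-1/2}DF|DL^{-1}F|\in\text{Dom}(\delta)$, and the convenient sufficient conditions of Remark \ref{rem1} fail here for exactly the same reasons; you never verify them. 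The paper closes precisely this gap by exploiting the finite chaos structure: since $F_n\in\C_1\oplus\C_2$, hypercontractivity gives $F_n\in L^4(\Omega)$, and the arguments of Remark \ref{rem_FMT} (via Lemma 3.7 of \cite{DK19} and conditions (2.14)--(2.15) of \cite{KRT17}) show that the relevant $u$ lies in $\text{Dom}(\delta)$ and that the key estimate \eqref{need2} holds; the first term is then handled through the explicit contraction expression for $\langle DF,-DL^{-1}F\rangle$ and the second through \eqref{need22}, which is where your $B_3,B_4,B_5$ computations re-enter. Alternatively you could repair your route by truncating $\alpha^{(n)}$ to finitely many indices (so that case (b) applies), proving the bound uniformly in the truncation, and passing to the limit in Kolmogorov distance using that the Gaussian distribution function is continuous; but as written, the applicability of the abstract inequality is asserted, not established, and that is the one point where the proof does not yet go through.
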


\subsection{Application to the Erd\H{o}s-R\'enyi random graph}  

\begin{figure}[t]
	\includegraphics[trim = 20mm 20mm 20mm 20mm, width=0.31\columnwidth]{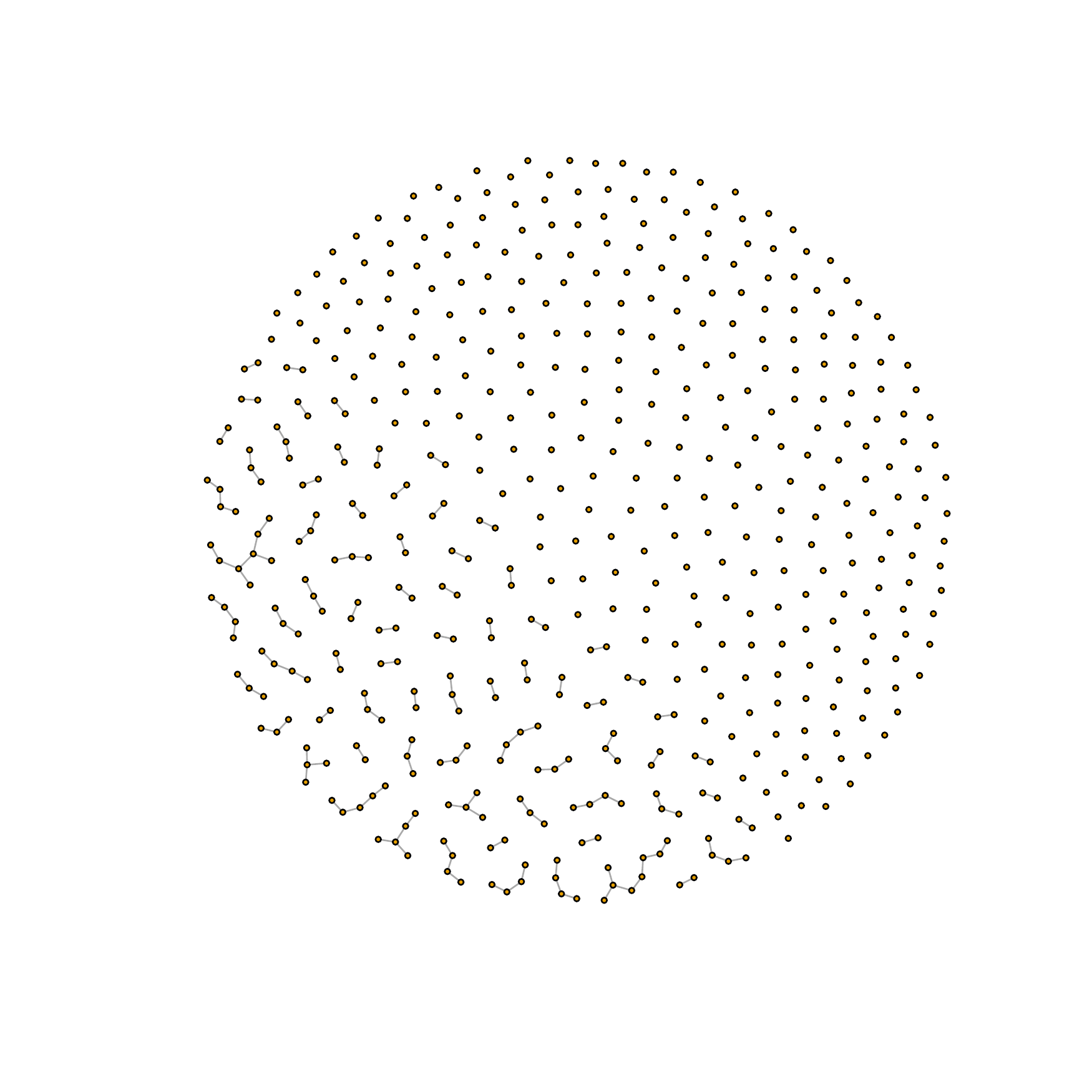}
	\includegraphics[trim = 20mm 20mm 20mm 20mm, width=0.31\columnwidth]{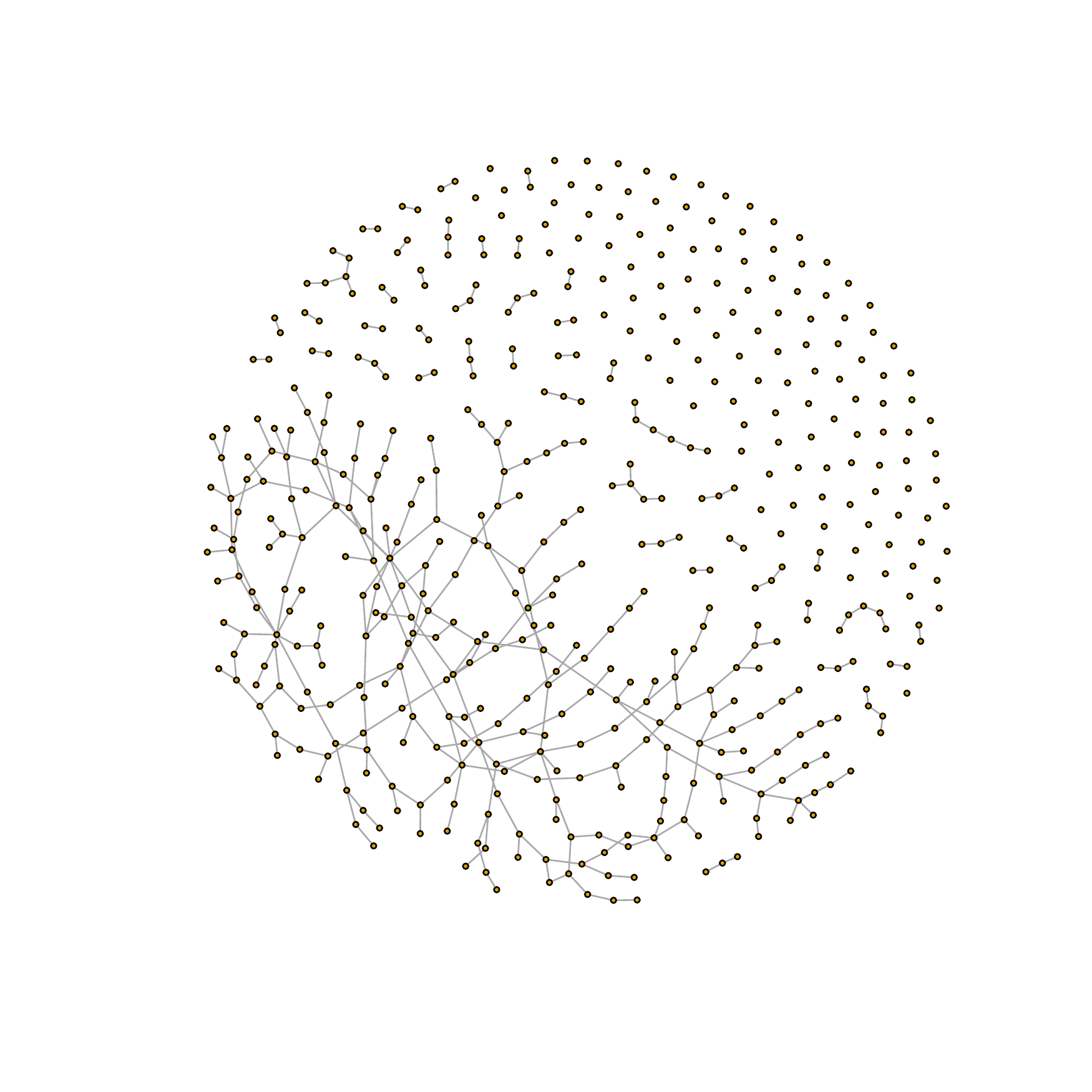}
	\includegraphics[trim = 20mm 20mm 20mm 20mm, width=0.31\columnwidth]{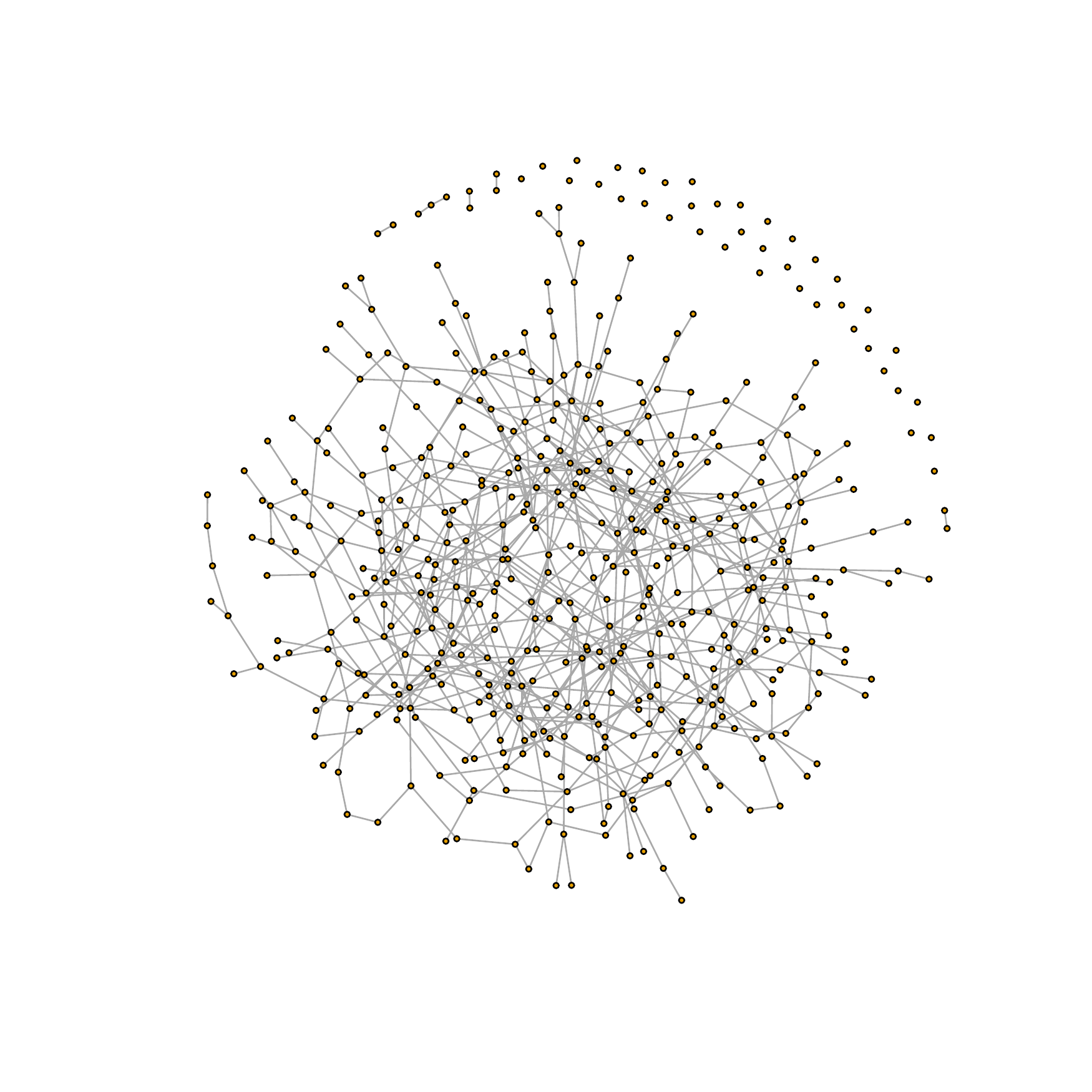}
	\caption{Simulations of the Erd\H{o}s-Renyi random graph $\mathbf{G}(n,p)$ with $n=500$ and $p=0.001$ (left panel), $p=0.003$ (middle panel) and $p=0.005$ (right panel). The graphics were produced using the \texttt{R}-package \texttt{igraph}.}
	\label{fig:ERgraph}
\end{figure}

We turn now to a first more sophisticated application concerning the classical Erd\H{o}s-R\'enyi random graph $\mathbf{G}(n,p)$.  This random graph  arises by keeping each edge of the complete graph on $n$ vertices with probability $p\in[0,1]$ and by removing it with probability $q:=1-p$, and where the decisions for the individual edges are taken independently, see Figure \ref{fig:ERgraph} for simulations. We remark that, although we shall not make this visible in our notation, we allow the probability $p$ to depend on the number of vertices $n$.

We are first interested in the number $S$ of subgraphs of $\mathbf{G}(n,p)$,
which are isomorphic\footnote{We say two graphs $G_1 = (V_1, E_1), G_2 =(V_2, E_2)$ are isomorphic if  there is a bijection $f: V_1 \to V_2$ such that any two vertices $u,v$ are adjacent in $G_1$ if and only if   $f(u)$, $f(v)$ are adjacent in $G_2$.} to a fixed graph $\fixGraph$ with at least one edge.
To study the asymptotic normality of $S$, as $n\to\infty$, we define 
\begin{align*}
W := \frac{S-\E S}{\sigma}
\end{align*}
with $\sigma := \sqrt{\var(S)}$.
Furthermore, we use the following notation.
For any graph $H$,
let $\vertices{H}$ be the number of vertices of $H$ and $\edges{H}$ its number of edges.
Finally, we define the quantity
$$
\psi = \psi(n,p,\fixGraph) := \min_H n^{\vertices{H}}p^{\edges{H}},
$$
where the minimum is taken among all subgraphs $H$ of $\fixGraph$ with at least one edge.
It is known from \cite[Theorem 2]{Ru88} that a central limit theorem for $W$ holds, as $n\to\infty$,
if and only if $\psi\to\infty$ and $n^2q\to\infty$.
This necessary and sufficient condition for asymptotic normality of $W$ is equivalent to the condition that $q\psi \to \infty$.
The corresponding Berry-Esseen bound is given in the following theorem.

\begin{theorem}\label{thm:subgraphs}
	Let $N\sim\mathcal{N}(0,1)$ be a standard Gaussian random variable.
	Then 
	\begin{align*}  
		d_K(W,N)  &= \cO( (q\psi)^{-\frac{1}{2}} ).
	\end{align*}
\end{theorem}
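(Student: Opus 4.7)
The plan is to apply the simplified second-order Gaussian Poincaré inequality developed earlier in the paper to the standardised subgraph count $W$, viewed as a functional of independent Rademacher variables indexed by the edges of the complete graph $K_n$. To each potential edge $e$ of $K_n$ I would attach $X_e \in \{+1,-1\}$ with $\PP(X_e=+1)=p$ and $\PP(X_e=-1)=q$, so that $e$ is present in $\mathbf{G}(n,p)$ precisely when $X_e=+1$. Writing $S = \sum_{\alpha \cong \fixGraph} J_\alpha$ with $J_\alpha := \prod_{e \in \alpha} \mathbf{1}_{\{X_e=+1\}}$ and $\alpha$ ranging over the copies of $\fixGraph$ inside $K_n$, exhibits $F := W$ as a square-integrable functional of the (finite) Rademacher sequence to which the abstract bound is applicable.

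Next I would compute the first and second discrete Malliavin derivatives $D_e F$ and $D_{e,e'} F$. Because flipping a single edge affects only those copies of $\fixGraph$ that contain it, $D_e S$ is proportional to $\sum_{\alpha \ni e}\prod_{e' \in \alpha\setminus\{e\}} \mathbf{1}_{\{X_{e'}=+1\}}$, and analogously $D_{e,e'}S$ is supported on copies of $\fixGraph$ containing both $e$ and $e'$. Plugging these into the new second-order Poincaré bound reduces $d_K(W,N)$ to a sum of expressions of the form
\begin{align*}
\frac{1}{\sigma^2}\sqrt{\sum_{e,e',e''} \E\big[(D_{e,e'}F)^2(D_{e,e''}F)^2\big]}
\end{align*}
together with cubic moments of $D_e F$ weighted by powers of $\sigma^{-1}$.

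Taking expectations turns each such quantity into a sum indexed by tuples of copies of $\fixGraph$ sharing prescribed subgraphs. Using the classical Ruci\'nski-type enumeration
\begin{align*}
\sum_{\alpha,\beta \cong \fixGraph,\ \alpha \cap \beta \supseteq H} \PP(\text{all edges of }\alpha \cup \beta\text{ are present}) \;\asymp\; n^{2\vertices{\fixGraph}-\vertices{H}}\, p^{2\edges{\fixGraph}-\edges{H}},
\end{align*}
together with its analogue for triples of copies, each such combinatorial sum is controlled by a power of $\psi^{-1}$ coming from the overlap subgraph $H$ which minimises $n^{\vertices{H}}p^{\edges{H}}$. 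Combining this with the classical variance lower bound $\sigma^2 \gtrsim q\,(\E S)^2/\psi$, which rests on the same combinatorial identities, produces exactly the rate $(q\psi)^{-1/2}$ after cancellation of the common $(\E S)$-type factors.

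The main obstacle is the bookkeeping in this last step: for every term produced by the abstract bound one has to identify the dominant subgraph $H \subseteq \fixGraph$ contributing the worst overlap, and verify that the extra factor of $pq$ coming from the Bernoulli variance of each $X_e$ combines with the Ruci\'nski counts so as to leave a uniform bound of order $(q\psi)^{-1/2}$ across all terms. Once this matching is in place, the theorem follows essentially mechanically from the second-order Gaussian Poincar\'e inequality, thereby dispensing with the more delicate product-formula manipulations used in the previous treatment \cite{KRT17,KT}.
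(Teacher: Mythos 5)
Your setup (Rademacher variables indexed by the edges of $K_n$, discrete gradients supported on copies of $\fixGraph$ through the flipped edges, Ruci\'nski-type enumeration, and the variance asymptotics $\sigma^2\asymp q\,n^{2\vertices{\fixGraph}}p^{2\edges{\fixGraph}}/\psi$) agrees with the paper, but the central claim---that the result then follows ``essentially mechanically'' from the second-order Gaussian Poincar\'e inequality of Theorem \ref{thm:2ndOrderPoincare}---is precisely where the argument fails, and it is precisely the step the paper avoids. Plugging the subgraph count into the second-order Poincar\'e inequality is the strategy of \cite[Theorems 1.1 and 1.2]{KRT17}, and it yields the optimal rate only in special regimes: the Cauchy--Schwarz steps that produce $B_1,\dots,B_5$ discard the cancellation coming from the independence of gradient products attached to non-overlapping unions of copies of $\fixGraph$. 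Concretely, take $\fixGraph$ a triangle and $p=n^{-\alpha}$ with $0<\alpha<1/2$, so $\psi=n^2p$ and the target rate is $(q\psi)^{-1/2}\asymp n^{-1}p^{-1/2}$. Here $D_kW=\sigma^{-1}\sqrt{pq}\,T_k$ with $T_k$ the number of present $2$-paths joining the endpoints of $e_k$ (so $\E[T_k^2]\asymp(np^2)^2$), and $|D_\ell D_kW|\asymp\sigma^{-1}pq\,\1_{\{\text{completing edge present}\}}$ when $e_k,e_\ell$ share a vertex; counting the $\asymp n^4$ admissible triples $(j,k,\ell)$ gives $B_1\asymp(np)^{-2}$, hence $\sqrt{B_1}\asymp(np)^{-1}$, which exceeds the target by the divergent factor $p^{-1/2}$ (and in variant (2) of Theorem \ref{thm:2ndOrderPoincare} the term $\kappa B_3\asymp1$ is even worse, while $B_4$ also overshoots in part of the range). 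So the ``bookkeeping'' you defer cannot be made to work: the Poincar\'e-type terms simply do not match $(q\psi)^{-1}$ when $p\to0$, which is why this was a long-standing open problem rather than a routine application.

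The paper's proof instead returns to the abstract bound of Theorem \ref{Kol_RAD}(2): it keeps the first term as $\sqrt{\var(\langle DW,-DL^{-1}W\rangle)}$ and bounds the $\delta$-term by the quantities $C_2,C_3$ following (4.9)--(4.11) of \cite{KRT17}, and then evaluates everything explicitly from the expansion $I_\gamma=\sum_{\varnothing\neq A\subset\gamma}p^{\edges{\fixGraph}-|A|/2}q^{|A|/2}\prod_{a\in A}Y_a$, which is where the weights $1/|A|$ coming from $L^{-1}$ and the exact powers of $p$ and $q$ enter. The two decisive gains, both invisible to the Poincar\'e bound, are: (i) the covariances in $C_1$ vanish unless $\gamma_1\cup\gamma_2$ and $\gamma_3\cup\gamma_4$ share an edge, which forces a third nontrivial overlap $h_2$ and buys the extra factor $\psi^{-1}$; and (ii) the pointwise estimates \eqref{f1bound}--\eqref{f3bound}, whose careful tracking of $q$-powers produces the factor $q$ in $(q\psi)^{-1/2}$. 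A proof along your lines would have to descend to this level of explicit chaos/covariance computation (or use the multiplication-formula route of \cite{PS20}); note also the preliminary reduction to $\fixGraph$ without isolated vertices, needed so that copies of $\fixGraph$ are identified with their edge sets.
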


The rate of convergence in Theorem \ref{thm:subgraphs} has first been obtained for Wasserstein distance in \cite[Theorem 2]{BKR89} from a general bound for the normal approximation of so-called decomposable random variables. It has been a long standing problem whether the same rate could be achieved for Kolmogorov distance. In \cite{R03} the decomposition of \cite{BKR89} was considered and the author managed to prove a Berry-Esseen theorem, if all components of the decomposition are assumed to be bounded. But interesting enough, this does not lead to an optimal result as in Theorem \ref{thm:subgraphs}. Some special cases for Kolmogorov distance have been settled in \cite[Chapter 10]{modbook} (the case of fixed $p$), \cite[Theorem 1.1]{Rol17} and \cite[Theorems 1.1 and 1.2]{KRT17}.
In full generality this theorem has first been proven in \cite[Theorem 4.2]{PS20}
using the normal approximation bound from \cite[Proposition 4.1]{KRT17}
and a multiplication formula for discrete multiple stochastic integrals {(the additional assumption in \cite{PS20} that the graph $\Gamma$ has no isolated vertices is not necessary and can be removed as we explain at the beginning of the proof of Theorem \ref{thm:subgraphs} in Section \ref{sec:ApplSubgraphs})}.
Recently in \cite{ER01}, two of us derived an alternative proof, combining the decomposition of \cite{BKR89} with     elements of Stein's method
and with the theory of characteristic functions, sometimes called Stein-Tikhomirov method. In the present paper we will provide  yet another proof of this result, which is almost purely combinatorial and, as we think, conceptually easier than the one in \cite{PS20}. In fact, our proof is based on Theorem \ref{Kol_RAD},
which is a simplified version of the normal approximation bound in \cite[Proposition 4.1]{KRT17} used in \cite{PS20}. Additionally, our proof does not require a multiplication formula for discrete multiple stochastic integrals.

\medskip

We turn now to the number $V_d$ of vertices of $\mathbf{G}(n,p)$ having degree equal to $d$  for fixed $d\in\N_0:=\N\cup\{0\}$. To study the asymptotic normality of $V_d$, as $n\to\infty$, we define  
\begin{align}\label{def_F_d}
F_{d} := {V_{d}-\E V_{d}\over\sqrt{\var(V_{d})}}.
\end{align}
We know from \cite[Theorem 8]{BKR89} and \cite[Theorem 6.36]{JLRBook}   that, as $n\to\infty$,
\begin{itemize} 
\item a central limit theorem for $F_0$ holds if and only if $n^2p\to\infty$ and $np-\log n\to -\infty$;
\item  a central limit theorem for $F_d$ with $d\geq 1$ holds if and only if $n^{d+1}p^d\to\infty$ and $np-\log n-d\log\log n\to -\infty$. 
\end{itemize} The next result yields a corresponding Berry-Esseen bound, significantly improving the rate and the range of applicability of \cite[Theorem 1.3]{KRT17}.   We remark that the special case where $d\geq 1$ and $p$ is of the order $1/n$ has been treated in \cite[Theorem 2.1]{GoldsteinDegree} by much more sophisticated methods, while a Berry-Esseen bound for $d=0$ is the content of \cite{Kordecki}. 

\begin{theorem}\label{thm:VertexDegrees}
	Fix $d\in\N_0$ and let $N\sim\mathcal{N}(0,1)$ be a standard Gaussian random variable.
	\begin{itemize}
		\item[(a)] Assume that $np = \cO(1)$ and $n^2p\to\infty$, as $n\to\infty$. Then  
		\begin{align}\label{Kol_F_0}
			d_K(F_0,N) =   \cO(n^{-1} p^{-1/2}).
		\end{align}
		\item[(b)] Assume that $np = \cO(1)$ and $(np)^{-d}p^{1\over 2}\to 0$. Then
		\begin{align} \label{Kol_F_d}
			d_K(F_d,N) = \begin{cases}
				\cO\big(n^{-1/2}\big), & \text{if in addition $\liminf\limits_{n\to\infty}np>0$;} \\
				\cO\big((np)^{-d}p^{1\over 2}\big),  & \text{if in addition $np\to 0$}.
			\end{cases}
		\end{align}
	\end{itemize}
\end{theorem}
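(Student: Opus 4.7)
The plan is to realize $V_d$ as a Rademacher functional and then apply the simplified second-order Gaussian Poincar\'e inequality that the paper derives from Theorem~\ref{Kol_RAD}. Each potential edge $e\in\binom{[n]}{2}$ of the complete graph on $[n]$ is encoded by an independent Rademacher variable $X_e\in\{+1,-1\}$ with $\PP(X_e=+1)=p$, and $V_d=\sum_{v=1}^n\mathbf{1}\{\deg(v)=d\}$ is a measurable functional of $(X_e)_e$.

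The first step is to compute the discrete Malliavin derivatives. Toggling a single edge $e=\{u,v\}$ changes only $\deg(u)$ and $\deg(v)$ by one and leaves all other degrees untouched, so $D_eV_d$ is supported on the event that $\deg^{(e)}(u)\in\{d-1,d\}$ or $\deg^{(e)}(v)\in\{d-1,d\}$, where $\deg^{(e)}$ denotes the degree computed without the edge $e$. Consequently one obtains the key structural property
\[
D_{e,f}V_d=0 \qquad \text{whenever } e\cap f=\emptyset,
\]
since $D_eV_d$ depends on $X$ only through $\deg^{(e)}(u)$ and $\deg^{(e)}(v)$, which are unaffected by flipping a disjoint edge $f$. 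In particular, the second-order sums appearing in the Poincar\'e bound are supported on pairs of edges sharing a vertex, of which there are only $\cO(n^3)$ instead of the naive $\cO(n^4)$, and this is precisely the saving that drives the claimed rates.

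Next I would estimate the moments of the derivatives. One has $|D_eV_d|\leq 2\sqrt{pq}$ pointwise, and by the binomial distribution of $\deg^{(e)}(u)$ the support of $D_eV_d$ has probability of order $(np)^{d-1}e^{-np}\vee (np)^d e^{-np}$; analogous estimates hold for $D_{e,f}V_d$ with $|D_{e,f}V_d|=\cO(pq)$ on the incident pairs. Inserting these pointwise bounds into the second-order Poincar\'e inequality yields combinatorial sums over the $\cO(n^2)$ edges and $\cO(n^3)$ vertex-sharing edge pairs whose order is determined by the above probabilities. Dividing by the appropriate power of $\sigma=\sqrt{\var(V_d)}$ gives the Berry-Esseen rate, where the variance is obtained from a standard second-moment computation: in case (a) one finds $\var(V_0)\asymp n^2p$ throughout the stated regime, hence $\sigma^{-1}\asymp n^{-1}p^{-1/2}$; in case (b) one has $\var(V_d)\asymp n$ when $\liminf np>0$ and $\var(V_d)\asymp n(np)^d$ when $np\to 0$.

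The main obstacle is the careful bookkeeping of the several mixed moment sums, especially in case (b), whose two sub-regimes require separate treatment, and in controlling joint moments such as $\E[(D_eV_d)^2(D_fV_d)^2]$ and $\E[(D_{e,g}V_d)^2(D_{f,g}V_d)^2]$ with the correct incidence-dependent factors. The structural vanishing of $D_{e,f}V_d$ for disjoint $e,f$ is what ultimately tames the combinatorics and enables the improvement over the rate of \cite[Theorem 1.3]{KRT17}.
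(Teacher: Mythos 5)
Your proposal follows essentially the same route as the paper: encode the edges by Rademacher variables, apply the simplified second-order Poincar\'e inequality, exploit the pointwise bounds $|D_kF_d|\leq 2\sqrt{pq}/\sqrt{\var(V_d)}$ and the vanishing of $D_\ell D_kF_d$ unless $e_k$ and $e_\ell$ share a vertex, and divide by the variance asymptotics $\var(V_0)\asymp n^2p$, $\var(V_d)\asymp n$ (if $\liminf np>0$) and $\var(V_d)\asymp n^{d+1}p^d$ (if $np\to0$), exactly as in Section 7. The only divergences are that your extra support-probability estimates for $D_eV_d$ are unnecessary (the paper inserts the deterministic bounds directly into $B_1,\ldots,B_5$ and the stated rates already follow, so this refinement only complicates the mixed-moment bookkeeping you list as the main obstacle), and that you should make sure to use the five-term bound \eqref{2nd_R1} of Theorem \ref{thm:2ndOrderPoincare} rather than \eqref{2nd_R2}, since $\kappa B_3\asymp n^2p\cdot B_3$ is of constant order in regime (a) and would destroy the rate there.
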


\begin{remark}
In particular, in the set-up studied in \cite{KRT17}, that is, if $p=\theta n^{-\alpha}$, where $\alpha \in [1,2)$ with $\theta=\theta(n) \in (0,n^\alpha)$ is such that $ \liminf\limits_{n \rightarrow \infty} \theta >0$, then Theorem \ref{thm:VertexDegrees} yields that
\[
d_K(F_0,Z) \leq \cO(n^{-1+\frac{1}{2}\alpha})
\]
for the number of isolated vertices. Moreover, if $d\geq 1$, we may choose $\alpha\in[1,1+{1\over 2d-1})$ and deduce that
$$
d_K(F_d,Z) = \cO(n^{-d-{\alpha\over 2}+\alpha d})\,.
$$
We would like to point out that this is not the full range of values of $\alpha$ for which the vertex counting statistics $F_d$ satisfy a central limit theorem. The range for $d\in\{0, 1\}$ is in fact optimal, while Theorem 6.36 in \cite{JLRBook} implies that for $d\geq 2$ a central limit theorem for $F_d$ applies if and only if $\alpha\in[1,1+1/d)$. Still, Theorem \ref{thm:VertexDegrees} improves \cite[Theorem 1.3]{KRT17}, which yields for $d\geq 1$ the considerably weaker bound of order $\cO(n^{1/2-3d/2-\alpha+3\alpha d/2})$ if $\alpha\in[1,1+1/(3d-2))$ (for $d=0$ the bound is the same as the one we get). Moreover, in the special case where $\alpha=1$, our bound in Theorem \ref{thm:VertexDegrees} shares the same quality as the corresponding results in \cite[Theorem 2]{Kordecki} and \cite[Theorem 2.1]{GoldsteinDegree}, but our argument is much simpler.
\end{remark}

\medspace

\begin{figure}[t]
\includegraphics[width=0.8\columnwidth]{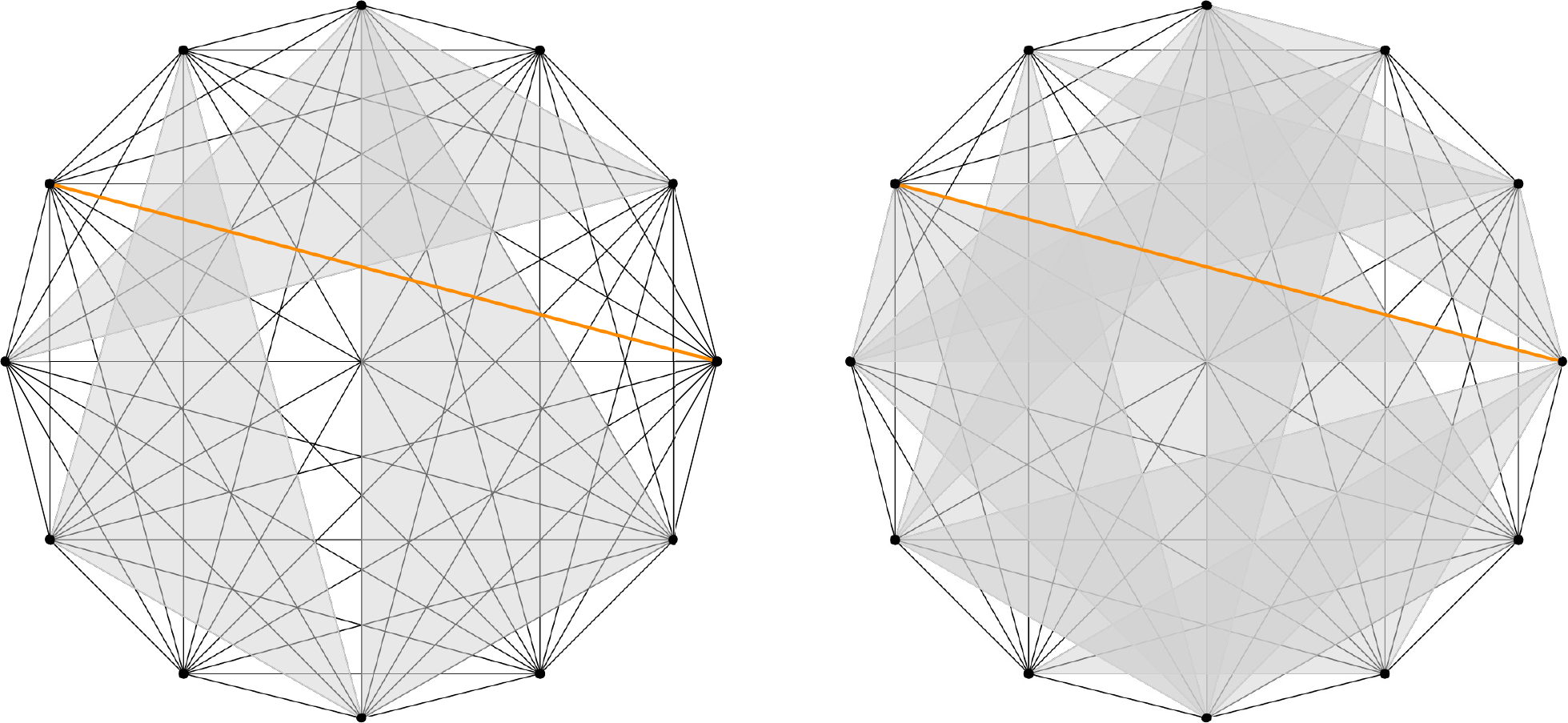}
\caption{Illustration of a random $2$-complex ${\bf Y}_2(n,p)$ with $n=12$ with small $p$ (left panel) and large $p$ (right panel). The orange edge is isolated on the left, while the same edge is not isolated any more on the right.}
\label{fig:2complex}
\end{figure}

\subsection{Application to the random $\kk$-complex}

Partially generalizing Theorem \ref{thm:VertexDegrees}, we are now going to discuss the number of isolated faces in the {Linial-Meshulam-Wallach random $\kk$-complex} $\mathbf{Y}_\kappa(n,p)$ for integers $\kappa\geq 1$ (see \cite{LinialMeshulam,MeshulamWallach}). A geometric realization of $\mathbf{Y}_\kk(n,p)$ is obtained by independently adding $\kk$-faces with probability $p\in[0,1]$ to the full $(\kk-1)$-skeleton of an $(n-1)$-dimensional simplex with $n$ vertices. Here,
a $\kappa$-face is a convex hull of $\kappa+1$ vertices of the $(n-1)$-dimensional simplex, for example, a $0$-face is just a vertex and a $1$-face is just an edge. We say that a $(\kk-1)$-face in the random complex $\mathbf{Y}_\kk(n,p)$ is isolated provided that it is not contained in the boundary of any $\kk$-simplex in ${\bf Y}_\kk(n,p)$ (in the literature such faces are also known as maximal faces). We remark that taking $\kk=1$, the random $\kk$-complex reduces to the Erd\H{o}s-R\'enyi random graph $\mathbf{G}(n,p)$ and the notion of an isolated $0$-face to that of an isolated vertex as discussed above. The isolated faces in ${\bf Y}_\kk(n,p)$ are of considerable interest in stochastic topology, because they are the last obstacle before homological connectivity is reached. In fact, it has been shown that the threshold $p=\kk(\log n)/n$ for the non-existence of isolated faces coincides with that of vanishing homology $H^{\kk-1} \big({\bf Y}_\kk(n,p);G \big)$ of order $\kk-1$ with values either in an arbitrary finite abelian group $G$ (see \cite{LinialMeshulam,MeshulamWallach}) or in $G=\Z$ (see \cite{LutzakPeled}). We remark that this parallels the behaviour for the Erd\H{o}s-R\'enyi random graph. The next theorem provides a Berry-Esseen bound for the number of isolated faces in ${\bf Y}_\kk(n,p)$ and generalizes part (a) of Theorem \ref{thm:VertexDegrees}. As above, we allow $p$ to depend on $n$ in what follows without highlighting this in our notation.

\begin{theorem}\label{thm:Isolated faces}
	Fix $\kk\in\N$. Let $I$ be the number of isolated  $(\kappa-1)$-faces in ${\bf Y}_\kk(n,p)$ and define $F:=(I-\E I)/\sqrt{\var(I)}$.  If  $np=\cO(1)$ and $n^{\kk+1}p\to\infty$, as $n\to\infty$, then
	\[
	d_K(F,N) =   \cO(n^{-{(\kk+1)}/2} p^{-1/2} ),
	\]
where
$N\sim\mathcal{N}(0,1)$ is a standard Gaussian random variable.
\end{theorem}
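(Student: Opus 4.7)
The plan is to apply the simplified Berry-Esseen bound of Theorem \ref{Kol_RAD} (or, equivalently, the discrete second-order Gaussian Poincaré inequality that the paper derives from it) to the normalized statistic $F$. I would encode $I$ as a functional of an infinite Rademacher sequence by associating, to each potential $\kappa$-face $\tau \in \binom{[n]}{\kappa+1}$, an independent Bernoulli$(p)$ random variable $X_\tau$ built from a Rademacher variable in the usual way, and writing
\[
I = \sum_{\sigma \in \binom{[n]}{\kappa}} J_\sigma, \qquad J_\sigma := \prod_{\tau \supset \sigma}(1 - X_\tau),
\]
so that $J_\sigma$ depends only on the $n-\kappa$ potential $\kappa$-faces containing $\sigma$.

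Next, I would pin down the order of $\var(I)$. Whenever $|\sigma \cap \sigma'| \leq \kappa-2$, any common upper $\kappa$-face would need at least $\kappa+2$ vertices, so $J_\sigma$ and $J_{\sigma'}$ depend on disjoint sets of Bernoullis and contribute zero covariance. Only the diagonal and the pairs with $|\sigma\cap\sigma'|=\kappa-1$ remain; for the latter the union of the two containing-sets has $2(n-\kappa)-1$ elements, giving a covariance of order $p(1-p)^{2(n-\kappa)-1}$ and a pair count of order $n^{\kappa+1}$. Since $np = \cO(1)$ keeps $(1-p)^{an}$ bounded below and above by positive constants for $a = \cO(1)$, both regimes $np \to 0$ and $np = \Theta(1)$ yield
\[
\var(I) \asymp n^{\kappa+1} p,
\]
and hence $\sqrt{\var(I)}^{\,-1} \asymp (n^{\kappa+1}p)^{-1/2}$, matching the target rate.

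The discrete derivative $D_\tau I$ has support contained in the $\kappa+1$ faces $\sigma \subset \tau$, and is a sum of at most $\kappa+1$ uniformly bounded terms, each being a product of $(1-X_{\tau'})$ over certain $\tau'$'s above a face $\sigma \subset \tau$. Plugging this into Theorem \ref{Kol_RAD}, the resulting error terms are sums over pairs $(\tau_1,\tau_2)$ and quadruples $(\tau_1,\tau_2,\tau_3,\tau_4)$ in $\binom{[n]}{\kappa+1}$ of expectations of products of several $D_{\tau_i} I$. I would bound each such expectation using the uniform bound $|D_\tau I|\leq \kappa+1$ together with the exponential factor $(1-p)^{m}$ where $m$ counts the size of the union of the containing-sets of the $\sigma_i \subset \tau_i$; again $np = \cO(1)$ keeps all these factors bounded, so each expectation is $\cO(1)$.

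The main obstacle will be the combinatorial bookkeeping: naive counting over $\binom{[n]}{\kappa+1}^k$ gives terms far too large, and one must exploit that the support of each $D_{\tau_i}I$ consists only of the $\kappa+1$ faces $\sigma \subset \tau_i$, so that a nonzero cross-contribution forces the $\tau_i$'s to overlap in restricted patterns (quantified by the sizes $|\tau_i \cap \tau_j|$). I would split the sums by intersection type and, for each type, bound the number of configurations by a power of $n$ and the expectation by a power of $p$. Collecting these contributions, I expect the dominant term to be of order $(n^{\kappa+1}p)^{-1/2}$ and all remaining terms to be of the same or smaller order, yielding the stated bound $d_K(F,N)=\cO(n^{-(\kappa+1)/2}p^{-1/2})$.
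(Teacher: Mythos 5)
Your scaffolding is exactly the paper's: the encoding over the $\binom{n}{\kk+1}$ potential $\kk$-faces, the variance asymptotics $\var(I)\asymp n^{\kk+1}p$, the uniform bound $|I_k^+-I_k^-|\le \kk+1$ on the first-order differences, and the prediction that the dominant contribution is $(n^{\kk+1}p)^{-1/2}$ (it indeed comes from the $B_3$-type term) are all correct. The gap is in how you propose to control the first Malliavin--Stein term. Applying Theorem \ref{Kol_RAD} directly leaves $\E\big[\vert 1-\langle DF,-DL^{-1}F\rangle\vert\big]$, which is \emph{not} a sum of expectations of products of first-order gradients, and the only quantitative tools you name for the error terms --- the uniform bound $|D_\tau I|\le\kk+1$ together with overlap counting of the $\tau_i$'s --- are too weak for it. Concretely, bounding this term by $\sqrt{\var(\langle DF,-DL^{-1}F\rangle)}$, expanding the variance over pairs $(k,\ell)$, using Lemma \ref{lem_2} and the locality that $(D_kF)\,D_kL^{-1}F$ depends only on the $\cO(n)$ faces sharing a common $(\kk-1)$-face with $f_k$ (so only the $\cO(n^{\kk+3})$ pairs with $|f_k\cap f_\ell|\ge\kk-1$ contribute), each covariance being at most of order $(pq)^2/\var(I)^2$, one gets $\cO\big(n^{\kk+3}p^2/(n^{\kk+1}p)^2\big)=\cO(n^{1-\kk})$, whose square root exceeds the target $n^{-(\kk+1)/2}p^{-1/2}$ by the factor $np^{1/2}$; in the main regime $p\asymp 1/n$ this is a loss of order $\sqrt{n}$. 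Note also that no powers of $p$ can be recovered from the expectations themselves: under $np=\cO(1)$ the relevant probabilities $(1-p)^{cn}$ are $\Theta(1)$, so $\E[(D_kF)^4]$ really is of order $(pq)^2/\var(I)^2$.

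The missing ingredient is the \emph{iterated} gradient. The paper works with Theorem \ref{thm:2ndOrderPoincare}(1), i.e.\ the bound \eqref{2nd_R1}, and the two decisive facts are that $D_\ell D_kF$ vanishes unless $f_k$ and $f_\ell$ share a common $(\kk-1)$-face, and that it carries an \emph{extra} factor $pq$, namely $|D_\ell D_kF|\le 2(\kk+1)pq/\sqrt{\var(I)}$ (one $\sqrt{pq}$ per application of $D$). This extra power of $p$, combined with the restricted counting, is exactly what makes $\sqrt{B_1},\sqrt{B_2}=\cO\big(p^{1/2}n^{(1-\kk)/2}\big)=\cO\big((np)^{1/2}n^{-\kk/2}\big)$ and $\sqrt{B_4},\sqrt{B_5}=\cO(n^{-\kk/2})$ admissible, while $\sqrt{B_3}=\cO\big((n^{\kk+1}p)^{-1/2}\big)$ gives the rate. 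A further pitfall: the variants involving the constant $\sum_k p_kq_k$, i.e.\ \eqref{Kol_R2} or \eqref{2nd_R2}, are useless here, since $\sum_k p_kq_k\asymp n^{\kk+1}p\to\infty$ makes that contribution of order $1$; you must use \eqref{2nd_R1} with $B_3,B_4,B_5$ (its hypotheses hold because $F$ is bounded and depends on finitely many Rademacher variables). With the iterated-gradient bound and this choice of inequality added, your plan becomes the paper's proof.
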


\medspace

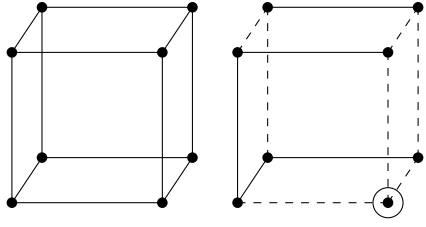
\begin{figure}[t]
	\begin{center}
		\begin{tikzpicture}
			\coordinate (A) at (0,0);
			\coordinate (B) at (2,0);
			\coordinate (C) at (2,2);
			\coordinate (D) at (0,2);
			\coordinate (E) at (0+0.4,0+0.6);
			\coordinate (F) at (2+0.4,0+0.6);
			\coordinate (G) at (2+0.4,2+0.6);
			\coordinate (H) at (0+0.4,2+0.6);
			\fill (A) circle (2pt);
			\fill (B) circle (2pt);
			\fill (C) circle (2pt);
			\fill (D) circle (2pt);
			\fill (E) circle (2pt);
			\fill (F) circle (2pt);
			\fill (G) circle (2pt);
			\fill (H) circle (2pt);
			\draw (A) -- (B);
			\draw (B) -- (C);
			\draw (C) -- (D);
			\draw (D) -- (A);
			\draw (E) -- (F);
			\draw (F) -- (G);
			\draw (G) -- (H);
			\draw (H) -- (E);
			\draw (A) -- (E);
			\draw (B) -- (F);
			\draw (C) -- (G);
			\draw (D) -- (H);
			
			\coordinate (A) at (3,0);
			\coordinate (B) at (2+3,0);
			\coordinate (C) at (2+3,2);
			\coordinate (D) at (0+3,2);
			\coordinate (E) at (0+3+0.4,0+0.6);
			\coordinate (F) at (2+3+0.4,0+0.6);
			\coordinate (G) at (2+3+0.4,2+0.6);
			\coordinate (H) at (0+3+0.4,2+0.6);
			\fill (A) circle (2pt);
			\fill (B) circle (2pt);
			\draw (B) circle (0.2cm);
			\fill (C) circle (2pt);
			\fill (D) circle (2pt);
			\fill (E) circle (2pt);
			\fill (F) circle (2pt);
			\fill (G) circle (2pt);
			\fill (H) circle (2pt);
			\draw [dashed] (A) -- (B);
			\draw [dashed] (B) -- (C);
			\draw (C) -- (D);
			\draw (D) -- (A);
			\draw (E) -- (F);
			\draw [dashed] (F) -- (G);
			\draw (G) -- (H);
			\draw [dashed] (H) -- (E);
			\draw (A) -- (E);
			\draw [dashed] (B) -- (F);
			\draw [dashed] (C) -- (G);
			\draw [dashed] (D) -- (H);
		\end{tikzpicture}
	\end{center}
	\caption{Hamming hypercube (for $n=3$) before and after the edge percolation, which has resulted in one isolated vertex, $4$ vertices of degree $1$, $3$ vertices of degree $2$ and no vertex of degree $3$.}
	\label{fig:hypercube}
\end{figure}

\subsection{Application to hypercube percolation} Let us now consider  another  random graph model for which the underlying graph is the $1$-skeleton of the $n$-dimensional hypercube. More precisely, its vertex set is
$$
\{-1,1\}^n=\{x=(x_1,\ldots,x_n)\in\R^n:|x_i|=1,i=1,\ldots,n\}
$$
and any two vertices $x=(x_1,\ldots,x_n)$,  $y=(y_1,\ldots,y_n)$ are connected, when  there is {exactly one} coordinate $i\in\{1,\ldots,n\}$ such that $x_i\neq y_i$, see Figure \ref{fig:hypercube}. On this Hamming hypercube,  we perform a percolation process in which each edge is removed independently with probability $1-p$ and kept with probability $p\in[0,1]$. The resulting random graph is denoted by $\mathbf{H}(n,p)$ and we emphasize that we allow $p$ to depend on $n$, although this will not be visible in our notation again.

Hypercube percolation has been introduced by Erd\H{o}s and Spencer \cite{ErdosSpencer} and subsequently been studied intensively. In particular, it has been observed that similarly to the Erd\H{o}s-R\'enyi graph $\mathbf{G}(n,p)$,  the random hypercube graph $\mathbf{H}(n,p)$ undergoes phase transitions. For example, for fixed $p$ the probability that $\mathbf{H}(n,p)$ is connected converges to $0$, $1/e$ or $1$, as $n\to\infty$, provided that $p<1/2$, $p=1/2$ or $p>1/2$, respectively. Moreover, from \cite{AtaijKomlosSzemeredi} it is known that if $p=\lambda/n$ with $\lambda>1$ there exists a unique `giant' component having size of order $2^n$, while for $\lambda<1$ the size of the largest component is of lower order. Here, we study the number of vertices of fixed degree $d\in\N_0$ in the random graph ${\bf H}(n,p)$ and we retain the notation $V_d$ for this number. In what follows we will assume that $p\to 0$ or $p\to1$ slower than exponential, meaning that for any $c\in(1,\infty)$
\begin{itemize}
	\item[(i)]$p\to 0$ and $\log p + c n \to\infty$ as $n\to\infty$ or 
	\item[(ii)]  $p\to 1$ and $\log (1-p) + c n \to\infty$ as $n\to\infty$.
\end{itemize}
In particular, this covers the the situation $p=\lambda/n$ at which the phase transition for $\mathbf{H}(n,p)$ takes places as discussed above.

\begin{theorem}\label{thm:IsolatedVerticesHypercube}
	Recall the definition of $F_d$ from  \eqref{def_F_d} and  let $N\sim\mathcal{N}(0,1)$ be a standard Gaussian random variable. Assume that     $p\to 0$ or $p\to 1$ slower than exponential, as $n\to\infty$. Then for fixed $d\in\N_0$,   it holds that  
	  for any $\e\in(0,1)$,
	\[
	d_K(F_d,N) =\cO\big( (2-\e)^{-{n\over 2}} \big)\,.
	\]
\end{theorem}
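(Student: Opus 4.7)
The plan is to apply the simplified second-order Gaussian Poincar\'e inequality proved earlier in this paper to the natural Rademacher functional representation of $V_d$. Encode each edge $e$ of the Hamming hypercube (there are $|E| = n\,2^{n-1}$ of them) by an independent Rademacher variable $\xi_e$ with $\PP(\xi_e=+1)=p$, and decompose
\[
V_d = \sum_{v \in \{-1,+1\}^n} G_v, \qquad G_v = \1(D_v = d),
\]
where each summand $G_v$ is measurable with respect to only the $n$ edges incident to $v$. This locality is the key input to the Poincar\'e-type bound: for $e = \{u,w\}$, the discrete Malliavin derivative $D_e V_d$ is an alternating sum of indicators of $\{D_u^{(-e)} \in \{d-1,d\}\}$ and $\{D_w^{(-e)} \in \{d-1,d\}\}$ (where $D_v^{(-e)}$ is the degree of $v$ in the graph with $e$ removed), hence $|D_e V_d| = \cO(1)$ and $\PP(D_e V_d \neq 0) \leq 2(q_{d-1}+q_d)$ with $q_k := \PP(\textup{Bin}(n-1,p)=k)$. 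Moreover, $D_{e'} D_e V_d$ vanishes unless $e$ and $e'$ share a common vertex, limiting the number of relevant ordered pairs to $\cO(2^n n^2)$.

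The next step is to establish a variance lower bound. Using independence of $G_u, G_v$ for non-adjacent $u,v$, a direct calculation gives
\[
\var(V_d) = 2^n p_d(1-p_d) + n\,2^n p(1-p)(q_d - q_{d-1})^2,
\]
with $p_d := \PP(D_v = d) = (1-p)q_d + p q_{d-1}$. The core claim, to be verified by an elementary case analysis, is that for every $\delta \in (0,1)$ there exist $c_\delta > 0$ and $n_0$ with $\var(V_d) \geq c_\delta (2-\delta)^n$ for all $n \geq n_0$. In the $p \to 0$ branch, one uses $p_d \geq (1-p)^n/n^{\cO(1)}$ to obtain $2^n p_d(1-p_d) \geq c\,(2(1-p))^n$ up to polynomial corrections in $n$; since $2(1-p) \to 2$ and the hypothesis forces $-\log p = o(n)$, the polynomial correction is absorbed to give the required $(2-\delta)^n$ bound. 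The subregimes $np \to 0$, $np = \Theta(1)$ and $np \to \infty$ each require a slight adjustment (in the first, the covariance term contributes an $np$ factor that is sub-exponentially small by the hypothesis), and the $p \to 1$ branch is handled by a symmetric argument.

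Finally, feeding these inputs into the simplified second-order Gaussian Poincar\'e inequality yields a Kolmogorov bound of the schematic form $\var(V_d)^{-1}\sqrt{\Sigma_1} + \var(V_d)^{-3/2}\sqrt{\Sigma_2}$, where $\Sigma_1, \Sigma_2$ are the sums of fourth moments of first- and second-order discrete Malliavin derivatives (plus any mixed terms prescribed by the precise inequality). The derivative estimates above give $\Sigma_1 \leq C n\,2^n (q_{d-1}+q_d)$ and $\Sigma_2 \leq C n^2\,2^n$, with similar polynomial-times-$2^n$ control on mixed quantities. Combined with $\var(V_d) \geq c_\delta (2-\delta)^n$, every contribution is at most $n^{\cO(1)}(2-\delta)^{-n/2}$, and the polynomial factor is absorbed into the exponential slack by choosing $\delta < \e$ and $n$ sufficiently large, yielding the advertised $\cO((2-\e)^{-n/2})$. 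The principal obstacle is the variance lower bound: it must hold uniformly across the full permitted range of $p$, especially in the transitional regime where $p_d$ decays in $n$ but not too rapidly, and this is precisely where the slower-than-exponential hypothesis is indispensable; the rest of the proof is essentially combinatorial bookkeeping.
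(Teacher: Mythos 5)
Your proposal is correct and follows essentially the same route as the paper: you represent $V_d$ as a functional of the $n2^{n-1}$ edge indicator Rademacher variables, use the same locality bounds on the first- and second-order discrete gradients (the second gradient vanishing unless the two edges share a vertex, leaving only $\cO(2^n n^2)$ pairs and $\cO(2^n n^3)$ triples), establish the variance lower bound $\var(V_d)\geq c_\delta (2-\delta)^n$ from the slower-than-exponential hypothesis, and plug everything into the second-order Poincar\'e inequality of Theorem \ref{thm:2ndOrderPoincare}, absorbing polynomial factors into the exponential slack exactly as the paper does. The only cosmetic caveat is that your intermediate estimate $p_d \geq (1-p)^n/n^{\cO(1)}$ is accurate only when $np$ stays bounded below (for $d\geq 1$ with $np\to 0$ one instead uses $2^n p_d \asymp 2^n (np)^d$, still at least $(2-\delta)^n$ under the hypothesis), but the subregime case analysis you announce covers precisely this point.
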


\medspace

Besides the application to random graphs, we would like to point out that our results are  relevant from a more theoretical point of view in the context of the celebrated {fourth moment theorem}. The latter says that a sequence of normalized multiple stochastic integrals of fixed order with respect to a Gaussian process or a Poisson process converges in distribution to a standard Gaussian random variable $N\sim\mathcal{N}(0,1)$, provided that their fourth moments converge to $3$, the fourth moment of $N$. However, for discrete multiple stochastic integrals of order two or higher,  the convergence of the fourth moment alone does not necessarily imply asymptotic normality as shown in \cite{DK19}. Instead, one also has to take into consideration the so-called maximal influence of the corresponding integrands. The new abstract bound for normal approximation we derive allows us to give a considerably simplified proof of the fourth-moment-influence bound in Kolmogorov distance \cite[Theorem 1.1]{DK19} as we explain in detail in Remark \ref{rem_FMT} below.

\medspace

The remaining parts of this paper are structured as follows. In Section \ref{sec:Prelim} we collect some preliminary material, especially including some important elements related to the discrete Malliavin formalism. The anticipated new normal approximation bound for general non-linear functionals of possibly infinite Rademacher random variables  is derived in Section \ref{sec:NormApproxBound}, where we also discuss the application to the fourth moment theorem. The simplified discrete Gaussian second-order Poincar\'e inequality is presented in Section \ref{sec:2ndorderpoincare}. The proof of Theorem \ref{thm:2runs} is presented in Section \ref{sec:2runs}, the proof of Theorem \ref{thm:subgraphs} is carried out in Section \ref{sec:ApplSubgraphs}, the proof of Theorem \ref{thm:VertexDegrees} is the content of Section \ref{sec:ApplERgraph}, Theorem \ref{thm:Isolated faces} is proved in Section \ref{sec:Complexes}, while Theorem \ref{thm:IsolatedVerticesHypercube} is shown in Section \ref{sec:Hypercube}.

\section{Preliminaries}\label{sec:Prelim}

\subsection{Notation}

For two sequences $a=a(n)$ and $b=b(n)$ we write $a\asymp b$ provided that
\[
c<\liminf_{n\to\infty}|a(n)/b(n)|\leq \limsup_{n\to\infty}|a(n)/b(n)|\leq C
\]
for two constants $0<c\leq C<\infty$.  If 
\[
\limsup_{n\to\infty} |a(n)/b(n)|<\infty,
\]
we write  $a=\cO(b)$. It is immediate that $a\asymp b$ if and only if $a= \cO(b)$ and $b= \cO(a)$.

We denote by $\N$ the set of positive integers and write $\N_0 =  \N\cup\{0\} $. The space of real square-summable sequences is a Hilbert space denoted by $\fH$. We say $a = (a(k), k\in\N)\in\fH$ if 
\[
\| a\|^2 := \sum_{k\in\N}   a(k) ^2 <\infty;
\]
the associated  inner product is given by 
\[
\langle a, b \rangle : = \sum_{k\in\N} a(k) b(k), ~ \text{for $a,b\in\fH$.}
\]
Sometimes, we may abuse the above notation and write 
\begin{align}\label{abu_nota}
\langle u, vw \rangle = \sum_{k\in\N} u(k) v(k) w(k),
\end{align}
whenever the above sum is well defined. Here $vw$ stands for the mapping $k\in\N\longmapsto v(k) w(k)$, which shall be distinguished from the tensor product $v\otimes w$. For $p\in\N$, we denote by $\fH^{\otimes p}$ ($\fH^{\odot p}$ respectively) the $p$th tensor product (symmetric tensor product {resp.}) of $\fH$.

\subsection{Discrete Malliavin formalism}

 Let $(p_k, k\in\N)$ be a sequence of real numbers in $(0,1)$ and define  $q_k =1 - p_k$, $k\in\N$.   Suppose $\mathbf{X}=(X_k, k\in\N)$ is a sequence of independent Rademacher random variables with 
 \begin{align*}
\PP(X_k = 1)=p_k\qquad\text{and}\qquad\PP(X_k = -1)=q_k,\qquad k\in\N.
 \end{align*}
Let  
\begin{align}\label{Yk}
\left(Y_k := \frac{ X_k - p_k +q_k}{ 2\sqrt{p_kq_k}} ,\, k\in\N   \right)
\end{align}
 be the normalized sequence.  
 
 \subsubsection{Chaos expansion} The Wiener-It\^o-Walsh  decomposition theorem asserts that the space  $L^2(\Omega, \sigma\{\mathbf{X}\}, \PP )$ with $\sigma\{\mathbf{X}\}$ denoting the $\sigma$-field generated by the random sequences $\bf X$ can be expressed as a direct sum of mutually orthogonal subspaces  (see {e.g.} \cite[Proposition 6.7]{Privault08}):
\begin{align}\label{chaos_d}
L^2(\Omega, \sigma\{\mathbf{X}\}, \PP ) = \bigoplus_{p\geq 0} \C_p,
\end{align}
where $ \C_0 =\R$ and for $p\in\N$, $\C_p$ is called   the $p$th {Rademacher chaos}, which is the collection of square-integrable  $p$-linear polynomials in $\{Y_k, k\in\N\}$. More precisely, 
\[
\C_p = \big\{ J_p(f) : f\in \fH^{\otimes p} \big\},
\]
where for $f\in \fH^{\otimes p}$,  $ J_p(f) $ is called the $p$th {discrete multiple integral} of $f$ and is defined by
\[
 J_p(f) : = \sum_{(i_1, ... , i_p)\in\N^p} f(i_1,  ... , i_p) Y_{i_1} Y_{i_2} \cdots Y_{i_p} \1_{\Delta_p}(i_1, ..., i_p),
 \]
with   $\Delta_p := \{  (i_1, ..., i_p) \in\N^p: i_j \neq i_k~\text{for $j\neq k$} \}$. By definition, we have $J_p(f)  = J_p\big(\widetilde{f} \big) $, where $\widetilde{f}\in\fH^{\odot p} $ stands for the canonical symmetrization
\[
\widetilde{f}(i_1, ... , i_p ) := \frac{1}{p!} \sum_{\sigma\in\mathfrak{S}_p} f(i_{\sigma(1)}, ..., i_{\sigma(p)}),
\]
of $f$, with $\mathfrak{S}_p$ denoting the set of permutations over $\{1, ... , p\}$. 

The orthogonality   of different Rademacher chaoses is captured by the following (modified) isometry relation: 
\begin{align}\label{miso}
 \E\big[ J_p(f) J_q(g) \big] = \1_{\{ p =q\}} p! \langle \widetilde{f}, \widetilde{g} \1_{\Delta_q} \rangle_{\fH^{\otimes p}}~\text{for any $f\in\fH^{\otimes p}$ and any  $g\in\fH^{\otimes q}$}. 
\end{align}
With the notation $\fH^{\odot p}_0 = \big\{ f\in \fH^{\otimes p} :  f = \widetilde{f} \1_{\Delta_p} \big\}$ for $p\in\N$, we can rephrase \eqref{chaos_d} as follows. For any $F\in L^2\big(\Omega, \sigma\{\mathbf{X}\}, \PP \big)$, there is a {unique} sequence of kernels $f_n\in \fH^{\odot n}_0, n\in\N$  such that
\begin{align}\label{exp_F}
F= \E[F] +  \sum_{n =1}^\infty J_n(f_n) \qquad\text{in $L^2(\Omega)$. }
\end{align}

\bigskip

In the sequel, we  introduce some basic discrete Malliavin calculus and refer readers to the survey \cite{Privault08} for further details and background material.

\subsubsection{Discrete Malliavin operators}      For $F\in L^1(\Omega, \sigma\{\mathbf{X}\}, \PP)$, we can write $F = \mathfrak{f}(X_1, X_2,...  )$. We   define 
\[
F^+_k :=  \mathfrak{f}(X_1, ..., X_{k-1}, +1, X_{k+1}, ...  ) \quad{\rm and}\quad F^-_k :=  \mathfrak{f}(X_1, ..., X_{k-1}, -1, X_{k+1}, ...  ).
\]
The {discrete gradient} $D_kF$ of $F$ at $k$th coordinate is a  real-valued    random variable given by 
\[
D_kF := \sqrt{p_kq_k}  \big(F^+_k  - F^-_k \big), \, ~ k\in\N.
\]      
For example, $D_k Y_j = \1_{\{ k=j \}}$. The discrete gradient satisfies the following product formula. For $F, G\in L^2(\Omega, \sigma\{\mathbf{X}\}, \PP)$,
\begin{align}\label{prod}
D_k(FG) = G D_kF + F D_k G - \frac{X_k}{\sqrt{p_kq_k} } (D_kF) (D_k G), \qquad k\in\N;
\end{align}
see {e.g.} \cite[Proposition 7.8]{Privault08} for a proof.

\begin{lemma}\label{lem_0}
{\rm (1)} If $F\in L^\ell(\Omega, \sigma\{\mathbf{X} \}, \PP)$ for some $\ell\in [1, \infty)$, then $D_k F\in L^\ell(\Omega)$ for every $k\in\N$. 

\medskip

{\rm (2)} If $G\in L^\infty(\Omega)$, then
\begin{align}\label{L_inf_bdd}
\| D_kG\| _{L^\infty(\Omega)} \leq \| G\|_{L^\infty(\Omega)}.
\end{align}
If we assume additionally  {\rm (a)} $\sum_{k\in\N} p_k q_k <\infty$ or {\rm (b)} $G$ depends only on  $\{X_1, ... , X_m\}$, then  
\begin{align*}
 \E\big[ \| DG \|^2 \big] = \sum_{k\in\N} \E\big[ (D_kG)^2 \big]   \leq  \begin{cases}
{\displaystyle 4  \| G\|_{L^\infty(\Omega)}^2 \sum_{k\in\N} p_k q_k} &\text{in case {\rm (a) }} \\
& \quad\\
{\displaystyle 4  \| G\|_{L^\infty(\Omega)}^2 \sum_{k=1}^m p_k q_k} &\text{in case {\rm (b)}.} 
 \end{cases}
\end{align*}

\end{lemma}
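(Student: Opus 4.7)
The plan is straightforward and rests on relating the laws of $F_k^+$ and $F_k^-$ to that of $F$ via the independence of $X_k$ from $\{X_j : j\neq k\}$, together with the elementary estimate $\sqrt{p_k q_k}\leq 1/2$.

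For part (1), I would first observe that since $X_k$ is independent of the remaining sequence, the random variable $F_k^+ = \mathfrak{f}(X_1,\ldots,X_{k-1},+1,X_{k+1},\ldots)$ has the same law as $F$ conditioned on $\{X_k=+1\}$. More precisely,
\begin{equation*}
\E\big[|F_k^+|^\ell\big]=\E\big[|F|^\ell \mid X_k=+1\big]=\frac{1}{p_k}\E\big[|F|^\ell\,\mathbf{1}_{\{X_k=+1\}}\big]\leq\frac{1}{p_k}\E\big[|F|^\ell\big],
\end{equation*}
and symmetrically $\E[|F_k^-|^\ell]\leq q_k^{-1}\E[|F|^\ell]$. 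Applying the triangle inequality to $D_kF=\sqrt{p_kq_k}\,(F_k^+-F_k^-)$ then yields
\begin{equation*}
\|D_kF\|_{L^\ell(\Omega)}\leq \sqrt{p_kq_k}\big(\|F_k^+\|_{L^\ell}+\|F_k^-\|_{L^\ell}\big)\leq\Big(\sqrt{q_k/p_k}^{1/\ell}\cdot\sqrt{p_kq_k}+\sqrt{p_k/q_k}^{1/\ell}\cdot\sqrt{p_kq_k}\Big)\|F\|_{L^\ell(\Omega)}<\infty,
\end{equation*}
proving $D_kF\in L^\ell(\Omega)$.

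For the $L^\infty$ bound in part (2), I would simply note that $|F_k^+|\leq \|G\|_{L^\infty}$ and $|F_k^-|\leq \|G\|_{L^\infty}$ almost surely, so $|D_kG|\leq 2\sqrt{p_kq_k}\,\|G\|_{L^\infty}\leq \|G\|_{L^\infty}$, using the key inequality $p_kq_k\leq 1/4$ coming from $p_k+q_k=1$. For the second moment estimate, the same almost-sure bound gives $(D_kG)^2\leq 4p_kq_k\|G\|_{L^\infty}^2$, and summing over $k\in\N$ produces
\begin{equation*}
\E\big[\|DG\|^2\big]=\sum_{k\in\N}\E\big[(D_kG)^2\big]\leq 4\|G\|_{L^\infty(\Omega)}^2\sum_{k\in\N}p_kq_k,
\end{equation*}
which is case (a). For case (b), the crucial observation is that if $G$ depends only on $X_1,\ldots,X_m$, then for any $k>m$ the substitutions $X_k\mapsto \pm 1$ leave $G$ unchanged, so $G_k^+=G_k^-$ and hence $D_kG=0$. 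This truncates the sum at $k=m$ and yields the stated bound.

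I do not anticipate any serious obstacle here: the only subtle point is organizing the conditioning argument in (1) cleanly, and remembering that $\sqrt{p_kq_k}\leq 1/2$ is what absorbs the factor of $2$ in the $L^\infty$ bound so that no multiplicative constant appears in \eqref{L_inf_bdd}.
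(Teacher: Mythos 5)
Your argument is correct and essentially the same as the paper's: part (1) rests on the identity $\E[|F|^\ell]=p_k\E[|F_k^+|^\ell]+q_k\E[|F_k^-|^\ell]$ (which you phrase via conditioning on $X_k$, using the independence of $F_k^\pm$ from $X_k$), and part (2) is the paper's argument verbatim, including the observation that $D_kG=0$ for $k>m$ in case (b). One cosmetic slip: the constant $\big(\sqrt{q_k/p_k}\big)^{1/\ell}\sqrt{p_kq_k}$ in your display does not follow from your own estimate $\|F_k^+\|_{L^\ell}\leq p_k^{-1/\ell}\|F\|_{L^\ell}$ (which gives $p_k^{1/2-1/\ell}q_k^{1/2}$ instead), but since part (1) only asserts finiteness this has no bearing on the conclusion.
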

\begin{proof} For $\ell\in[1,\infty)$,
we can write 
\begin{align*}
	\| F\|_{L^\ell(\Omega)}^\ell = \E\big[  |F|^\ell  \1_{\{ X_k=1\} } + |F|^\ell \1_{\{ X_k=-1\} }\big] &=  \E\big[  \big| F^+_k \big|^\ell  \1_{\{ X_k=1\} } +   \big| F^-_k \big|^\ell\1_{\{ X_k=-1\} }\big] \\
	&= p_k  \E\big[  | F^+_k|^\ell \big] + q_k  \E\big[  | F^-_k |^\ell\big], 
\end{align*}
where the last step follows from the independence between $F^\pm_k$ and $X_k$. Thus, the finiteness of $\| F\|_{L^\ell(\Omega)}^\ell $ implies  $F^+_k, F^-_k\in L^\ell(\Omega)$. 
Therefore, $D_k F = \sqrt{p_k q_k}\big( F^+_k  -  F^-_k \big)\in L^\ell(\Omega)$.

 The  $L^\infty$-case can be proved in the same manner. Suppose that  $G$ is essentially bounded by $M$ for some   $M\in(0,\infty)$, then both $G^+_k$ and $G^-_k$ are bounded by $M$ so that
\begin{align}
| D_kG | =  \sqrt{p_kq_k} \big\vert G^+_k  -  G^-_k \big\vert & \leq 2M   \sqrt{p_kq_k} \label{use_1} \\
&\leq M, \quad\text{using $p_k q_k = p_k (1-p_k) \leq 1/4$}. \notag
 \end{align}
 This shows \eqref{L_inf_bdd}. Under the additional assumption (a), we deduce from \eqref{use_1} that
\[
 \E\big[ \| DG \|^2 \big] = \sum_{k\in\N} \E\big[ (D_kG)^2 \big]  \leq 4M^2 \sum_{k\in \N}p_k q_k.
\]
The other case is  also straightforward.   \qedhere

\end{proof}

Let $\mathbb{D}^{1,2}$ denote  the set of real random variables $F\in  L^2\big(\Omega, \sigma\{\mathbf{X}\}, \PP\big)$ with
\[
\E\big[ \| DF \|^2 \big]  = \E \left( \sum_{k\in\N} (D_kF)^2 \right) < \infty.
\]
The space $\mathbb{D}^{1,2}$ is a Hilbert space under the norm $\| F\|_{1,2} := \sqrt{ \E[ F^2] + \E [ \|DF \|^2] }$ and the set \[ \mathcal{S} := \bigcup_{n\geq 1} \bigoplus_{p=0}^n \C_p\]is a dense subset of $\mathbb{D}^{1,2}$.

The adjoint  operator $\delta$ of $D$ is characterized by the duality relation
\begin{align}\label{duality}
\E\big[ \langle DF, u \rangle \big] = \E\big[  F \delta( u)  \big],
\end{align}
and  its domain $\text{Dom}(\delta)$   consists of   square-integrable $\fH$-valued random variables   $u\in L^2(\Omega;\fH )$ satisfying the following property:
 \begin{center}
 there is some constant $C_u > 0$ such that $\big\vert   \E[ \langle DF, u \rangle  ]  \big\vert \leq C_u  \sqrt{ \E[ F^2]  }$, for all $F\in\mathbb{D}^{1,2}$.
\end{center}
For $F\in L^2(\Omega, \sigma\{ \mathbf{X} \}, \PP)$ having the representation \eqref{exp_F}, it is not difficult to see that
\[
D_k F =  \sum_{n\geq 1} n J_{n-1}(f_n(k,\,\cdot\,) ), 
\]
where $J_0(f_1(k,\,\cdot\,)) = f_1(k)$ and for $n\geq 2$, $J_{n-1}(f_n(k,\,\cdot\,) )$ is the $(n-1)$th discrete multiple integral of $f_n(k, \,\cdot\,)\in\fH^{\odot (n-1)}_0$. Then using the modified isometry property \eqref{miso}, we deduce by comparing $\var(F)$ and $\E[ \| DF \|_\fH^2]$ that 
\begin{align}\label{GPI}
\var(F) \leq  \E[ \| DF \|_\fH^2]
\end{align}
for all $F\in\mathbb{D}^{1,2}$. The above inequality is known as the {Gaussian Poincar\'e inequality} in the Rademacher setting and we note that  the inequality \eqref{GPI} reduces to an equality if and only if $F\in \C_0\oplus \C_1$.

\medskip

Suppose $u = (u_k, k\in\N)\in\text{Dom}(\delta) \subset L^2(\Omega ; \fH)$, then for each $k\in\N$, $u_k\in L^2(\Omega)$ admits the chaos expansion
\begin{align}
u_k = \E[ u_k] + \sum_{n=1}^\infty J_{n}\big( g_{n+1}(k, \,\cdot\,) \big), \label{uk_exp}
\end{align}
where $g_{n+1}(k, \,\cdot\,)\in\fH^{\odot n}$ and $g_{n+1}\in\fH^{\otimes (n+1)}$. If  $F\in\mathcal{S}$ has the form $F = \E[ F] + \sum_{n=1}^m J_n(f_n)$ with $f_n\in\fH^{\odot n}_0$, then
\[
D_kF =  \sum_{n=1}^m n J_{n-1}\big(f_n(k,\,\cdot\,) \big)
\] 
so that 
we deduce from
   \eqref{duality} and \eqref{miso}  that
   \begin{align*}
 \E\big[ \langle DF, u \rangle\big]  =\sum_{k\in\N} \E[  u_k D_kF ] &= \sum_{k\in\N} \sum_{p=1}^m p! \langle  f_p(k, \,\cdot\,), g_p(k, \,\cdot\,) \rangle_{\fH^{\otimes (p-1)}} \\
 & = \sum_{p=1}^m p! \langle f_p, g_p\rangle_{\fH^{\otimes p}} = \E\left[ F \sum_{p=1}^m J_p\big( \widetilde{g}_p\1_{\Delta_p} \big) \right].
   \end{align*} 
It follows that   $u\in\text{Dom}(\delta)$ if and only if $\sum_{p=1}^\infty  p! \big\|  \widetilde{g}_p\1_{\Delta_p}  \big\|^2 <\infty$. In this case,
\[
\delta(u)=  \sum_{p=1}^\infty J_p\big( \widetilde{g}_p\1_{\Delta_p} \big).
\]

Next, we define the Ornstein-Uhlenbeck operator $L$ and its pseudo-inverse $L^{-1}$.  Suppose $F\in L^2(\Omega, \sigma\{ \mathbf{X} \}, \PP)$ has the representation \eqref{exp_F}, we say $F\in\text{Dom}(L)$ if  $\sum_{n\in\N} n^2n! \| f_n \|^2 < \infty$. In this case, we define
\[
LF := \sum_{n=1}^\infty -n J_n(f_n)
\]
and the associated semigroup $(P_t, t\in\R_+)$ is given as
\[
P_tF := \sum_{n=0}^\infty e^{-nt} J_n(f_n).
\]
 Also, for  $F\in L^2(\Omega, \sigma\{ \mathbf{X} \}, \PP)$ having the representation \eqref{exp_F}, we put
 \[
 L^{-1} F :=  \sum_{n=1}^\infty -\frac{1}{n} J_n(f_n)
 \]
so that $LL^{-1} F = F - \E[ F]$ for any $F\in L^2(\Omega, \sigma\{ \mathbf{X} \}, \PP)$. It is not difficult to verify that $F\in\text{Dom}(L)$ if and only if $F\in\mathbb{D}^{1,2}$ and $DF\in\text{Dom}(\delta)$; in this case, we can write $L = - \delta D$.

Finally, let us record a useful result from \cite{KRT17}.

\begin{lemma}\label{lem_2} For $k\in\N$ and $F\in L^2(\Omega, \sigma\{\mathbf{X}\}, \PP)$, we have 
\begin{align}\label{int_form}
-D_k L^{-1} F = \int_0^\infty e^{-t} P_t ( D_k F) dt.
\end{align}
Moreover, if $D_k F\in L^q(\Omega)$ for some $q\in[2,\infty)$, then $\| D_k L^{-1} F \| _{L^q(\Omega)} \leq \| D_k F \| _{L^q(\Omega)}$.
\end{lemma}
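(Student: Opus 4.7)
I would verify the identity \eqref{int_form} by matching chaos expansions, and then deduce the $L^q$-estimate via Minkowski's integral inequality combined with the $L^q$-contractivity of the Ornstein--Uhlenbeck semigroup $P_t$.

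Writing $F = \E[F] + \sum_{n\geq 1} J_n(f_n)$ with $f_n \in \fH^{\odot n}_0$, termwise application of the definitions of $L^{-1}$ and $D_k$ gives
\[
-D_k L^{-1} F = \sum_{n\geq 1} J_{n-1}(f_n(k,\cdot)).
\]
Since $P_t$ acts on the $m$th chaos as multiplication by $e^{-mt}$, one likewise computes
\[
P_t(D_k F) = \sum_{n\geq 1} n\, e^{-(n-1)t}\, J_{n-1}(f_n(k,\cdot)),
\]
and integrating term by term against $e^{-t}\,dt$ recovers the same series, which proves \eqref{int_form}. The exchange of sum and integral is justified by the modified isometry \eqref{miso}, which controls the $L^2$-norm of the tails of these expansions; Fubini--Tonelli together with the $L^2$-boundedness of $D_k$ supplied by Lemma \ref{lem_0}(1) then legitimises the manipulation.

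For the second assertion, Minkowski's integral inequality applied to \eqref{int_form} yields
\[
\|D_k L^{-1} F\|_{L^q(\Omega)} \leq \int_0^\infty e^{-t}\, \|P_t(D_k F)\|_{L^q(\Omega)}\, dt.
\]
The remaining ingredient is that $P_t$ is an $L^q$-contraction for every $q\in[1,\infty]$. This is the standard Mehler representation in the Rademacher setting: writing $F = \mathfrak{f}(X_1,X_2,\ldots)$, one has $P_t F = \E[\mathfrak{f}(Z_1^t, Z_2^t,\ldots) \mid \mathbf{X}]$, where, independently in each coordinate, $Z_k^t = X_k$ with probability $e^{-t}$ and $Z_k^t$ is an independent copy of $X_k$ with probability $1-e^{-t}$. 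Hence $P_t$ is a Markov operator, and in particular $\|P_t(D_k F)\|_{L^q} \leq \|D_k F\|_{L^q}$. Inserting this into the display above gives $\|D_k L^{-1} F\|_{L^q}\leq \|D_k F\|_{L^q}\int_0^\infty e^{-t}\,dt = \|D_k F\|_{L^q}$.

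The only real obstacle is the bookkeeping needed to legitimise the interchange of the infinite sum and the improper integral, since the chaos decomposition converges a priori only in $L^2$. A convenient workaround is to first prove the identity on the dense subset $\mathcal{S}$ of algebraic polynomials (where only finitely many chaoses are present and everything is manifestly algebraic) and then extend by $L^2$-continuity: continuity of the left-hand side in $L^2$ is immediate from the termwise bound $(n-1)! \|f_n(k,\cdot)\|^2 \leq n\cdot n!\, \|f_n(k,\cdot)\|^2 $ provided by \eqref{miso}, while continuity of the right-hand side follows from Minkowski and the $L^2$-contractivity of $P_t$.
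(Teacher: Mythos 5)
Your proposal is correct and follows essentially the same route as the paper: the identity \eqref{int_form} is read off from the chaos expansion (using that $P_t$ multiplies the $(n-1)$st chaos by $e^{-(n-1)t}$ and $\int_0^\infty e^{-nt}\,dt=1/n$), and the $L^q$-bound follows from Minkowski's integral inequality together with the $L^q$-contractivity of $P_t$, which the paper simply quotes from \cite{KRT17} while you justify it via the Mehler-type random-replacement representation. The extra care you take with the sum–integral interchange (density of $\mathcal{S}$ plus $L^2$-continuity) is a fine, if slightly more detailed, version of what the paper leaves implicit.
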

 
 \begin{proof} The integral representation of $-D_k L^{-1}F$ is an easy consequence of  the chaos expansion. Assume now  $D_k F\in L^q(\Omega)$ for some $q\in[2,\infty)$, then by Minkowski's inequality and the integral representation \eqref{int_form}, we have 
 \[
 \big\| D_k L^{-1} F \big\|_{L^q(\Omega)} \leq  \int_0^\infty e^{-t} \big\| P_t ( D_k F) \big\|_{L^q(\Omega)} dt \leq \| D_k F \|_{L^q(\Omega)},
 \]
 where the last step follows from the contraction property of $P_t$. We refer to \cite[Proposition 3.1-3.3]{KRT17} for more details.   \qedhere
 \end{proof}

\section{Normal approximation bounds for Rademacher functionals}\label{sec:NormApproxBound}

Using a discrete version of the Malliavin-Stein technique, a first normal approximation bound for symmetric Rademacher functionals has been obtained in the paper \cite{NourdinPeccatiReinert}, where the error bound was described 
 in terms of a probability metric based on smooth test functions.  A corresponding bound in  the Kolmogorov distance has later been found in \cite{KRT16} and was extended to the non-symmetric setting in \cite{KRT17} and in \cite{DK19}.  We will significantly simplify these bounds applying a monotonicity property of the solutions of Stein's equation in normal approximation. This approach was introduced in \cite{ShaoZhang19} for normal and non-normal approximations for unbounded exchangeable pairs and in \cite{Shao19} for further approaches of Stein's method. For functionals of Poisson random measures, the Malliavin-Stein method has recently been used in \cite{LRPY21} to deduce a simplified Berry-Esseen bound using the same monotonicity argument. We adapt these observations to the Rademacher set-up, which leads to the following result, which is essentially a simplified version of \cite[Theorem 3.1]{KRT16} (in the symmetric case), \cite[Proposition 4.1]{KRT17} and \cite[Proposition 4.2 and Theorem 1.1]{DK19} (in the non-symmetric case). We also point out that by using a suitable chain rule, a normal approximation bound in Wasserstein distance has been obtained in \cite{Zheng17}, whose order is comparable to the Kolmogorov bounds in \cite{KRT16, KRT17}.

Before we can proceed, we need to introduce some further notation. For fixed $z\in\R$, let $f_z$ be the solution to the Stein equation
\begin{equation} \label{SE}
	f'(x) - xf(x) = \1_{\{x\leq z\}} - \Phi(z),\qquad x\in\R,
\end{equation}
with $\Phi(\,\cdot\,)$ denoting the distribution function of a standard Gaussian random variable. The function $f_z$ satisfies the following properties (see Lemma 2.2 and Lemma 2.3 in \cite{CGS11}): 
\begin{itemize}
	\item[(i)]  $| f_z(x)| \leq  \sqrt{2\pi}/4$ for all $x\in\R$;
	\item[(ii)] $f_z(x)$ is continuous on $\R$, infinitely differentiable on $\R\setminus\{z\}$, but not differentiable at $x=z$;
	\item[(iii)] interpreting the derivative of $f_z(x)$  at $x=z$ as $f_z'(z) = z f_z(z) + 1 - \Phi(z)$, one has that $| f_z'(x) | \leq 1$ for all $x\in\R$;
	\item[(iv)] $|x f_z(x) | \leq 1$ for all $x$ and the map $x\longmapsto xf_z(x)$ is non-decreasing.
\end{itemize}	
Following the standard route in Stein's method we have the following bound in Kolmogorov distance. If $F$ has mean zero and variance one, then, {with $N\sim\mathcal{N}(0,1)$,}
\begin{align}\label{def_Kol}
	d_K(F, N) = \sup_{z\in\R} | \PP(F\leq z) - \Phi(z) | \leq \sup_{z\in\R} \Big\vert  \E\big[ F f_z(F) - f_z'(F) \big] \Big\vert.
\end{align}
To be able to deduce bounds simplifying those in \cite{KRT17}, property (iv) as well as the property that the mapping $x\longmapsto {\rm\mathbf{1}}_{\{ x > z \}} $ is non-decreasing will be the basis. Property (iv) was considered in \cite[Lemma 2.2]{CS05} the first time, while already in \cite{CS01} it has been used to prove a non-uniform Berry-Ess\'een bound for sums of independent and not necessarily identically distributed random variables.

 \begin{theorem} \label{Kol_RAD}
{\rm (1)} Let  $F\in\mathbb{D}^{1,2}$ have mean zero and variance one  such that  
	\begin{align}\label{condition_1}
	Ff_z(F) + {\rm\mathbf{1}}_{\{ F>z\}} \in\mathbb{D}^{1,2}\;\;\text{for all } z\in\R.
	\end{align}
	Then with  $N\sim \mathcal{N}(0,1)$, one has that
	 \begin{align}\label{Kol_R0}
		d_K(F, N) \leq    \E \Big[ \big\vert   1  - \langle DF,  - DL^{-1}F\rangle \big\vert \Big] +  \sup_{z \in \R}
		\E \Big[ \big\langle D \big( Ff_z(F) + {\rm\mathbf{1}}_{\{ F>z\}} \big)   \frac{D F}{\sqrt{p q}} ,  | D L^{-1} F | \big\rangle \Big].
		\end{align}
{\rm (2)} Assume in addition that $u\in\text{\rm Dom}(\delta)$, where
	\begin{align}\label{def:u_k}
	u_k:=(p_kq_k)^{-1/2} D_kF| D_kL^{-1}F|, \, k\in\N.
	\end{align}
	Then  for   $N\sim \mathcal{N}(0,1)$ one has that
	\begin{align}
		d_K(F, N) \leq    \E\Big[ \big\vert   1  - \langle DF,  - DL^{-1}F\rangle \big\vert \Big] +  2 \Big\| \delta\big((pq)^{-1/2} DF| DL^{-1}F| \big)  \Big\|_{L^1(\Omega)}.   \label{Kol_R1}
	\end{align}
{\rm (3)} Let $F\in\mathbb{D}^{1,2}$ have mean zero and variance one  such that  
	\begin{align}\label{A1}
		\text{$D_kF\in L^4(\Omega)$ for each $k\in\N$.}  
	\end{align}
	Assume either {\rm (a)} $\sum_{k\in\N} p_k q_k <\infty$ or {\rm (b)} $F$ only depends  on the first $m$ Rademacher random variables.
	Then   for   $N\sim \mathcal{N}(0,1)$ one has that
\begin{align}\label{Kol_R2}
		d_K(F, N) \leq    \E\Big[ \big\vert   1  - \langle DF,  - DL^{-1}F\rangle \big\vert \Big]+  4 \sqrt{\kappa} \left( \sum_{k\in\N} \frac{1}{p_kq_k} \E\big[ (D_k F)^4 \big]   \right)^{1/2},    
	\end{align}	
	where $\kappa = \sum_{k\in\N} p_k q_k$ in case {\rm (a)} and  $\kappa = \sum_{k=1}^m p_k q_k$ in case {\rm (b)}.
\end{theorem}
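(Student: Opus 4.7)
The strategy begins with the Stein-equation bound $d_K(F,N) \leq \sup_z |\E[f_z'(F) - F f_z(F)]|$. Using $F = -\delta DL^{-1}F$ together with the duality \eqref{duality}, I would rewrite $\E[Ff_z(F)] = \E[\langle Df_z(F), -DL^{-1}F\rangle]$. Adding and subtracting $\E[f_z'(F)\langle DF, -DL^{-1}F\rangle]$ inside the expectation then splits the Stein bound into
\[
\E[f_z'(F) - Ff_z(F)] = \E[f_z'(F)(1 - \langle DF, -DL^{-1}F\rangle)] + \E[\langle f_z'(F)DF - Df_z(F),\, -DL^{-1}F\rangle].
\]
The first piece is bounded by $\E|1 - \langle DF, -DL^{-1}F\rangle|$ via property (iii), explaining the first term in all three inequalities.

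The core of Part (1) is the coordinate-wise estimate of the \emph{chain-rule defect} $f_z'(F) D_kF - D_k f_z(F)$. By the fundamental theorem of calculus one has $D_k f_z(F) - f_z'(F) D_kF = \sqrt{p_kq_k}\int_{F_k^-}^{F_k^+}(f_z'(s) - f_z'(F))\,ds$, and substituting Stein's equation \eqref{SE} gives the decomposition
\[
f_z'(s) - f_z'(F) = [sf_z(s) - Ff_z(F)] - [\1_{\{s > z\}} - \1_{\{F > z\}}].
\]
The Shao-Zhang monotonicity observation now enters: by property (iv) and the monotonicity of the indicator, both bracketed terms share the sign of $s - F$ on $[F_k^-, F_k^+]$. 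Applying the elementary $|A - B| \leq |A| + |B|$ to sum of absolute values recombines them into $|h_z(F) - h_z(s)|$, where $h_z(x) := xf_z(x) + \1_{\{x>z\}}$ is non-decreasing and uniformly bounded by $2$ thanks to property (iv). Integration then yields
\[
\Big|\int_{F_k^-}^{F_k^+}(f_z'(s) - f_z'(F))\,ds\Big| \leq |F_k^+ - F_k^-| \,(h_z(F_k^+) - h_z(F_k^-)),
\]
so that $|f_z'(F)D_kF - D_k f_z(F)| \leq |D_kF|\,D_kh_z(F)/\sqrt{p_kq_k}$. Since $D_kh_z(F)\cdot D_kF \geq 0$ by the monotonicity of $h_z$, summing these defects against $|D_kL^{-1}F|$ and applying the triangle inequality produces exactly the second term in \eqref{Kol_R0}.

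Part (2) is immediate from Part (1) via the duality \eqref{duality}: for $u_k = (p_kq_k)^{-1/2}D_kF\,|D_kL^{-1}F|$ one has $\E[\langle Dh_z(F), u\rangle] = \E[h_z(F)\delta(u)]$, and the uniform bound $|h_z(F)| \leq 2$ gives \eqref{Kol_R1}. For Part (3), I would bound $\|\delta(u)\|_{L^1} \leq \|\delta(u)\|_{L^2}$ and invoke the Rademacher $L^2$-variance estimate for $\delta$, which controls $\E[\delta(u)^2]$ by $\E[\|u\|^2]$ plus a discrete-gradient correction. The main term $\E[\|u\|^2]$ is dominated by $\sum_k(p_kq_k)^{-1}\E[(D_kF)^4]$ after Cauchy-Schwarz and the contraction $\|D_kL^{-1}F\|_{L^4}\leq\|D_kF\|_{L^4}$ from Lemma \ref{lem_2}; the correction term is expanded using the product formula \eqref{prod} and the $L^\infty$ bound of Lemma \ref{lem_0}. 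The factor $\sqrt{\kappa} = (\sum_k p_kq_k)^{1/2}$ enters under assumption (a) via summability and under (b) via the finite support of $F$; combined with the constant $2$ coming from Part (2), this delivers the explicit coefficient $4$ in \eqref{Kol_R2}.

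The principal obstacle is the defect estimate of Part (1). The Stein solution $f_z$ has a jump in its derivative at $z$, so a standard second-order Taylor remainder of the form $\|f_z''\|_\infty(b-a)^2$ is unavailable. What rescues the argument is the decomposition of $f_z'$ into a monotone piece $xf_z(x)$ plus a monotone jump $-\1_{\{x>z\}}$: each individual piece is badly behaved at $z$, yet their monotone assembly into $h_z$ controls the defect with the sharp constant $1$, which is what makes \eqref{Kol_R0} strictly cleaner than \cite[Proposition 4.1]{KRT17}. A secondary source of care is the $L^2$ divergence bound used in Part (3), since the Rademacher product rule \eqref{prod} introduces an extra $-X_k(D_k\cdot)(D_k\cdot)/\sqrt{p_kq_k}$ term absent in the Gaussian or Poisson setting; one must check that it does not affect the leading order of the estimate.
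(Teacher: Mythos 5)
Your treatment of parts (1) and (2) is essentially the paper's own argument: the same splitting of the Stein bound, the same rewriting of the chain-rule defect via Stein's equation, and the same use of the monotonicity of $x\mapsto xf_z(x)$ and $x\mapsto\1_{\{x>z\}}$ to recombine the two pieces into the non-decreasing, bounded function $Ff_z(F)+\1_{\{F>z\}}$, followed by the duality step for \eqref{Kol_R1}. (One small repair: your intermediate bound should read $|f_z'(F)D_kF-D_kf_z(F)|\le D_kF\,D_kh_z(F)/\sqrt{p_kq_k}$, i.e.\ the \emph{signed} product, which is non-negative because $D_kF$ and $D_kh_z(F)$ have the same sign; writing $|D_kF|\,D_kh_z(F)$ gives a negative right-hand side when $D_kF<0$. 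This is exactly what the case analysis in the paper's proof, culminating in \eqref{com1}, takes care of.)

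Part (3), however, is off track. You propose to pass through part (2), bound $\|\delta(u)\|_{L^1}\le\|\delta(u)\|_{L^2}$ and then control $\E[\delta(u)^2]$ by a Skorohod-isometry bound. First, part (3) does not assume $u\in\text{Dom}(\delta)$, so the detour through $\delta(u)$ is not available under the stated hypotheses (only $D_kF\in L^4$ plus (a) or (b)). Second, and more decisively, any $L^2$ bound for $\delta\big((pq)^{-1/2}DF|DL^{-1}F|\big)$ of the type you invoke necessarily produces terms involving the iterated gradients $D_\ell D_kF$ through $D_\ell u_k$ and the product rule \eqref{prod}; this is precisely what happens in \eqref{need22}, where the resulting bound is $2\sqrt{B_3}+2\sqrt{6}\sqrt{B_4}+2\sqrt{3}\sqrt{B_5}$ in the notation of Theorem \ref{thm:2ndOrderPoincare}. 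Those second-order terms $B_4,B_5$ are absent from \eqref{Kol_R2} and cannot in general be absorbed into $\sqrt{\kappa}\,\big(\sum_k(p_kq_k)^{-1}\E[(D_kF)^4]\big)^{1/2}$, so your route yields a different (and more complicated) estimate, not the asserted one; your accounting for where the factor $4\sqrt{\kappa}$ comes from (``summability'' or ``finite support'' combined with the $2$ from part (2)) has no actual mechanism behind it. The correct argument stays \emph{before} the duality step: starting from \eqref{need1}, apply the Cauchy--Schwarz inequality to $\sum_k\E\big[|R_k|\,|D_kL^{-1}F|\big]$ directly, bound $\sum_k\E[(D_kG)^2]\le 4\|G\|_{L^\infty(\Omega)}^2\,\kappa\le 16\kappa$ for $G=Ff_z(F)+\1_{\{F>z\}}$ by Lemma \ref{lem_0} (this is where assumptions (a)/(b) and the factor $4\sqrt{\kappa}$ enter), and bound $\E\big[(D_kF)^2|D_kL^{-1}F|^2\big]\le\E[(D_kF)^4]$ by Cauchy--Schwarz together with the contraction of Lemma \ref{lem_2}. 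This gives \eqref{Kol_R2} with the constant $4$ and no second-order gradients.
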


\begin{remark}\label{rem1} 
\begin{itemize}
\item[(1)] The estimates \eqref{Kol_R0} and \eqref{Kol_R1} simplify \cite[Proposition 4.1]{KRT17}, see \cite[Equation (4.9)]{KRT17}.

\item[(2)]  In view of   property  (iv) of the Stein solution $f_z$ {mentioned above} and Lemma \ref{lem_0},  
the condition \eqref{condition_1} is always satisfied if $\sum_{k\in\N} p_k q_k <\infty$ or $F$ depends only on finitely many $X_k$'s. 

\item[(3)] For the random sequence $u$ defined as in \eqref{def:u_k}, a sufficient condition for $u\in\text{Dom}(\delta)$ is that  $F\in L^\infty(\Omega)$ depends only on finitely many $X_k$'s.

\item[(4)] A sufficient condition for \eqref{A1} is that $F\in L^4(\Omega, \sigma\{\mathbf{X} \}, \PP)$; see Lemma \ref{lem_0}.  If  $p_k\to 0$, as $k\to\infty$,  then $\sum_{k\in\N} p_k q_k <\infty$ if and only if $\sum_{k\in\N} p_k <\infty$.
\end{itemize}	
\end{remark}

\begin{proof}[Proof of Theorem \ref{Kol_RAD}] 

	Using the identity $F= LL^{-1}F = -\delta DL^{-1}F$ and  the duality relation \eqref{duality}, we write
	\[
	\E\big[ Ff_z(F) \big] = -  \E\big[ \big( \delta DL^{-1} F \big) f_z(F) \big]  = \E\big[ \langle Df_z(F), -DL^{-1}F \rangle \big],
	\]
	so that
	\begin{align}\label{return1}
		\E\big[ f_z'(F) - F f_z(F) \big] = \E\big[ f_z'(F) - \langle Df_z(F), -DL^{-1}F \rangle \big].
	\end{align}
	By definition of the discrete gradient and the fundamental theorem of calculus, for $k\in\N$ we have 
	\begin{align}
		D_k f_z(F) &=\sqrt{p_kq_k}\Big[  f_z(F^+_k) -  f_z(F^-_k) \Big]  = \sqrt{p_kq_k} \int_0^{F^+_k - F^-_k} f_z'(t + F^-_k) \,dt \notag \\
		&=  \sqrt{p_kq_k} \int_0^{F^+_k - F^-_k} \big[  f_z'(t + F^-_k) - f_z'(F)  \big]  \,dt +  f'_z(F)D_kF := R_k +f'_z(F)D_kF ,\label{def_Rk}
	\end{align}
	where we follow the convention that, for a function $\phi:\R\to\R$,
	\begin{align}\label{rule0}
		\int_0^x \phi(t) \,dt = - \int_0^{-x} \phi(-t)\,dt \quad {\rm for}~ x<0.
	\end{align}
	Then, using the Stein's equation $f'_z(x) = x f_z(x) + \1_{\{ x\leq z \}} - \Phi(z)$, we get 
	\begin{align}\label{TRICK}
	f_z'(t + F^-_k) - f_z'(F) = (t+ F^-_k ) f_z( t+ F^-_k ) - Ff_z(F) +  \1_{\{ t+ F^-_k \leq z \}} - \1_{\{ F\leq z\}}.
	\end{align}
	Note that when $F^+_k = F^-_k$ or equivalently $D_kF = 0$,  we have  $D_k f_z(F)=R_k = 0$. Now, we distinguish the cases $F^+_k > F^-_k$ (Case 1) and $F^+_k < F^-_k$ (Case 2).
	
	\medspace
	
	\paragraph{\textbf{Case 1}} If $F^+_k > F^-_k$ or equivalently $D_kF > 0$, we have $F^+_k \geq F\geq F^-_k$. 
	Then using the monotonicity of the function $x\longmapsto xf_z(x)$ on $\R$, recall property (iv) of the Stein solution $f_z$, we get for $0\leq t \leq F^+_k - F^-_k$ that
	\begin{align*}
	\begin{cases}
			(t+ F^-_k ) f_z( t+ F^-_k ) - Ff_z(F) &\leq\quad   F^+_k  f_z(  F^+_k ) - Ff_z(F) \in \R_+ \\
			\quad\\
			Ff_z(F) - ( t+ F^-_k ) f_z( t+ F^-_k ) &\leq\quad    Ff_z(F) -  F^-_k  f_z(F^-_k )   \in \R_+.
		\end{cases}
	\end{align*}  
	Combining these two inequalities yields
	\[
	| (t+ F^-_k ) f_z( t+ F^-_k ) - Ff_z(F) | \leq  \frac{1}{\sqrt{p_kq_k}}  D_k \big( Ff_z(F) \big).
	\]
	Note that the function $x\longmapsto \1_{\{ x> z \}}$ is also non-decreasing on $\R$, so that the same arguments lead to 
	\[
	\big\vert   \1_{\{ t+ F^-_k \leq z \}} - \1_{\{ F\leq z\}} \big\vert  = \big\vert   \1_{\{ t+ F^-_k > z \}} - \1_{\{ F> z\}} \big\vert \leq \frac{  D_k\big(  \1_{\{ F> z\}}  \big)}{   \sqrt{p_kq_k}  }
	\]
	for $0\leq t \leq F^+_k - F^-_k$.
	Therefore,  recalling the definition of $R_k$ in \eqref{def_Rk}, we have that
	\begin{align}\label{com1}
		\left\vert  R_k   \right\vert  \leq   \frac{1}{\sqrt{p_kq_k}}  D_k \big( Ff_z(F) + \1_{\{ F>z\}} \big)D_kF.
	\end{align}

	\paragraph{\textbf{Case 2}} If $F^+_k < F^-_k$ or equivalently $D_kF <  0$, we have $F^+_k \leq F\leq  F^-_k$.  Taking into account our  convention \eqref{rule0}, we write
	\begin{align*}
		R_k &= \sqrt{p_kq_k} \int_0^{ F^-_k-F^+_k}  \big[ f_z'(F) -  f_z'(F^-_k -t) \big] \,dt,
	\end{align*}
	and by the same arguments as in \textbf{Case 1}, we have
	\[
	\big\vert  f_z'(F) -  f_z'(F^-_k -t) \big\vert \leq -\frac{1}{\sqrt{p_kq_k}} D_k\big( Ff_z(F) + \1_{\{ F>z\}} \big)
	\]
	so that the estimate \eqref{com1} holds in this case as well.
	
	\medspace 
	
	Thus, by combining the above discussions with  \eqref{return1}, we get 
	\begin{align*}
		\E\big[ f_z'(F) - F f_z(F) \big] &= \E\big[ f_z'(F) - f_z'(F) \langle DF, -DL^{-1}F  \rangle \big] +  \E\big[   \langle R, -DL^{-1}F  \rangle \big].
	\end{align*}
	which is bounded by 
	\begin{align*}
 \E\Big[  \big\vert 1 -  \langle DF, -DL^{-1}F  \rangle \big\vert \Big] +  \sum_{k\in\N} \E\Big[  | R_k| \times  | D_kL^{-1} F | \Big].
	\end{align*}
  Also, by \eqref{com1}, we get
	\begin{align}\label{need1}
		\sum_{k\in\N} \E\Big[  | R_k| \times  | D_kL^{-1} F | \Big] \leq  \sum_{k\in\N} \E\Big[ D_k \big( Ff_z(F) + \1_{\{ F>z\}} \big)   \frac{D_kF}{\sqrt{p_kq_k}}        | D_kL^{-1} F | \Big].
	\end{align}
	Note that the above sum is finite, since $D_k \big( Ff_z(F) + \1_{\{ F>z\}} \big)/ \sqrt{p_kq_k} $ is bounded by $2$ and 
	\[
	\sum_{k\in\N} \E\Big[  | D_kF |     | D_kL^{-1} F | \Big] \leq \sum_{k\in\N} \E\big[  | D_kF |^2 \big]   <\infty 
	\]
	by  Lemma \ref{lem_2}.  Part (1) is thus proved.
	
	\medskip
	
	To prove (2), putting    
	\begin{align}\label{def_G_u_k}
		G=  Ff_z(F) + \1_{\{ F>z\}} \quad{\rm and}\quad u_k =   \dfrac{D_kF}{\sqrt{p_kq_k}}    | D_kL^{-1} F | \,\, \text{for $k\in\N$,}
\end{align}
 we have the following facts:
	\begin{itemize}
		\item[(i)]  $G$ is uniformly bounded by $2$ and  $ (D_k G) u_k \geq 0 $;
		\item[(ii)]
		$u_k\in L^2(\Omega)$ for all $k\in\N$, since ${\displaystyle
\E\big[ \| u\|_{\mathfrak{H}}^2\big] =	\E \sum_{k\in\N} u_k^2 <\infty.
	}$
	\end{itemize}
 
By assumptions in part (1) and part (2),   $Ff_z(F) + \1_{\{ F>z\}}\in\mathbb{D}^{1,2}$ and    $u\in\text{Dom}(\delta)$, then we deduce from     the duality relation  \eqref{duality} that 
	\begin{align*}
	\sum_{k\in\N} \E\Big[ (D_k G) u_k \Big] &= {\E\Big[ \langle DG, u \rangle_{\mathfrak{H}} \Big]  }  \quad \text{using fact (i)}\\
	&= \E\Big[  G \delta\big(  (pq)^{-1/2} DF | DL^{-1}F| \big) \Big] \leq 2 \big\| \delta\big(  (pq)^{-1/2} DF | DL^{-1}F| \big) \big\|_{L^1(\Omega)}.
	\end{align*}
	Thus,  we obtain  \eqref{Kol_R1} from the Stein bound \eqref{def_Kol}.

	\medskip
	
	Now let us consider part (3).  Following the estimate in \eqref{need1} and using the Cauchy-Schwarz inequality, we  get, with $G = Ff_z(F) + \1_{\{ F>z\}}$ as above,
\begin{align*}
		\sum_{k\in\N} \E\Big[  | R_k| \times  | D_kL^{-1} F | \Big] &\leq  \left( \sum_{k\in\N} \E\big[ (D_k G)^2 \big]  \right)^{1/2}  \left( \sum_{k\in\N}  \frac{1}{p_kq_k} \E\Big[  (D_kF)^2       | D_kL^{-1} F |^2 \Big] \right)^{1/2} \\
		&\leq  \left( 16 \kappa \right)^{1/2}\left( \sum_{k\in\N}  \frac{1}{p_kq_k} \E\big[  (D_kF)^4      \big] \right)^{1/2},
		\end{align*}	
		where the last step follows from Lemma \ref{lem_0}, Lemma \ref{lem_2} and the fact that $G$ is uniformly bounded by $2$. Then the   bound \eqref{Kol_R2} follows from  the Stein bound \eqref{def_Kol}. 
		This concludes the proof. \qedhere

\end{proof}

\begin{remark}
The second derivatives of the solutions of Stein's equations \eqref{SE} are unbounded. Therefore, to obtain good bounds for the Kolmogorov distance,
increments like $f_z'(u) - f_z'(v)$ should not be represented in terms of second derivatives by the mean value theorem. The idea
in \eqref{TRICK} is alternatively to rewrite these increments using Stein's equation. As a consequence, one has to deal with terms like $ (w+u) f_z(w+u) - (w+v) f_z(w+v)$.
Applying the property that $x\mapsto xf_z(x)$ is non-decreasing one can show the bound
$$
 | (w+u) f_z(w+u) - (w+v) f_z(w+v) | \leq \Big( |w| + \frac{\sqrt{2 \pi}}{4} \Big) (|u| + |v|).
$$
Alternatively, a direct application of the property that $x \mapsto xf_z(x)$ is non-decreasing is the key to the simplification of our bounds.
\end{remark}

In \cite[Proposition 4.2]{DK19} bounds for normal approximation of non-linear functionals of an infinite Rademacher sequence in the Kolmogorov metric  are presented in terms of an operator $\Gamma_0$, which coincides with the so-called carr\'e-du-champ operator $\Gamma$ for functionals in $L^4(\Omega)$ having a finite chaotic decomposition (see \cite[Proposition 2.7]{DK19}). We will now establish a simplified version of this result. For this, we define for $F,G  \in\mathbb{D}^{1,2}$,
\begin{align}\label{Gamma0}
	\Gamma_0(F,G) := \frac 12 \sum_{k=1}^{\infty} (D_k F)(D_k G) +  \frac 12 \sum_{k=1}^{\infty} (D_k F)(D_k G) Y_k^2
\end{align}
with $Y_k$ as in \eqref{Yk}.

\begin{theorem} \label{Kol_RAD2}
	Let  $F\in\mathbb{D}^{1,2}$ have mean zero and variance one  such that  
	\begin{align}
		Ff_z(F) + {\rm\mathbf{1}}_{\{ F>z\}} \in\mathbb{D}^{1,2}\;\;\text{for all } z\in\R.
	\end{align}
	Then with $N\sim \mathcal{N}(0,1)$ and $\Gamma_0$ defined as in \eqref{Gamma0},  one has that
		\begin{align}
			d_K(F, N) \leq    \E \Big[ \big\vert   1  - \Gamma_0(F,  - L^{-1}F) \big\vert \Big] +  \sup_{z \in \R}
			\E \Big[ \big\langle D \big( Ff_z(F) + \1_{\{ F>z\}} \big)   \frac{D F}{\sqrt{p q}} ,  | D L^{-1} F | \big\rangle \Big].
	\end{align}
\end{theorem}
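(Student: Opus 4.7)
The plan is to follow the proof of Theorem~\ref{Kol_RAD}(1), but to replace its Taylor-type decomposition $D_kf_z(F) = f_z'(F)D_kF + R_k$ by a \emph{midpoint} decomposition
\[
D_kf_z(F) \;=\; \sqrt{p_kq_k}\!\int_{F_k^-}^{F_k^+} f_z'(u)\,du \;=\; D_kF\cdot\frac{f_z'(F_k^+)+f_z'(F_k^-)}{2} + \wt R_k,
\]
where $\wt R_k$ collects the fluctuations of $f_z'$ around the endpoint-average. As before, the Stein bound \eqref{def_Kol} together with $F = -\delta DL^{-1}F$ and the duality relation \eqref{duality} gives $\E[f_z'(F)-Ff_z(F)] = \E[f_z'(F)] - \sum_{k\in\N}\E[(D_kf_z(F))(-D_kL^{-1}F)]$; the midpoint decomposition then splits this into a main piece producing $\Gamma_0(F,-L^{-1}F)$ and a remainder piece producing the error term.

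For the main piece, let $\mathcal{G}_k := \sigma\{X_j : j\neq k\}$, so that both $D_kF$ and $D_kL^{-1}F$ are $\mathcal{G}_k$-measurable. Since $Y_k^2 = q_k/p_k$ on $\{X_k=1\}$ and $Y_k^2 = p_k/q_k$ on $\{X_k=-1\}$, a direct case-by-case computation yields the key conditional identity
\[
\E\!\left[\frac{1+Y_k^2}{2}f_z'(F)\,\Big|\,\mathcal{G}_k\right] \;=\; \frac{f_z'(F_k^+)+f_z'(F_k^-)}{2}.
\]
Writing $A_k := (D_kF)(-D_kL^{-1}F)$, multiplying the midpoint contribution by $A_k$, taking expectation, and summing in $k$ gives $\sum_{k\in\N}\E[A_k \tfrac{f_z'(F_k^+)+f_z'(F_k^-)}{2}] = \sum_{k\in\N}\E[A_k\tfrac{1+Y_k^2}{2}f_z'(F)] = \E[f_z'(F)\,\Gamma_0(F,-L^{-1}F)]$. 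Combined with $|f_z'|\le 1$, this produces the $\E\big[|1-\Gamma_0(F,-L^{-1}F)|\big]$ term in the bound.

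For the remainder, writing $f_z'(u) = uf_z(u)+\1_{\{u\le z\}}-\Phi(z)$ and exploiting the monotonicity of both $u\mapsto uf_z(u)$ (property~(iv) of $f_z$) and $u\mapsto \1_{\{u>z\}}$, one checks that for $u$ between $F_k^-$ and $F_k^+$ both $uf_z(u)$ and $\1_{\{u\le z\}}$ stay within half-widths $D_k(Ff_z(F))/(2\sqrt{p_kq_k})$ and $D_k\1_{\{F>z\}}/(2\sqrt{p_kq_k})$ of their endpoint-midpoints. Adding these two bounds gives
\[
\left|f_z'(u)-\frac{f_z'(F_k^+)+f_z'(F_k^-)}{2}\right| \;\le\; \frac{D_k(Ff_z(F)+\1_{\{F>z\}})}{2\sqrt{p_kq_k}},
\]
hence $|\wt R_k|\le \tfrac{|D_kF|}{2\sqrt{p_kq_k}}D_k(Ff_z(F)+\1_{\{F>z\}})$. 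By the sign-coupling from the proof of Theorem~\ref{Kol_RAD}(1) (both $D_kF$ and $D_k(Ff_z(F)+\1_{\{F>z\}})$ share the sign of $F_k^+-F_k^-$), this yields $\sum_{k\in\N}\E[|\wt R_k||D_kL^{-1}F|] \le \tfrac{1}{2}\E[\langle D(Ff_z(F)+\1_{\{F>z\}})\tfrac{DF}{\sqrt{pq}},|DL^{-1}F|\rangle]$, which is dominated by the error term in the statement. The main obstacle is precisely this half-width estimate: the midpoint formulation halves the fluctuation bound one would obtain from applying the triangle inequality at both endpoints, which is exactly what keeps the error term on par with Theorem~\ref{Kol_RAD}(1) despite replacing $\langle DF,-DL^{-1}F\rangle$ by $\Gamma_0(F,-L^{-1}F)$ via the conditional identity above.
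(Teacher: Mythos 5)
Your proof is correct, but it takes a genuinely different route from the paper's. The paper proves Theorem \ref{Kol_RAD2} essentially by citation: it imports from the proof of \cite[Proposition 4.2]{DK19} the identity $\PP(F\le z)-\Phi(z)=\E[f_z'(F)]-\E[\Gamma_0(f_z(F),-L^{-1}F)]$, then expands $\Gamma_0$ using the same remainder $R_k=D_kf_z(F)-f_z'(F)D_kF$ as in Theorem \ref{Kol_RAD} (the weight $\tfrac{1+Y_k^2}{2}$ is removed by independence of $X_k$ from the $X_k$-free bound on $|R_k|\,|D_kL^{-1}F|$), and concludes as in part (1) of Theorem \ref{Kol_RAD}. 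You instead stay self-contained: starting from the duality step of Theorem \ref{Kol_RAD}(1), your conditional identity $\E\big[\tfrac{1+Y_k^2}{2}f_z'(F)\mid\sigma\{X_j:j\ne k\}\big]=\tfrac12\big(f_z'(F_k^+)+f_z'(F_k^-)\big)$ is correct (on $\{X_k=1\}$ one has $\tfrac{1+Y_k^2}{2}=\tfrac{1}{2p_k}$, on $\{X_k=-1\}$ it equals $\tfrac{1}{2q_k}$), and since $(D_kF)(-D_kL^{-1}F)$ is measurable with respect to $\sigma\{X_j:j\ne k\}$, the endpoint-average term indeed becomes $\E[f_z'(F)\,\Gamma_0(F,-L^{-1}F)]$; in effect you re-derive the integration-by-parts mechanism of \cite{DK19} rather than quoting it. Your midpoint remainder $\wt R_k$ is then handled by the same monotonicity trick as in the paper, and the half-width argument even yields the error term with an extra factor $\tfrac12$, so the stated bound follows a fortiori. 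Two small points to tidy up: the displayed half-width inequality should read $\big|f_z'(u)-\tfrac12(f_z'(F_k^+)+f_z'(F_k^-))\big|\le \tfrac{1}{2\sqrt{p_kq_k}}\big|D_k\big(Ff_z(F)+\1_{\{F>z\}}\big)\big|$ (or be split into the two sign cases, as in Case 2 of the proof of Theorem \ref{Kol_RAD}); your sign-coupling remark then turns $|D_kF|\,|D_k(\cdot)|$ into the signless product appearing in the theorem. And the interchange of $\sum_k$ and $\E$ producing $\Gamma_0(F,-L^{-1}F)$ deserves one line: $\sum_k\E\big[|D_kF\,D_kL^{-1}F|\tfrac{1+Y_k^2}{2}\big]=\sum_k\E\big[|D_kF\,D_kL^{-1}F|\big]<\infty$ by independence of $Y_k^2$ from the gradients, Lemma \ref{lem_2} and $F\in\mathbb{D}^{1,2}$. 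In terms of trade-offs, the paper's argument is shorter but not self-contained, while yours makes transparent exactly why $\Gamma_0(F,-L^{-1}F)$ replaces $\langle DF,-DL^{-1}F\rangle$ and slightly sharpens the constant in the second term.
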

\begin{proof}
In the proof of \cite[Proposition 4.2]{DK19} we can find that $\PP (F \leq z) - \Phi(z) = \E \big[ f_z'(F) \big] - \E \big[ \Gamma_0 (f_z(F), - L^{-1} F) \big]$ for all $z\in\R$.
With the definition of $\Gamma_0$ we obtain
\begin{equation}
| \PP (F \leq z) - \Phi(z) | \leq  \E \Big[ \big\vert   1  - \Gamma_0(F,  - L^{-1}F) \big\vert \Big]  \ + 
\sum_{k \in \N} \E \Big[ \big\vert R_k \big\vert \times \big\vert D_k L^{-1} F   \big\vert \Big];
\end{equation}
for the arguments see the calculations which lead to \cite[Equation (100)]{DK19} with $R_k := D_k f_z(F) - f_z'(F) D_kF$, for every $k \in \N$.
We also used the independence of $X_k$ and $|R_k| |D_k L^{-1} F|$ to get this inequality starting from \cite[Equation (100)]{DK19}.
From this point on the result follows as in the proof of part (1) of Theorem  \ref{Kol_RAD}.
\end{proof}

\begin{remark}\label{rem_FMT} 
Now, let us  present a simple path that leads to the fourth-moment-influence bound in Kolmogorov distance \cite[Theorem 1.1]{DK19} already mentioned in the introduction. For the bound in Wasserstein distance, we refer interested readers to \cite{Zheng19} for a simple proof using exchangeable pairs. 
Suppose  that $F = J_m(f)\in L^4(\Omega)$ for some $f\in\fH^{\odot m}_0$ and $m\in\N$ such that $\E[ F^2]=1$, then it has been pointed out in the proof of  \cite[ Lemma 3.7]{DK19} that the random sequence $u = (u_k)_{k\in\N}$, defined as in \eqref{def_G_u_k}, satisfies condition (2.14) in \cite{KRT17}, which implies $u\in\text{Dom}(\delta)$. Moreover, Equation (2.15) in \cite{KRT17} holds and in our notation it reads as follows:
\[\E\big[ \langle DG, u \rangle \big] = \E\big[ G \delta(u) \big] = \E\Big[ \big(Ff_z(F) + \1_{\{  F>z\} } \big) \delta(u) \Big],
\]
where $G$ and $u$ are defined as in  \eqref{def_G_u_k}. It follows that 
\begin{align}\label{need2}
		\sum_{k\in\N} \E\Big[  | R_k| \times  | D_kL^{-1} F | \Big] \leq  2 \E\big[ |   \delta(u) | \big] \leq  2 \big\| \delta\big((pq)^{-1/2}DF | DL^{-1} F | \big) \big\|_{L^2(\Omega)}.
\end{align}
It is also proved in  \cite[Lemma 3.7]{DK19} that 
\begin{align}
\big\| \delta\big((pq)^{-1/2}DF | DF | \big) \big\|_{L^2(\Omega)}^2 \leq  (8m^2 - 7) \Bigg(  (4m-3) \big( \E[ F^4] -3\big) + (6m-3)\gamma_m \mathcal{M}(f) \Bigg), \label{need11}
\end{align}
where $\gamma_m: = 2(2m-1)! \sum_{r=1}^m r! \binom{m}{r}^2$ and $\mathcal{M}(f)$, the maximal influence of $f$, is defined by 
\[
\mathcal{M}(f) := \sup_{k\in \N}  \sum_{1\leq i_2 < ... < i_m<\infty} f^2(k, i_2, ... , i_m).
\]
With $F=J_m(F)$ and $-L^{-1}F = m^{-1} F$, we obtain  
\[
\E \Big[ \big\vert   1  - \Gamma_0(F,  - L^{-1}F) \big\vert \Big] \leq 
\sqrt{ \var\big( m^{-1} \Gamma_0(F, F) \big)  }.
\]
Lemma 3.5 in \cite{DK19} tells us that
\[
\var\big(m^{-1} \Gamma_0(F,F) \big) \leq \frac{(2m-1)^2}{4m^2} \left(  \E[ F^4] - 3 + \gamma_m \mathcal{M}(f) \right), 
\]
which, together with \eqref{need11}, implies the bound
\begin{align*}
d_K(F, N) \leq & \frac{2m-1 + 4\sqrt{(8m^2-7)(4m-3)  }}{2m} \sqrt{\big\vert \E[F^4] - 3 \big\vert} \\
&\qquad + \frac{2m-1 + 4\sqrt{(8m^2-7)(6m-3)\gamma_m  }}{2m} \sqrt{ \mathcal{M}(f)},
\end{align*}
which, up to the numerical constants only depending on $m$, coincides with that in \cite[Theorem 1.1]{DK19}.
\end{remark}

\section{A discrete  second-order Gaussian Poincar\'e inequality}\label{sec:2ndorderpoincare}

While the discrete gradient admits a natural interpretation as a difference operator and is thus easy to handle, this is not the case for the other discrete Malliavin operators such as $\delta$, $L$ or $L^{-1}$. It is thus desirable to have a bound for $d_K(F,N)$ just in terms of the discrete gradient and its iterate. A result of this type is known as a  second-order Gaussian Poincar\'e inequality and has first been established for functionals of Gaussian random variables in \cite{Chatterjee2ndorder}. This has later been extended to more general
functionals of Gaussian random fields \cite{NourdinPeccatiReinertPoincare} and also to the Poisson framework \cite{LRPY21,LPS} and {the} Rademacher setting \cite{NourdinPeccatiReinert}, the latter using smooth probability metrics. For the Kolmogorov distance, a discrete Gaussian second-order Poincar\'e inequality was derived in \cite{KRT17}. The next theorem is a simplified version of the main result in \cite{KRT17}, which removes several superfluous terms.

\begin{theorem}\label{thm:2ndOrderPoincare}
Let  $F\in\mathbb{D}^{1,2}$ have mean zero and variance one, {and}  define
\begin{align*}
	B_1:&=  \sum_{j,k,\ell\in\N} \sqrt{  \E\big[ (D_jF)^2 (D_kF)^2 \big] }  \sqrt{  \E\big[ (D_\ell D_jF)^2 (D_\ell D_kF)^2 \big] } \\
	B_2:&=\sum_{j,k,\ell\in\N} \frac{1}{p_\ell q_\ell}   \E\big[ (D_\ell D_jF)^2 (D_\ell D_kF)^2 \big]  \qquad\qquad   B_3:= \sum_{k\in\N} \frac{1}{p_kq_k} \E\big[ (D_kF)^4 \big]  \\
	B_4:&=\sum_{k,\ell \in\N} \frac{1}{p_kq_k}  \sqrt{ \E\big[ (D_kF)^4 \big]  } \sqrt{  \E\big[   (D_\ell D_kF)^4 \big]    }\qquad  B_5:= \sum_{k,\ell \in\N} \frac{1}{p_kq_k p_\ell q_\ell}  \E\big[   (D_\ell D_kF)^4 \big].
	\end{align*}
Then the following statements hold true.
\begin{itemize}
\item[(1)] If  condition \eqref{condition_1} is satisfied and $(pq)^{-1/2} DF | DL^{-1}F| \in\text{\rm Dom}(\delta)$, then with $N\sim \mathcal{N}(0,1)$,
\begin{align}\label{2nd_R1}
	d_{K}(F, N)  \leq \frac{\sqrt{15}}{2} \sqrt{ B_1} + \frac{\sqrt{3}}{2} \sqrt{B_2} + 2 \sqrt{B_3} + 2 \sqrt{6} \sqrt{B_4} + 2\sqrt{3} \sqrt{B_5}.
\end{align}
\item[(2)] If  condition \eqref{A1} is satisfied and we assume either   {\rm (a)} $\sum_{k\in\N} p_k q_k <\infty$ or {\rm (b)} $F$ depends only on first $m$ Rademacher random variables, then with $N\sim \mathcal{N}(0,1)$,
\begin{align}\label{2nd_R2}
	d_{K}(F, N)  \leq \frac{\sqrt{15}}{2} \sqrt{ B_1} + \frac{\sqrt{3}}{2} \sqrt{B_2} + 4\sqrt{\kappa} \sqrt{B_3},
	\end{align}
	where $\kappa =\sum_{k\in\N} p_k q_k$ in case {\rm (a)} and  $\kappa =\sum_{k=1}^m p_k q_k$ in case {\rm (b)}. 
\end{itemize}

\end{theorem}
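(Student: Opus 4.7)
Both parts of Theorem \ref{thm:2ndOrderPoincare} are to be deduced from Theorem \ref{Kol_RAD}, following the scheme of \cite{KRT17} but with the sharper starting inequalities now at our disposal. For part (2), Theorem \ref{Kol_RAD}(3) applies directly and already contributes the summand $4\sqrt{\kappa}\sqrt{B_3}$; for part (1), Theorem \ref{Kol_RAD}(2) applies and additionally requires an $L^1$-bound on $\delta\bigl((pq)^{-1/2}DF\,|DL^{-1}F|\bigr)$. In both cases the same first summand must be estimated: using $F=-\delta DL^{-1}F$, the duality relation \eqref{duality}, and $\var(F)=1$ we obtain $\E\langle DF,-DL^{-1}F\rangle=\E[F^2]=1$, so Jensen's inequality yields
\[
\E\bigl[\,\bigl|\,1-\langle DF,-DL^{-1}F\rangle\bigr|\,\bigr] \leq \sqrt{\var\bigl(\langle DF,-DL^{-1}F\rangle\bigr)}.
\]

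To bound the variance on the right-hand side, write $a_k:=D_kF$, $b_k:=-D_kL^{-1}F$, apply the discrete Gaussian Poincar\'e inequality \eqref{GPI} to $T:=\sum_k a_kb_k$, and invoke the product formula \eqref{prod} to split
\[
D_\ell(a_kb_k) = b_k(D_\ell a_k) + a_k(D_\ell b_k) - \frac{X_\ell}{\sqrt{p_\ell q_\ell}}(D_\ell a_k)(D_\ell b_k).
\]
Summing over $k$, squaring, taking expectation, and applying Cauchy--Schwarz term by term, the key auxiliary ingredient is the contraction-in-mean estimate
\[
\E\bigl[(D_jL^{-1}F)^2(D_kL^{-1}F)^2\bigr] \leq \E\bigl[(D_jF)^2(D_kF)^2\bigr]
\]
together with its analogue at the level of second gradients, both of which follow from the integral representation \eqref{int_form} combined with the Jensen bound $(P_tX)^2\leq P_t(X^2)$ for the Markov semigroup $P_t$ and the $L^q$-contraction property recorded in Lemma \ref{lem_2}. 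The first two pieces of $D_\ell(a_kb_k)$ then contribute $B_1$-type terms, while the third, carrying the factor $(p_\ell q_\ell)^{-1/2}$, contributes a $B_2$-type term; after a careful weighted Young splitting of $(\alpha+\beta+\gamma)^2$ and an application of $\sqrt{x+y}\leq\sqrt{x}+\sqrt{y}$ one arrives at the constants $\tfrac{\sqrt{15}}{2}$ and $\tfrac{\sqrt{3}}{2}$.

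For part (1), the remaining Skorokhod integral is controlled via $\|\delta(u)\|_{L^1(\Omega)}\leq\|\delta(u)\|_{L^2(\Omega)}$ and the second-moment identity for $\delta$ on Rademacher space (which pairs $\E\|u\|^2$ with a trace term involving $(D_ju_k)(D_ku_j)$); applying the product formula \eqref{prod} to $u_k=(p_kq_k)^{-1/2}(D_kF)|D_kL^{-1}F|$, another round of Cauchy--Schwarz, and once more the contraction estimates above, generates the three summands $2\sqrt{B_3}$, $2\sqrt{6}\sqrt{B_4}$ and $2\sqrt{3}\sqrt{B_5}$. The principal technical difficulty throughout will be to keep the final bound expressed in terms of kernels of $F$ alone rather than of $L^{-1}F$: this forces a systematic use of the contraction-in-mean inequality above in place of the cruder factorisation $\E[X^2Y^2]\leq\sqrt{\E[X^4]\,\E[Y^4]}$, together with a careful accounting of the cross terms in both the variance and the $\E[\delta(u)^2]$ expansions so that no contribution beyond $B_1,\ldots,B_5$ survives.
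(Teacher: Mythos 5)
You follow essentially the same route as the paper: both parts are obtained by feeding Theorem \ref{Kol_RAD} (part (2) for \eqref{2nd_R1}, part (3) for \eqref{2nd_R2}) with the bound $\E\big[|1-\langle DF,-DL^{-1}F\rangle|\big]\le\sqrt{\var\big(\langle DF,-DL^{-1}F\rangle\big)}$ and with the computations already carried out in \cite{KRT17} --- their Equation (4.6) yields the term $\frac{\sqrt{15}}{2}\sqrt{B_1}+\frac{\sqrt{3}}{2}\sqrt{B_2}$ and their pages 1093--1094 yield $\big\|\delta\big((pq)^{-1/2}DF|DL^{-1}F|\big)\big\|_{L^2(\Omega)}\le 2\sqrt{B_3}+2\sqrt{6}\sqrt{B_4}+2\sqrt{3}\sqrt{B_5}$ --- which are precisely the steps you sketch. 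The only point to tighten is your justification of the contraction-in-mean inequality $\E[(D_jL^{-1}F)^2(D_kL^{-1}F)^2]\le\E[(D_jF)^2(D_kF)^2]$: Jensen together with the $L^q$-contraction of $P_t$ only brings you to $\E\big[P_t((D_jF)^2)\,P_s((D_kF)^2)\big]$, which is \emph{not} in general dominated by $\E[(D_jF)^2(D_kF)^2]$ (this already fails for functionals of a single symmetric Rademacher variable), so one has to use the Mehler-type representation argument from \cite{KRT17} (see also \cite{LPS} in the Poisson setting), which is exactly the ingredient the paper imports by citation.
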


\begin{remark}\label{rem4_dw}
	\begin{itemize}
\item[(i)] Compared to the bound in \cite[Theorem 4.1]{KRT17},  we remark that the third and the fourth term
\begin{align*}
	A_3 &:= \sum_k  (p_k q_k)^{-1/2} \E[| D_kF|^3],\\
	A_4 &:= \| F\|_{L^r(\Omega)} \sum_{k\in\N} \frac{1}{\sqrt{p_k q_k}} \| D_k F\|_{L^t(\Omega)}\| D_k F\|_{L^{2s}(\Omega)} ^2
\end{align*}
there do not appear in Theorem \ref{thm:2ndOrderPoincare}, while 
\begin{center}
$\sqrt{ 15 B_1/4}=A_1$, $\sqrt{3B_2/4}=A_2$, $2\sqrt{B_3}=A_5$, $\sqrt{24 B_4}=A_6$ and $\sqrt{12 B_5}=A_7$
\end{center}
 in the notation of \cite{KRT17}.
 Especially we were able to remove the fourth term $A_4$, which involves the parameters $r,s,t>1$ with $r^{-1}+s^{-1}+t^{-1}=1$. Note that   the second bound \eqref{2nd_R2} contains  only three terms $B_1, B_2$ and $B_3$  and is more useful when  $\kappa B_3$ has the same order as $B_1, B_2$ or is of smaller order than $B_1, B_2$.
\item[(ii)] In  \cite[Remark 3.2]{Zheng17}, a second-order Gaussian Poincar\'e inequality is stated in Wasserstein distance: for $F\in\mathbb{D}^{1,2}$ with mean zero and variance one, 
\begin{align}
d_{W}(F, N): = \sup_h \big|\E\big[ h(F) - h(N) \big]  \big|\leq  \sqrt{\frac{15}{2\pi}} \sqrt{B_1} +  \sqrt{\frac{3}{2\pi}} \sqrt{B_2}    + A_3, \label{2nd_dw}
\end{align}
where the supremum runs over all $1$-Lipschitz   functions from $\R$ to $\R$ and $N\sim \mathcal{N}(0,1)$. The term $A_3$ may produce worse bound than other terms, see for example Remark \ref{rem_5}.
	\end{itemize}
\end{remark}

\begin{proof}[Proof of Theorem \ref{thm:2ndOrderPoincare}]
The bound is a direct consequence of Theorem \ref{Kol_RAD} and the computations already carried out in \cite{KRT17}. In fact, the   term
\[
 \E\Big[ \big\vert   1  - \langle DF,  - DL^{-1}F\rangle \big\vert \Big]
\]
 in Theorem \ref{Kol_RAD} is bounded by ${\sqrt{15}\over 2}\sqrt{B_1}+{\sqrt{3}\over 2}\sqrt{B_2}$ according to \cite[Equation (4.6)]{KRT17}. For the second term in \eqref{Kol_R1},  the computations on \cite[Pages 1093-1094]{KRT17} yield the following  $L^2$-bound:
\begin{align}
\big\| \delta\big(  (pq)^{-1/2} DF | DL^{-1}F| \big) \big\|_{L^2(\Omega)} \leq 2 \sqrt{B_3} + 2 \sqrt{6} \sqrt{B_4} + 2\sqrt{3} \sqrt{B_5}.  \label{need22}
\end{align}
This gives us the the bound \eqref{2nd_R1}, while the bound \eqref{2nd_R2} is also immediate. 
\end{proof}

\section{Proofs I: Infinite weighted $2$-runs}\label{sec:2runs}

We consider a Rademacher functional $F$, which belongs to the sum $\C_1\oplus\C_2$ of the first two Rademacher chaoses, that is, $F = J_1(f) + J_2(g)$ with $f\in\fH$ and $g\in\fH^{\otimes 2}$.  For simplicity,  we only consider the case where $p_k=1/2$ for all $k\in\N$ and assume $f\in \fH$, $g\in\fH^{\odot 2}_0$ satisfy $\var(F)= \| f\|^2_{\fH} + 2 \| g\|^2_{\fH^{\otimes 2}} =1$. Then by the hypercontractivity property, $F\in L^4(\Omega)$ so that the same arguments as in  Remark \ref{rem_FMT} imply that the bound \eqref{need2} still holds true. As a consequence, with $N\sim \mathcal{N}(0,1)$,
		\begin{align*}
			d_K(F, N)& \leq \E\Big[ \big\vert 1 -  \langle DF, -DL^{-1}F \rangle \big\vert \Big] + 2  \big\| \delta\big(  (pq)^{-1/2} DF | DL^{-1}F| \big) \big\|_{L^2(\Omega)} \\
			&\leq  \sqrt{ \var\big(  \langle DF, -DL^{-1}F \rangle   \big)} + 2 \big(2 \sqrt{B_3} + 2 \sqrt{6} \sqrt{B_4} + 2\sqrt{3} \sqrt{B_5} \big)
		\end{align*}
		by \eqref{need22} and Theorem \ref{Kol_RAD}, where $B_3, B_4, B_5$ are introduced in Theorem \ref{thm:2ndOrderPoincare}. Since $F\in \C_1\oplus \C_2$, we can get an explicit expression for $ \langle DF, -DL^{-1}F \rangle $ by direct computation:
		\[
		\sqrt{ \var\big(  \langle DF, -DL^{-1}F \rangle   \big)}  \leq 2\sqrt{2} \| g\star^1_1 g \1_{\Delta_2} \|_{\fH^{\otimes 2}} + 3 \| f\star^1_1 g\|^2_{\fH}, 
		\]
		where $\star^1_1$ denotes the star-contraction,  see \cite[Page 1728]{NourdinPeccatiReinert} for more explanation.

		Now let us compute the terms $B_3, B_4, B_5$. First notice that 
		\[
		D_k F = f(k) + 2J_1\big(  g(k, \,\cdot\,) \big)
		\]
		and by the hypercontractivity property in this symmetric setting\footnote{This moment inequality can be proved  by using the multiplication formula in the symmetric setting (see \cite[Proposition 2.9]{NourdinPeccatiReinert}).  }, we can find some absolute constant $\theta>0$ such that
		\[
		\E\big[ (D_kF)^4 \big] \leq \theta \Big( \E\big[ (D_kF)^2 \big] \Big)^2 = \theta \left(   f^2(k) + 4 \sum_{i\in \N}  g^2(k,i) \right)^2 \leq 2\theta   f^4(k)  + 32 \theta\left(  \sum_{i\in \N}  g^2(k,i) \right)^2.
		\]
		It follows  that 
		\begin{align*}
			B_3&= 4\sum_{k\in\N}  \E\big[ (D_kF)^4 \big] \leq 8\theta \sum_{k\in\N}  f^4(k) +   128\theta \sum_{k\in\N}    \left(  \sum_{i\in \N}  g^2(k,i) \right)^2,  \\
			B_4&=16 \theta^{1/2} \sum_{k \in\N}  \left[ f^2(k) + 4 \sum_{i\in \N}  g^2(k,i)\right]   \left(   \sum_{\ell \in\N}  g^2(\ell,k)  \right)^2 \\
			&  =16 \theta^{1/2} \sum_{k\in\N} f^2(k)  \left(   \sum_{\ell \in\N}  g^2(\ell,k)  \right)^2 +  64 \theta^{1/2}  \sum_{k\in\N}   \left(   \sum_{\ell \in\N}  g^2(\ell,k)  \right)^3,   \\
			B_5&= 256\sum_{k,\ell \in\N}       g^4(\ell,k). 
		\end{align*}
		Hence, we arrive at
		\begin{equation}\label{eq:J1+J2}
			\begin{split}
d_K(F, N)&  \leq C\Bigg(  \| g\star^1_1 g \1_{\Delta_2} \|_{\fH^{\otimes 2}} +  \| f\star^1_1 g\|^2_{\fH}  + \left( \sum_{k\in\N}  f^4(k)\right)^{1/2}  + \left(\sum_{k,\ell \in\N}       g^4(\ell,k)  \right)^{1/2}  \\
&\qquad\qquad\qquad + \left[ \sum_{k\in\N} \big[ 1 +  f^2(k)  \big] \left(   \sum_{\ell \in\N}  g^2(\ell,k)  \right)^2 \right]^{1/2}   \Bigg)
			\end{split}
		\end{equation}
		with a suitable absolute constant $C>0$. We remark that a similar bound for a probability metric defined by twice differentiable functions has been obtained in \cite[Proposition 5.1]{NourdinPeccatiReinert}.
		

\medspace

Given the bound \eqref{eq:J1+J2} we can now present the proof of Theorem \ref{thm:2runs}. We only sketch the main arguments and refer to  \cite[Section 5.3]{NourdinPeccatiReinert} for more detailed computations.  

\begin{proof}[Proof of Theorem \ref{thm:2runs}] First, by introducing $X_i =2\xi_i -1$ for $i\in\Z$, we get a sequence of independent and identically distributed symmetric Rademacher random variables and we can rewrite 
	\[
	F_n: = \frac{G_n - \E G_n }{\sqrt{\var(G_n) }} = J_1(f) + J_2(g),
	\]
	with 
	\begin{align*}
		f &= \frac{1}{4\sqrt{  \var(G_n) } } \sum_{a\in\Z} \alpha_a^{(n)} \big(\1_{\{a\}} + \1_{\{a+1\} }  \big) , \\
		g &= \frac{1}{8\sqrt{  \var(G_n) } } \sum_{a\in\Z} \alpha_a^{(n)} \big(  \1_{\{a\}}\otimes \1_{\{a+1\}}  +  \1_{\{a+1\}}\otimes \1_{\{a\}}     \big).
	\end{align*}
	Note that although here the Rademacher random variables are indexed by $\Z$ instead of $\N$, our main results can be fully carried to this setting. From  \cite[Section 5.3]{NourdinPeccatiReinert}, we have
	\begin{align}\label{last_s1}
		\| g\star^1_1 g \1_{\Delta_2} \|_{ \ell^2(\Z)^{\otimes 2}}  + \| f\star^1_1 g \1_{\Delta_2} \|_{ \ell^2(\Z)  } +  \left( \sum_{k\in\N}  f^4(k)\right)^{1/2}    \leq  \frac{1}{\var(G_n)  } \left( \sum_{i\in\Z}  \big( \alpha_i^{(n)}  \big)^4     \right)^{1/2}. 
	\end{align}
	Note that  $f(k)^2 = \frac{1}{16 \var(G_n)}\big[ \big( \alpha_k^{(n)}\big)^2 +    \big( \alpha_{k-1}^{(n)}\big)^2   \big] \leq 1$, in view of the expression 
	\[
	\var(G_n) = \frac{3}{16} \sum_{i\in\Z} \big( \alpha^{(n)}_i \big)^2 +  \frac{1}{8} \sum_{i\in\Z} \alpha^{(n)}_i \alpha^{(n)}_{i+1},
	\]
	see Equation (5.56) in  \cite{NourdinPeccatiReinert}.
	In fact, it is not difficult to see that
	\begin{align}\label{var_G_n}
		\frac{1}{16} \sum_{i\in\Z} \big( \alpha^{(n)}_i \big)^2    \leq  \var(G_n)  \leq  \frac{5}{16} \sum_{i\in\Z} \big( \alpha^{(n)}_i \big)^2.
	\end{align}
	It remains to estimate 
	\[
	\left( \sum_{k,\ell \in\Z}       g^4(\ell,k)  \right)^{1/2} +  \left[ \sum_{k\in\Z} \left(   \sum_{\ell \in\Z}  g^2(\ell,k)  \right)^2 \right]^{1/2}. 
	\]
	From 
	\[
	g(k,\ell) =  \frac{1}{8\sqrt{  \var(G_n) } }   \Big(    \alpha_k^{(n)}  \1_{\{\ell = k+1\}}  +    \alpha_{k-1}^{(n)}    \1_{\{\ell = k-1\}}     \Big),
	\]
	we obtain by direct computations that 
	\[
	\sum_{k,\ell \in\Z}       g^4(\ell,k) =  \frac{2}{8^4   \var(G_n) ^2 }  \sum_{k\in\Z}   \big(    \alpha_k^{(n)}  \big)^4  
	\]
	and
	\[
	\sum_{k\in\Z} \left(   \sum_{\ell \in\Z}  g^2(\ell,k)  \right)^2 = \frac{1}{8^4   \var(G_n) ^2 } \sum_{k\in\Z}  \Big[   \big(    \alpha_k^{(n)}  \big)^2   +    \big(    \alpha_{k-1}^{(n)}  \big)^2 \Big]^2 \leq  \frac{4}{8^4   \var(G_n) ^2 }  \sum_{k\in\Z}   \big(    \alpha_k^{(n)}  \big)^4.
	\]
	These two estimates, together with \eqref{last_s1}, imply the first bound in \eqref{2run1}. The inequalities in \eqref{var_G_n} imply the second bound  as well.  \end{proof}

\section{Proofs II: Subgraph counts in the Erd\H{o}s-R\'enyi random graph}\label{sec:ApplSubgraphs}

 Given a fixed graph $\fixGraph$,   we are interested in the number $S$ of subgraphs of $\mathbf{G}(n,p)$ that  are isomorphic to $\fixGraph$.  
From \cite[Lemma 3.5]{JLRBook} it is known that\, textcolor{green}{as $n\to\infty$,}
\begin{align*}
	\sigma^2 &:= \var(S) \asymp \frac{q  n^{2\vertices{\fixGraph}}p^{2\edges{\fixGraph}}}{\psi},
\end{align*}
where we recall that $\vertices{\fixGraph}, \edges{\fixGraph}$ are the number of vertices {and} edges of the graph $\fixGraph$, respectively, and that $q=1-p$.
As in the introduction, we put $W := (S-\E [S])/ \sigma$ and recall from Theorem \ref{thm:subgraphs} that our goal is to prove that 
\begin{align}\label{bdd_thm_subgraphs}
d_K(W, N)  = \cO\Big( (q \psi)^{-1/2} \Big), 
\end{align}
where $N\sim \mathcal{N}(0,1)$ and $\psi := \min_H n^{\vertices{H}}p^{\edges{H}}$ with the minimum running over all subgraphs $H$ of $\fixGraph$ with at least one edge.

\begin{proof}[Proof of Theorem \ref{thm:subgraphs}]
We start by observing that we may assume without loss of generality that $\fixGraph$ has no isolated vertices. Indeed, if $\fixGraph$ has isolated vertices, let $\fixGraph_0 \subset \fixGraph$ be the subgraph obtained by removing these vertices.
Let $S$ and $S_0$ be the number of subgraphs of $\mathbf{G}(n,p)$ that are isomorphic to $\fixGraph$ and $\fixGraph_0$, respectively.
Every copy of $\fixGraph_0$ in $\mathbf{G}(n,p)$ can be completed to a copy of $\fixGraph$ in $c=\binom{n-\vertices{\fixGraph_0}}{\vertices{\fixGraph}-\vertices{\fixGraph_0}}$ different ways.
It follows that $S = c\, S_0$ and $\sigma = c \, \sigma_0$ with $\sigma^2 = \var (S)$ and $\sigma_0^2 = \var(S_0)$.
This yields the identity $W = (S-\E [S])/ \sigma = (S_0-\E [S_0])/ \sigma_0$. Now let $H$ be a subgraph of $\fixGraph$ or $\fixGraph_0$, which may or may not have isolated vertices.
Let $H_0 \subset H$ be the subgraph obtained by removing all isolated vertices from $H$.
Then $H_0$ is a subgraph of both, $\fixGraph$ and $\fixGraph_0$,
with $\vertices{H_0} \leq \vertices{H}$ and $\edges{H_0} = \edges{H}$.
Hence, $n^{\vertices{H_0}}p^{\edges{H_0}} \leq n^{\vertices{H}}p^{\edges{H}}$.
This yields $\psi(n,p,\fixGraph) = \psi(n,p,\fixGraph_0)$.
	
By the previous discussion we assume from now on that $\fixGraph$ has no isolated vertices.
In this setting,  each subgraph $H$ of $\mathbf{G}(n,p)$ that is isomorphic to $\fixGraph$ (denoted by $H\simeq \fixGraph$) is 
uniquely identified 
by its set of edges.
Let $E$ be the set of all possible edges in the complete graph on $n$ vertices and  identify   a set of edges $\gamma \subset E$ with the induced subgraph.
In particular, we will  write $\vertices{\gamma}$ and $\edges{\gamma}$ for the number of vertices and edges of the subgraph induced by $\gamma$, respectively.
Note that the number of edges $\edges{\gamma}$ (regarding $\gamma$ as a subgraph) is the same as the cardinality $\vert \gamma \vert$ (regarding $\gamma$ as a set of edges).
Now, we define  $M: = \{ \gamma \subset E: \gamma \simeq \fixGraph\}$ and $M_k: = \{ \gamma\in M : k\in \gamma\}$,
where the latter set  consists of all copies of $\fixGraph$ which contain a given edge $k \in E$. Also, 
we define 
 \[
 X_k := \1_{\{ \text{edge $k$ is present in  $\textbf{G}(n,p)$}   \}} -  \1_{\{ \text{edge $k$ is not present in  $\textbf{G}(n,p)$}   \}}
 \]
for each $k \in E$,
so that $\{ X_k: k \in E \}$ is a finite set of independent and identically distributed Rademacher random variables.

For every $\gamma \in M$,    let $I_\gamma$ be the centred indicator for the presence of the subgraph $\gamma$ in $\mathbf{G}(n,p)$.
Then, we can represent $W$ as
\begin{align*}
W = \frac{S-\E S}{\sigma}
= \frac{1}{\sigma} \sum_{\gamma \in M} I_\gamma
\qquad\text{with}\qquad
I_\gamma = \left( \prod_{k \in \gamma} \frac{X_k +1}{2} \right) - p^{\edges{\fixGraph}}.
\end{align*}
Hence, the random variable $W$  is a Rademacher functional based only on finitely many of the Rademacher random variables $X_k$. In particular, both conditions of Theorem \ref{Kol_RAD}-(2) are fulfilled, see Remark \ref{rem1}.
Thus, we get the upper bound
\begin{align*}
	d_K(W, N) \leq    \E\Big[ \big\vert   1  - \langle DW,  - DL^{-1}W\rangle \big\vert \Big] +  2 \Big\| \delta\big((pq)^{-1/2} DW| DL^{-1}W| \big)  \Big\|_{L^1(\Omega)}.
\end{align*}
By the Cauchy-Schwarz inequality the first summand can be bounded by
$\sqrt{ \var\big(  \langle DW, -DL^{-1}W \rangle   \big)}$.
Following the steps (4.9), (4.10), (4.11) in \cite{KRT17}, we can also bound the second summand.
Taken together, we get
\[
	d_K(W, N) \leq
	\sqrt{ C_1 }
	+  2 \sqrt{ C_2 }
	+  2 \sqrt{ C_3 }
	\]
with
\begin{align*}
	C_1 :={}& \var\big(  \langle DW, -DL^{-1}W \rangle   \big), \qquad  \quad C_2 :={} \frac{1}{pq} \sum_{k \in E} \E\big[ (D_k W)^4 \big], \\
	C_3 :={}& \frac{1}{pq} \sum_{k,\ell \in E} \E\bigg[ \Big( D_\ell \big( (D_k W)(D_kL^{-1}W) \big) \Big)^2 \bigg] .
\end{align*}
To study these quantities we use the decomposition of $W$ as a sum over the terms $\sigma^{-1} I_\gamma$,
as well as the bilinearity of the covariance and the linearity of $D_k$ and $L^{-1}$.
This gives 
\begin{align*}
C_1 &= \var\left(  \sum_{k \in E} (D_kW)(D_kL^{-1}W)   \right) \\
	&= \sum_{k,\ell \in E} \cov\Big(  (D_kW) D_kL^{-1}W   ,   (D_\ell W) D_\ell L^{-1}W   \Big) \\
	&= \frac{1}{\sigma^4} \sum_{k,\ell \in E} \sum_{\gamma_1,\gamma_2,\gamma_3,\gamma_4 \in M}
		\cov\Big(  (D_k I_{\gamma_1})  D_kL^{-1} I_{\gamma_2}   ,   (D_\ell I_{\gamma_3}) D_\ell L^{-1} I_{\gamma_4}   \Big),
\end{align*}
as well as 		
\begin{align*}
C_2 &= \frac{1}{pq} \frac{1}{\sigma^4} \sum_{k \in E} \sum_{\gamma_1,\gamma_2,\gamma_3,\gamma_4 \in M}  \E \left[ \prod_{j=1}^4 D_k I_{\gamma_j} \right],\\
C_3 &= \frac{1}{pq} \frac{1}{\sigma^4} \sum_{k,\ell \in E} \sum_{\gamma_1,\gamma_2,\gamma_3,\gamma_4 \in M}
		\E\bigg[
			\bigg( D_\ell \Big( (D_k I_{\gamma_1})  D_kL^{-1} I_{\gamma_2} \Big) \bigg)
			 D_\ell \Big( (D_k I_{\gamma_3}) D_kL^{-1} I_{\gamma_4}  \Big) 
		\bigg].
\end{align*}
As in  \eqref{Yk}, we write   $Y_k := \frac{X_k - p + q}{2\sqrt{pq}}$ for  $k\in E$ and get
\begin{align*}
I_\gamma
&=  \left( \prod_{k \in \gamma}   \big(  \sqrt{pq} \,  Y_k + p  \big)  \right) - p^{\edges{\fixGraph}} = \sum_{\varnothing \neq A \subset \gamma}   p^{\edges{\fixGraph} - {\frac{\vert A \vert}{2}}} q^{\frac{\vert A \vert}{2}}  
	\prod_{a \in A} Y_a.
	\end{align*}
Since   $D_k Y_\ell =\1_{\{ k =\ell\}}$, we have
\begin{align*}
D_k I_\gamma
&= \sum_{\varnothing \neq A \subset \gamma} \1_{\{k \in A\}}    p^{\edges{\fixGraph} - {\frac{\vert A \vert}{2}}} q^{\frac{\vert A \vert}{2}}   
	\prod_{a \in A \backslash \{ k \}} Y_a   
\end{align*}
for $k \in E$. Analogously,
\begin{align*}
 -  L^{-1}I_\gamma
&= \sum_{\varnothing \neq A \subset \gamma}   \frac{1}{\vert A \vert} p^{\edges{\fixGraph} - {\frac{\vert A \vert}{2}}} q^{\frac{\vert A \vert}{2}}   
	\prod_{a \in A} Y_a, \\
 -   D_kL^{-1}I_\gamma
&= \sum_{\varnothing \neq A \subset \gamma}  \frac{1}{\vert A \vert}   \1_{\{k \in A\}} p^{\edges{\fixGraph} - {\frac{\vert A \vert}{2}}} q^{\frac{\vert A \vert}{2}}   
	\prod_{a \in A \backslash \{ k \}} Y_a.
\end{align*}
Hence,  using the short-hand  notation  $Y_B := \prod_{b\in B} Y_b $, we can further  write 
\begin{align*}
C_1 &= \frac{p^{4\edges{\fixGraph}}}{\sigma^4} \sum_{k,\ell \in E} \sum_{\gamma_1,\gamma_2,\gamma_3,\gamma_4 \in M}
		\sum_{\substack{
			\varnothing \neq A_1 \subset \gamma_1 \\
			\varnothing \neq A_2 \subset \gamma_2 \\
			\varnothing \neq A_3 \subset \gamma_3 \\
			\varnothing \neq A_4 \subset \gamma_4 }}
		\frac{1}{\vert A_2 \vert}  \frac{1}{\vert A_4 \vert}  
		f_{1,k,\ell} (A_1, A_2, A_3, A_4), \\
C_2 &= \frac{p^{4\edges{\fixGraph}}}{\sigma^4} \sum_{k \in E} \sum_{\gamma_1,\gamma_2,\gamma_3,\gamma_4 \in M}
		\sum_{\substack{
			\varnothing \neq A_1 \subset \gamma_1 \\
			\varnothing \neq A_2 \subset \gamma_2 \\
			\varnothing \neq A_3 \subset \gamma_3 \\
			\varnothing \neq A_4 \subset \gamma_4 }}
		f_{2,k}(A_1,A_2,A_3,A_4), \\
C_3 &= \frac{p^{4\edges{\fixGraph}}}{\sigma^4} \sum_{k,\ell \in E} \sum_{\gamma_1,\gamma_2,\gamma_3,\gamma_4 \in M}
		\sum_{\substack{
			\varnothing \neq A_1 \subset \gamma_1 \\
			\varnothing \neq A_2 \subset \gamma_2 \\
			\varnothing \neq A_3 \subset \gamma_3 \\
			\varnothing \neq A_4 \subset \gamma_4 }}
		\frac{1}{\vert A_2 \vert}  \frac{1}{\vert A_4 \vert} 
		f_{3,k,\ell}(A_1,A_2,A_3,A_4),
\end{align*}
where
\begin{align*}
	f_{1,k,\ell} (A_1, A_2, A_3, A_4)
	:&= 	p^{-\frac{\vert A_1 \vert + \vert A_2 \vert + \vert A_3 \vert + \vert A_4 \vert}{2}}
		q^{\frac{\vert A_1 \vert + \vert A_2 \vert + \vert A_3 \vert + \vert A_4 \vert}{2}}  \1_{ \left\{   \substack{ k \in A_1 \cap A_2 \\  \ell \in A_3 \cap A_4}   \right\}}  \notag   \\
	&\hspace{2cm} \times 	\cov\big(
			  Y_{A_1 \backslash \{ k \}} 
			 Y_{ A_2 \backslash \{ k \}},
			 Y_{ A_3 \backslash \{ \ell \}} 
			 Y_{A_4 \backslash \{ \ell \}}
		\big)  \notag  \\
		& \quad  \notag  \\
	f_{2,k}(A_1,A_2,A_3,A_4)
	:&= 	p^{-\frac{\vert A_1 \vert + \vert A_2 \vert + \vert A_3 \vert + \vert A_4 \vert}{2} -1}
		q^{\frac{\vert A_1 \vert + \vert A_2 \vert + \vert A_3 \vert + \vert A_4 \vert}{2} -1}
		 \1_{\{ k \in A_1 \cap A_2 \cap A_3 \cap A_4\}}  \notag \\
		& \hspace{2cm}  \times  \E\big[
			 Y_{A_1 \backslash \{ k \}} 
			  Y_ {A_2 \backslash \{ k \}} 
			 Y_ {A_3 \backslash \{ k \}} 
			Y_{A_4 \backslash \{ k \}}   \big]   \notag \\
					& \quad \notag 
\end{align*}
and
\begin{align}
	f_{3,k,\ell} (A_1, A_2, A_3, A_4)
	:&= 	p^{-\frac{\vert A_1 \vert + \vert A_2 \vert + \vert A_3 \vert + \vert A_4 \vert}{2} -1}
		q^{\frac{\vert A_1 \vert + \vert A_2 \vert + \vert A_3 \vert + \vert A_4 \vert}{2} -1}
		 \1_{\{ k \in A_1 \cap A_2 \cap A_3 \cap A_4\}}  \notag \\
		&\hspace{2cm} \times \E\big[
			\big( D_\ell \big( 
				 Y_{A_1 \backslash \{ k \}} 
			  Y_ {A_2 \backslash \{ k \}}  
			\big) \big)
		 D_\ell \big( 
				 Y_{A_3 \backslash \{ k \}} 
			  Y_ {A_4 \backslash \{ k \}}  
			\big)  
		\big]. \label{in_there}
\end{align}
For given $k, \ell \in E$, $\gamma_i \in M$, $\varnothing \neq A_i \subset \gamma_i$, $i = 1,2,3,4$,
we put
\begin{center}
$h_1 := \gamma_1 \cap \gamma_2$, \quad
$h_2 := (\gamma_1 \cup \gamma_2) \cap \gamma_3$ \quad and \quad 
$h_3 := (\gamma_1 \cup \gamma_2 \cup \gamma_3) \cap \gamma_4$,
\end{center}
and claim that 
\begin{align}
	\vert f_{1,k,\ell} (A_1, A_2, A_3, A_4) \vert
		&\leq 
		2q^2 p^{- (\edges{h_1} + \edges{h_2} + \edges{h_3})} 
		\label{f1bound} \\
	\vert f_{2,k}(A_1,A_2,A_3,A_4) \vert
		&\leq 
		 q p^{- (\edges{h_1} + \edges{h_2} + \edges{h_3})}  
		\label{f2bound} \\
	\vert f_{3,k,\ell} (A_1, A_2, A_3, A_4) \vert
		&\leq 
		q  p^{- (\edges{h_1} + \edges{h_2} + \edges{h_3})}.
		\label{f3bound}
\end{align}
In what follows,  we will first  deduce  from  \eqref{f1bound}--\eqref{f3bound} the following bounds:
\begin{equation}\label{eq:19-07a}
C_1 = \cO\big(  \psi^{-1} \big), \qquad C_2 = \cO\big(  (q\psi)^{-1} \big) \quad{\rm and}\quad C_3=  \cO\big(  (q\psi)^{-1} \big),
\end{equation}
which in turn imply \eqref{bdd_thm_subgraphs}.  The verification of the claims \eqref{f1bound}--\eqref{f3bound} is postponed to the end of the current section. 

\medskip

\paragraph{\bf Bounding $C_1$.}   If $f_{1,k,\ell} (A_1, A_2, A_3, A_4) \neq 0$,  then
\begin{itemize}
\item[(i)]  $\gamma_1, \gamma_2 \in M_k$ and $\gamma_3, \gamma_4 \in M_\ell$, implying that
  $\edges{h_1}, \edges{h_3} \geq 1$;
  \item[(ii)]  $\gamma_1 \cup \gamma_2$ and $\gamma_3 \cup \gamma_4$ must have at least one edge in common, since otherwise $
			  Y_{A_1 \backslash \{ k \}} 
			 Y_{ A_2 \backslash \{ k \}}$ and
			$ Y_{ A_3 \backslash \{ \ell \}} 
			 Y_{A_4 \backslash \{ \ell \}}$ are independent (with thus vanishing covariance);
	\item[(iii)]  by changing the names of $\gamma_3$ and $\gamma_4$ (if necessary),   we can ensure
that $\gamma_1 \cup \gamma_2$ and $\gamma_3$ always have at least one common edge,
so that $\edges{h_2}  \geq 1$.
  \end{itemize}
Therefore,
\begin{align*}
C_1
	&\leq \frac{p^{4\edges{\fixGraph}}}{\sigma^4}
		\sum_{k,\ell \in E}
		\sum_{\gamma_1,\gamma_2 \in M_k}
		\sum_{\gamma_3,\gamma_4 \in M_\ell}
		\sum_{\substack{
			\varnothing \neq A_1 \subset \gamma_1 \\
			\varnothing \neq A_2 \subset \gamma_2 \\
			\varnothing \neq A_3 \subset \gamma_3 \\
			\varnothing \neq A_4 \subset \gamma_4 }}
		2 \cdot \1_{\{ (\gamma_1 \cup \gamma_2) \cap \gamma_3 \neq \varnothing \}} 
		\vert f_{1,k,\ell} (A_1, A_2, A_3, A_4) \vert,
\end{align*}
and we can decompose the sums over $\gamma_1,\gamma_2\in M_k$ and  $\gamma_3,\gamma_4 \in M_\ell$ as follows, using additionally the claim \eqref{f1bound}:
\begin{align*}
C_1
	&\leq \frac{p^{4\edges{\fixGraph}}}{\sigma^4}
		\sum_{k \in E}
		\sum_{\substack{ \gamma_1 \in M_k}}
		\sum_{\substack{ h_1 \subset \gamma_1 \\ \edges{h_1} \geq 1}}
		\sum_{\substack{ \gamma_2 \in M_k \\ \gamma_1 \cap \gamma_2 = h_1}}
		\sum_{\substack{ h_2 \subset \gamma_1 \cup \gamma_2 \\ \edges{h_2} \geq 1}}
		\sum_{\ell \in E}
		\sum_{\substack{ \gamma_3 \in M_\ell \\ (\gamma_1 \cup \gamma_2) \cap \gamma_3 = h_2 }}\\
		&\hspace{2cm}\times
		\sum_{\substack{ h_3 \subset \gamma_1 \cup \gamma_2 \cup \gamma_3 \\ \edges{h_3} \geq 1}}
		\sum_{\substack{ \gamma_4 \in M_\ell \\ (\gamma_1 \cup \gamma_2 \cup \gamma_3) \cap \gamma_4 = h_3}}
	2\cdot
	 (2^{\edges{\fixGraph}})^4 \cdot 
		2q^2 p^{ - (\edges{h_1} + \edges{h_2} + \edges{h_3})} .
\end{align*}
Now we have to count the number of summands in the above display.
\begin{itemize}
\item[(1)] The sum over $k \in E$ contains  $\cO( n^2)$ terms.

\item[(2)] Given $k \in E$,  there are at most $\cO( n^{\vertices{\fixGraph}-2} )$ possibilities to complete $k$ to a copy $\gamma_1$ of $\fixGraph$.

\item[(3)]  Given $h_1 \subset \gamma_1$,  there are $\cO(n^{\vertices{\fixGraph}-\vertices{h_1}})$ possibilities to choose $\gamma_2 \in M$
so that $\gamma_1 \cap \gamma_2 = h_1$.

\item[(4)] The sum over $\gamma_4 \in M$ can be treated the same way and contains $\cO\big( n^{\vertices{\fixGraph} - \vertices{h_3}} \big)$ terms.
\end{itemize}
It remains to look at the double sum $\sum_{\ell \in E} \sum_{\gamma_3 \in M_\ell , (\gamma_1 \cup \gamma_2) \cap \gamma_3 = h_2  }$ for  given $h_2, \gamma_1, \gamma_2$.
Here we have to distinguish three cases according to the relation between $\ell$ and $\gamma_1 \cup \gamma_2$.

\textbf{Case 1:}
  There are at most $(\vertices{\gamma_1 \cup \gamma_2})^2$ possibilities to choose $\ell \in E$
	such that both vertices of  the edge $\ell$ are also vertices of $\gamma_1 \cup \gamma_2$.
	In this case,  these two vertices have to be in $h_2$ and there are  $\cO ( n^{\vertices{\fixGraph} - \vertices{h_2}} )$ possibilities to choose $\gamma_3 \in M$
	so that $(\gamma_1 \cup \gamma_2) \cap \gamma_3 = h_2$ and $\ell \in \gamma_3$.

\textbf{Case 2:}
There are at most $n\vertices{\gamma_1 \cup \gamma_2} $ possibilities to choose $\ell \in E$
	such that exactly one vertex of the edge  $\ell$ is also a vertex of $\gamma_1 \cup \gamma_2$. In this case, 
	  this vertex of $\ell$ has to be in $h_2$ and  the other vertex of $\ell$ is not in $h_2$. Therefore, there are  $\cO ( n^{\vertices{\fixGraph} - (\vertices{h_2}+1)} )$ possibilities
	to choose $\gamma_3 \in M$ so that $(\gamma_1 \cup \gamma_2) \cap \gamma_3 = h_2$ and $\ell \in \gamma_3$.

\textbf{Case 3:}
 There are at most $n^2$ possibilities to choose $\ell \in E$ such that 
	 none of the vertices of the edge $\ell$ is   a vertex of $\gamma_1 \cup \gamma_2$.
	In this case,  none of these two vertices is in $h_2$ and there are  $\cO( n^{\vertices{\fixGraph} - (\vertices{h_2} + 2)} )$ possibilities to choose $\gamma_3 \in M$
	so that $(\gamma_1 \cup \gamma_2) \cap \gamma_3 = h_2$ and $\ell \in \gamma_3$.

In each of these cases,  the  double sum $\sum_{\ell \in E} \sum_{\gamma_3 \in M_\ell , (\gamma_1 \cup \gamma_2) \cap \gamma_3 = h_2  }$ runs over 
$\cO\big( n^{\vertices{\fixGraph} - \vertices{h_2}} \big)$ 
terms.
Hence,
\begin{align*}
C_1 &= \cO \Bigg(
	\frac{p^{4\edges{\fixGraph}}}{\sigma^4} \cdot
	n^2 \cdot
	n^{\vertices{\fixGraph}-2}
	\sum_{\substack{ h_1 \subset \fixGraph \\ \edges{h_1} \geq 1}}
	n^{\vertices{\fixGraph} - \vertices{h_1}}
	\sum_{\substack{ h_2 \subset \fixGraph \\ \edges{h_2} \geq 1}}
	n^{\vertices{\fixGraph} - \vertices{h_2}}
	\sum_{\substack{ h_3 \subset \fixGraph \\ \edges{h_3} \geq 1}}
	n^{\vertices{\fixGraph} - \vertices{h_3}} \cdot
	p^{ - (\edges{h_1} + \edges{h_2} + \edges{h_3})} \cdot q^2
	\Bigg)\\
&= \cO \Bigg(
	\frac{q^2 \cdot n^{4\vertices{\fixGraph}} p^{4\edges{\fixGraph}}}{\sigma^4} \cdot
	\left(
	\sum_{ h \subset \fixGraph : \edges{h} \geq 1}
	n^{-\vertices{h}} p^{-\edges{h}}
	\right)^3
	\Bigg) = \cO \Bigg(
	\frac{q^2 \cdot n^{4\vertices{\fixGraph}} p^{4\edges{\fixGraph}}}{\sigma^4 \cdot \psi^3}
	\Bigg).
\end{align*}
Since  $\sigma^2 \asymp \frac{q \cdot n^{2\vertices{\fixGraph}}p^{2\edges{\fixGraph}}}{\psi}$, we  find that
$
C_1  = \cO\left( \psi^{-1} \right).
$

\medskip

\paragraph{\bf Bounding $C_2$ and $C_3$.} Most steps for handling $C_2$ and $C_3$ are similar to the above arguments and for that reason we only present the ideas. Suppose  that $f_{2,k} (A_1, A_2, A_3, A_4) \neq 0$, then 
  $\gamma_1, \gamma_2, \gamma_3, \gamma_4 \in M_k$.
We may assume that $\edges{h_1}, \edges{h_2}, \edges{h_3} \geq 1$.
Counting the non-trivial summands leads to
\begin{align*}
C_2
	&= \cO \Bigg(
	\frac{p^{4\edges{\fixGraph}}}{\sigma^4} \cdot
	n^2 \cdot
	n^{\vertices{\fixGraph}-2}
	\sum_{\substack{ h_1 \subset \fixGraph \\ \edges{h_1} \geq 1}}
	n^{\vertices{\fixGraph} - \vertices{h_1}}
	\sum_{\substack{ h_2 \subset \fixGraph \\ \edges{h_2} \geq 1}}
	n^{\vertices{\fixGraph} - \vertices{h_2}}
	\sum_{\substack{ h_3 \subset \fixGraph \\ \edges{h_3} \geq 1}}
	n^{\vertices{\fixGraph} - \vertices{h_3}} \cdot
	p^{ - (\edges{h_1} + \edges{h_2} + \edges{h_3})} \cdot q
	\Bigg)\\
&= \cO \Bigg(
	\frac{q \cdot n^{4\vertices{\fixGraph}} p^{4\edges{\fixGraph}}}{\sigma^4 \cdot \psi^3}
	\Bigg) = \cO\Big( (q\psi)^{-1} \Big).
\end{align*}
Similarly, suppose that
$f_{3,k,\ell} (A_1, A_2, A_3, A_4) \neq 0$, then  
  $\gamma_1, \gamma_2, \gamma_3, \gamma_4 \in M_k$,
so that  we may assume that $\edges{h_1}, \edges{h_2}, \edges{h_3} \geq 1$.
It further implies that $\ell \in \gamma_1 \cup \gamma_2$ and $\ell \in \gamma_3 \cup \gamma_4$ and $k \neq \ell$.
Otherwise, the $D_\ell$-operation in \eqref{in_there} would vanish, meaning that
\[
 D_\ell \big(   Y_{A_1 \backslash \{ k \}}   Y_ {A_2 \backslash \{ k \}}   \big) 		 
 = 
 D_\ell \big(   Y_{A_3 \backslash \{ k \}}   Y_ {A_4 \backslash \{ k \}}   \big) 
  =
  0
\]
By changing the names of $\gamma_1$ and $\gamma_2$ (if necessary),  we can ensure
that $\ell \in \gamma_1$.
This leads to
\begin{align*}
C_3 &\leq \frac{p^{4\edges{\fixGraph}}}{\sigma^4}
		\sum_{k,\ell \in E}
		\sum_{\gamma_1 \in M_k \cap M_\ell}
		\sum_{\gamma_2, \gamma_3,\gamma_4 \in M_k}
		\sum_{\substack{
			\varnothing \neq A_1 \subset \gamma_1 \\
			\varnothing \neq A_2 \subset \gamma_2 \\
			\varnothing \neq A_3 \subset \gamma_3 \\
			\varnothing \neq A_4 \subset \gamma_4 }}
		2 \cdot
		\vert f_{3,k,\ell} (A_1, A_2, A_3, A_4) \vert\\
	&= \cO \Bigg(
	\frac{p^{4\edges{\fixGraph}}}{\sigma^4} \cdot
	n^{\vertices{\fixGraph}}
	\sum_{\substack{ h_1 \subset \fixGraph \\ \edges{h_1} \geq 1}}
	n^{\vertices{\fixGraph} - \vertices{h_1}}
	\sum_{\substack{ h_2 \subset \fixGraph \\ \edges{h_2} \geq 1}}
	n^{\vertices{\fixGraph} - \vertices{h_2}}
	\sum_{\substack{ h_3 \subset \fixGraph \\ \edges{h_3} \geq 1}}
	n^{\vertices{\fixGraph} - \vertices{h_3}} \cdot
	p^{ - (\edges{h_1} + \edges{h_2} + \edges{h_3})} \cdot q
	\Bigg)\\
&= \cO \Bigg(
	\frac{q \cdot n^{4\vertices{\fixGraph}} p^{4\edges{\fixGraph}}}{\sigma^4 \cdot \psi^3}
	\Bigg) =\cO\Big( (q\psi)^{-1} \Big).
\end{align*}
This completes the proof of \eqref{eq:19-07a} and it remains to verify  \eqref{f1bound}-\eqref{f3bound}. We begin with the following observation. For given $\tilde A_1, \tilde A_2, \tilde A_3, \tilde A_4 \subset E$ define
\begin{align*}
	B_k & = B_k(\tilde A_1, \tilde A_2, \tilde A_3, \tilde A_4)  \\
	&:= \big\{ e \in E \,:\, e \text{ is an element that appears in  exactly } k \text{ of the sets } \tilde A_1, \tilde A_2, \tilde A_3, \tilde A_4 \big\}
\end{align*}
for $k=1,2,3,4$, and note that $\vert \tilde A_1 \vert + \vert \tilde A_2 \vert + \vert \tilde A_3 \vert + \vert \tilde A_4 \vert = \vert B_1 \vert + 2\vert B_2 \vert + 3\vert B_3 \vert + 4\vert B_4 \vert$. Using the independence of the random variables   $(Y_k)_{k \in E}$,  we can thus write
\begin{align}
	\Big\vert \E\big[
	 Y_{\tilde A_1} Y_{\tilde A_2} Y_{\tilde A_3} Y_{\tilde A_4} 	\big] \Big\vert
	&= \Big\vert
		  \E\big[ Y_{B_1} \big] 
		  \E\big[ Y_{B_2}^2 \big] 
		  \E\big[ Y_{B_3}^3 \big] 
		 \E\big[ Y_{B_4}^4 \big]
	\Big\vert \notag \\
	&\leq   \1_{\{ B_1 = \varnothing \}}   
		(pq)^{-\frac{\vert B_3 \vert }{2}} 
		(pq)^{-\vert B_4 \vert}=   \1_{\{ B_1 = \varnothing \}} 
		(pq)^{-\frac{\vert B_3 \vert + 2\vert B_4 \vert}{2}} \label{in_here}
\end{align}
and
\begin{align*}
	\Big\vert \E\big[
		  Y_{ \tilde A_1}  Y_{ \tilde A_2}  
	\big]  \E\big[
		  Y_{ \tilde A_3}   Y_{ \tilde A_4}
	\big] \Big\vert
	&=	\1_{\{ \tilde A_1 = \tilde A_2 \}} 
		\1_{\{ \tilde A_3 = \tilde A_4 \}}
	\leq \1_{\{ B_1 = \varnothing \}} 
		(pq)^{-\frac{\vert B_3 \vert + 2\vert B_4 \vert}{2}}.
\end{align*}

\bigskip

\paragraph{\bf Proof of  \eqref{f1bound}}
Let us first verify  \eqref{f1bound} and  assume  that $k \in A_1 \cap A_2$ and $\ell \in A_3 \cap A_4$.
With $\tilde A_1 = A_1 \backslash\{k\}$, $\tilde A_2 = A_2 \backslash\{k\}$, $\tilde A_3 = A_3 \backslash\{\ell\}$ and $\tilde A_4 = A_4 \backslash\{\ell\}$ we write 
\begin{align*}
	\vert A_1 \vert + \vert A_2 \vert + \vert A_3 \vert + \vert A_4 \vert 
	= \vert \tilde A_1 \vert + \vert \tilde A_2 \vert + \vert \tilde A_3 \vert + \vert \tilde A_4 \vert + 4 
	= \vert B_1 \vert + 2\vert B_2 \vert + 3\vert B_3 \vert + 4\vert B_4 \vert + 4
\end{align*}
and
\begin{align*}
	&\vert f_{1,k,\ell} (A_1, A_2, A_3, A_4) \vert\\
		&\leq{} \phantom{+}
			p^{-\frac{\vert A_1 \vert + \vert A_2 \vert + \vert A_3 \vert + \vert A_4 \vert}{2}}
			q^{\frac{\vert A_1 \vert + \vert A_2 \vert + \vert A_3 \vert + \vert A_4 \vert}{2}}
			\cdot \Big\vert \E\big[
				  Y_{ \tilde A_1}     Y_{ \tilde A_2}  Y_{ \tilde A_3}  Y_{ \tilde A_4} 
				\big] \Big\vert\\
			&\qquad\qquad+
			p^{-\frac{\vert A_1 \vert + \vert A_2 \vert + \vert A_3 \vert + \vert A_4 \vert}{2}}
			q^{\frac{\vert A_1 \vert + \vert A_2 \vert + \vert A_3 \vert + \vert A_4 \vert}{2}}
			 \Big\vert \E\big[
		  Y_{ \tilde A_1}  Y_{ \tilde A_2}  
	\big] \cdot \E\big[
		  Y_{ \tilde A_3}   Y_{ \tilde A_4}
	\big] \Big\vert\\
		&\leq{}
		2 \cdot \1_{\{ B_1 = \varnothing \}} \cdot
			p^{-\frac{\vert A_1 \vert + \vert A_2 \vert + \vert A_3 \vert + \vert A_4 \vert}{2} -\frac{\vert B_3 \vert + 2\vert B_4 \vert}{2}}
			q^{\frac{\vert A_1 \vert + \vert A_2 \vert + \vert A_3 \vert + \vert A_4 \vert}{2} -\frac{\vert B_3 \vert + 2\vert B_4 \vert}{2}}\\
		&\leq{}
		2 \cdot 
			p^{-\vert B_2 \vert -2\vert B_3 \vert -3\vert B_4 \vert - 2}
			q^{\vert B_2 \vert + \vert B_3 \vert + \vert B_4 \vert + 2}.
\end{align*}
Finally, note that for $i\in\{2,3,4\}$,  any edge $e \in B_i$ is contained in at least $i-1$ of the subgraphs $h_1$, $h_2$, $h_3$.
However, due to the choice of $\tilde A_i$,
the presence of $k$ in $A_1 \cap A_2 \subset h_1$ is not taken into account in the definition of the $B_i$'s.
The same applies to the presence of $\ell$ in $A_3 \cap A_4 \subset h_3$.
Therefore, we find that
\begin{align*}
	\vert B_2 \vert + 2\vert B_3 \vert + 3\vert B_4 \vert
	\leq \edges{h_1\backslash\{k\}} + \edges{h_2} + \edges{h_3\backslash\{\ell\}},
\end{align*}
which yields  
\begin{align*}
	\vert B_2 \vert + 2\vert B_3 \vert + 3\vert B_4 \vert + 2
	\leq \edges{h_1} + \edges{h_2} + \edges{h_3}
\end{align*}
and thus proves \eqref{f1bound}.

\bigskip

\paragraph{\bf Proof of  \eqref{f2bound}} Choosing $\tilde A_i = A_i \backslash\{k\}$ with $k \in A_i$ for $i\in\{1,2,3,4\}$
and then using the same arguments as before,
we see that
\begin{align*}
	\vert f_{2,k} (A_1, A_2, A_3, A_4) \vert
		\leq{}&
			p^{-\vert B_2 \vert -2\vert B_3 \vert -3\vert B_4 \vert - 3}
			q^{\vert B_2 \vert + \vert B_3 \vert + \vert B_4 \vert + 1}
\end{align*}
and
\begin{align*}
	\vert B_2 \vert + 2\vert B_3 \vert + 3\vert B_4 \vert + 3
	\leq \edges{h_1} + \edges{h_2} + \edges{h_3}.
\end{align*}
This proves \eqref{f2bound}.

\bigskip

\paragraph{\bf Proof of  \eqref{f3bound}} To prove \eqref{f3bound} we use similar arguments. We first have to look at the difference operator
$
D_\ell (   Y_{A_1 \backslash \{ k \}} Y_{A_2 \backslash \{ k \}}  )
$.
\begin{itemize}
\item[(a)] If $\ell \notin  (A_1 \cup A_2) \backslash \{ k \}$,
	this expression is just zero.
\item[(b)] If $\ell\in  (A_1 \backslash \{ k \}) \Delta (A_2 \backslash \{ k \})$, where $\Delta$ denotes the symmetric difference of two sets,
	then
	\[
	D_\ell \big(   Y_{A_1 \backslash \{ k \}} Y_{A_2 \backslash \{ k \}}  \big) =  Y_{A_1 \backslash \{ k, \ell \}}  Y_{A_2 \backslash \{ k, \ell \}},
	\]
	since $D_k Y_k   = 1$.
\item[(c)] If $\ell \in (A_1 \cap A_2) \backslash \{ k \}$,
	then
	\[
	D_\ell \big(   Y_{A_1 \backslash \{ k \}} Y_{A_2 \backslash \{ k \}}  \big) 
	=
	 \frac{ q^2-p^2 }{ 4\sqrt{pq}} \cdot 
	Y_{A_1 \backslash \{ k, \ell \}}  
	Y_{A_2 \backslash \{ k, \ell \}},
	\]
	since $D_k (Y_k^2) = \sqrt{pq}\cdot \big( (Y_k^2)^+_k - (Y_k^2)^-_k \big) = \dfrac{ q^2-p^2 }{ 4\sqrt{pq}}$.
\end{itemize}
We can condense the above  three cases into the following equation:
\begin{align*}
	D_\ell \Big( 
		Y_{A_1 \backslash \{ k \}}  Y_{A_2 \backslash \{ k \}}  
		\Big)
	&= \xi(A_1,A_2) \cdot Y_{A_1 \backslash \{ k, \ell \}}  
	Y_{A_2 \backslash \{ k, \ell \}}
 \end{align*}
with $\xi(A_1,A_2)$ satisfying the estimate
\begin{align*}
	\vert \xi(A_1,A_2) \vert
	&\leq \1_{\{ \ell \in A_1 \cup A_2 \}}
	\1_{\{ k \neq \ell \}}
	(pq)^{- \frac{1}{2}  \1_{\{ \ell \in A_1 \cap A_2 \} }    }.
\end{align*}
Hence,
by choosing $\tilde A_i = A_i \backslash\{k,\ell\}$ for $i\in\{1,2,3,4\}$ subject to  the conditions $k \neq \ell$, $k \in A_1 \cap A_2 \cap A_3 \cap A_4$ and $\ell \in (A_1 \cup A_2)  \cap  (A_3 \cup A_4)$,
 we find that
\begin{align*}
	\vert A_1 \vert + \vert A_2 \vert + \vert A_3 \vert + \vert A_4 \vert 
={}& \vert \tilde A_1 \vert + \vert \tilde A_2 \vert + \vert \tilde A_3 \vert + \vert \tilde A_4 \vert + 6 + 
		\1_{\{ \ell \in A_1 \cap A_2 \}} + \1_{\{ \ell \in A_3 \cap A_4 \}}\\
={}& \vert B_1 \vert + 2\vert B_2 \vert + 3\vert B_3 \vert + 4\vert B_4 \vert + 6 + 
		\1_{\{ \ell \in A_1 \cap A_2 \}} + \1_{\{ \ell \in A_3 \cap A_4 \}}.
\end{align*}
Thus, we can deduce from \eqref{in_here} that
\begin{align*}
	&\vert f_{3,k,\ell} (A_1, A_2, A_3, A_4) \vert\\
		\leq{}& \phantom{+}
			p^{-\frac{\vert A_1 \vert + \vert A_2 \vert + \vert A_3 \vert + \vert A_4 \vert}{2} -1}
			q^{\frac{\vert A_1 \vert + \vert A_2 \vert + \vert A_3 \vert + \vert A_4 \vert}{2} -1}
			\cdot (pq)^{-\frac{1}{2} \cdot \1_{\{ \ell \in A_1 \cap A_2 \}} -\frac{1}{2}\cdot \1_{\{ \ell \in A_3 \cap A_4 \}} }  \Big\vert \E\big[
				Y_{\tilde A_1}  Y_{\tilde A_2}  Y_{\tilde A_3}  Y_{\tilde A_4}  
				\big] \Big\vert\\
		\leq{}&
			p^{-\vert B_2 \vert -2\vert B_3 \vert -3\vert B_4 \vert - 4 - \1_{\{ \ell \in A_1 \cap A_2 \}} - \1_{\{ \ell \in A_3 \cap A_4 \}}}
			q^{\vert B_2 \vert + \vert B_3 \vert + \vert B_4 \vert + 2}.
\end{align*}
Recall that for $i\in\{2,3,4\}$,  any edge $e \in B_i$ is contained in at least $i-1$ of the subgraphs $h_1$, $h_2$ and $h_3$.
Due to the choice of $\tilde A_i$, the presence of $k$ in $h_1$, $h_2$ and $h_3$ is not taken into account in the definition of the $B_i$'s.
The same applies to the possible presence of $\ell$ in $h_1$, $h_2$, $h_3$.
Thus, we have
\begin{align*}
	\vert B_2 \vert + 2\vert B_3 \vert + 3\vert B_4 \vert
	\leq \edges{h_1\backslash\{k,\ell\}} + \edges{h_2\backslash\{k,\ell\}} + \edges{h_3\backslash\{k,\ell\}},
\end{align*}
which yields 
 \begin{align*}
	\vert B_2 \vert + 2\vert B_3 \vert + 3\vert B_4 \vert + 3 + \1_{\{ \ell \in h_1 \}} + \1_{\{ \ell \in h_2 \}} + \1_{\{ \ell \in h_3 \}}
	\leq \edges{h_1} + \edges{h_2} + \edges{h_3}.
\end{align*}
The condition $\ell \in (A_1 \cup A_2)\cap (A_3 \cup A_4)$ implies  
\begin{center}
$\1_{\{ \ell \in h_1 \}} + \1_{\{ \ell \in h_2 \}} + \1_{\{ \ell \in h_3 \}} \geq 1 + \1_{\{ \ell \in A_1 \cap A_2 \}} + \1_{\{ \ell \in A_3 \cap A_4 \}}$.
\end{center}
Therefore,
\begin{align*}
	\vert B_2 \vert + 2\vert B_3 \vert + 3\vert B_4 \vert + 4 + \1_{\{ \ell \in A_1 \cap A_2 \}} + \1_{\{ \ell \in A_3 \cap A_4 \}}
	\leq \edges{h_1} + \edges{h_2} + \edges{h_3}.
\end{align*}
This proves \eqref{f3bound} and eventually completes the proof of Theorem \ref{thm:subgraphs}.
\end{proof}

\section{Proofs III: Vertices of fixed degree in the Erd\H{o}s-R\'enyi random graph}\label{sec:ApplERgraph}

For $d\in\N_0$, we are interested in the number $V_{d}$ of vertices of degree $d$ in the Erd\H{o}s-R\'enyi random graph $\textbf{G}(n,p)$. It is known from \cite[Chapter 6.3]{JLRBook} that
\[
\E[V_d] = n{n-1\choose d}p^d(1-p)^{n-d-1}
\]
and
\begin{align*}
	\var(V_d) &= \frac{n}{n-1} \binom{n-1}{d}^2 ((n-1)p-d)^2 p^{2d-1} (1-p)^{2n-2d-3} + \E[V_d] - \frac{1}{n}(\E[V_d])^2\,.
\end{align*}
{From now on we focus on the situation where $np=\cO(1)$, which is equivalent to $(1-p)^n\asymp 1$, as $n\to\infty$, since almost surely $V_d\leq n$.} Note that in this situation
\[
\E[V_d] \asymp n^{d+1}p^d .
\]  
For $d=0$, using $ (n-1)p (1-p)^{n-2} \leq   1-  (1-p)^{n-1}  \leq  (n-1)p  $, we get 
\begin{align}
\var(V_0)  &=  n (n-1)p (1-p)^{2n-3} +   n(1-p)^{n-1}   -n(1-p)^{2n-2}  \notag \\
 &\asymp n^2p. \label{lower_d=0}
\end{align}
For $d\geq 1$ the situation is as follows:
  \begin{itemize}
  \item If $np\to 0$ as $n\to\infty$, then by direct computations
  \begin{align}\label{asym:np_0}
\var(V_d)\asymp n^{d+1}p^d.
  \end{align}
  
  \item If $0< \liminf_{n\to\infty}np \leq \limsup_{n\to\infty} np< 1$, then by direct computations, 
  \begin{align}\label{asym:0_np_1}
\var(V_d)\asymp n.
  \end{align}

 \item If $np\to \lambda\in [ 1,\infty)$ as $n\to\infty$, then by direct computations
  \begin{equation} \label{asym:np=1}
  	\begin{split}
  \lim_{n\to\infty} \frac{\E[V_d]}{n} &= \frac{e^{-\lambda}\lambda^d}{d! },
  \\ \lim_{n\to\infty} \frac{\var[V_d]}{n} &=\frac{(d-\lambda)^2}{\lambda} \left( \frac{e^{-\lambda}\lambda^d}{d! }\right)^2    +  \frac{e^{-\lambda}\lambda^d}{d! }- \left( \frac{e^{-\lambda}\lambda^d}{d! }\right)^2.
  	\end{split}
  \end{equation}    
   \end{itemize}
   It follows that $\E[V_d] \asymp \var(V_d)\asymp n$ when both $np$ and $(np)^{-1}$ are of order $\cO(1)$. We can now give the proof of Theorem \ref{thm:VertexDegrees}.

\begin{proof}[Proof of Theorem \ref{thm:VertexDegrees}]
Let $e_1,\ldots,e_{n\choose 2}$ be an arbitrary labelling of the $n\choose 2$ edges of the complete graph on $\{1,\ldots,n\}$ and  we define 
 \[
 X_k := \1_{\{ \text{$e_k$ is present in  $\textbf{G}(n,p)$}   \}} -  \1_{\{ \text{$e_k$ is not present in  $\textbf{G}(n,p)$}   \}}.
 \]
This gives us a finite sequence of independent Rademacher random variables $\{ X_k: k =1, ... , \binom{n}{2}\}$ and the random variable $F_d$  is a Rademacher functional based on $X_k$'s.  Since $F_d\in L^\infty(\Omega)$ only depends on  finitely many  Rademacher random variables for each fixed $d\in\N_0$, we automatically have that $F_d \in\mathbb{D}^{1,2}$ and    $DF_d | DL^{-1} F_d| \in\text{Dom}(\delta)$. So, we are in the set-up of Theorem \ref{thm:2ndOrderPoincare} and  thus, we need to determine the asymptotic behaviour of the quantities $B_1,\ldots,B_5$ therein.   Here, we point out that the constant $\kappa$ satisfies
\[
\kappa = \sum_{k=1}^{\binom{n}{2}} pq \asymp n^2pq,
\]
which appears in the second bound \eqref{2nd_R2} in Theorem \ref{thm:2ndOrderPoincare}. In particular, we notice that if $n^2p\to\infty$, the term $\kappa B_3$ is worse (i.e., of larger order) than $B_3$ alone.

\begin{figure}[t]
	\begin{center}
		\begin{tikzpicture}
			\coordinate (A) at (0,0);
			\coordinate (B) at (2,1);
			\coordinate (C) at (-0.1,-0.2);
			\coordinate (D) at (0.1,-0.2);
			\coordinate (E) at (-0.2,0);
			\coordinate (F) at (2-0.1,1+0.2);
			\coordinate (G) at (2+0.1,1+0.2);
			\fill (A) circle (2pt);
			\fill (B) circle (2pt);
			\draw [dashed] (A) -- (B);
			\draw (A) -- (C);
			\draw (A) -- (D);
			\draw (A) -- (E);
			\draw (B) -- (F);
			\draw (B) -- (G);
			\node at (0.9,0.7) {$e_k$};
			\node at (0,-0.4) {${\rm deg}=d$};
			\node at (2,1.4) {${\rm deg}\neq d,d+1$};
			\node at (0.75,-1) {$(V_d)_k^+-(V_d)_k^-=2$};

			\coordinate (A) at (0+4,0);
			\coordinate (B) at (2+4,1);
			\coordinate (C) at (-0.1+4,-0.2);
			\coordinate (D) at (0.1+4,-0.2);
			\coordinate (E) at (-0.2+4,0);
			\coordinate (F) at (0.2+4,0);
			\coordinate (G) at (2-0.1+4,1+0.2);
			\coordinate (H) at (2+0.1+4,1+0.2);
			\fill (A) circle (2pt);
			\fill (B) circle (2pt);
			\draw [dashed] (A) -- (B);
			\draw (A) -- (C);
			\draw (A) -- (D);
			\draw (A) -- (E);
			\draw (A) -- (F);
			\draw (B) -- (G);
			\draw (B) -- (H);
			\node at (0.9+4,0.7) {$e_k$};
			\node at (0+4,-0.4) {${\rm deg}=d+1$};
			\node at (2+4,1.4) {${\rm deg}\neq d,d+1$};
			\node at (0.75+4,-1) {$(V_d)_k^+-(V_d)_k^-=-1$};
			
			\coordinate (A) at (0+8,0);
			\coordinate (B) at (2+8,1);
			\coordinate (C) at (-0.1+8,-0.2);
			\coordinate (D) at (0.1+8,-0.2);
			\coordinate (E) at (-0.2+8,0);
			\coordinate (F) at (2+8-0.1,1+0.2);
			\coordinate (G) at (2+8+0.1,1+0.2);
			\coordinate (H) at (2+8+0.2,1);
			\fill (A) circle (2pt);
			\fill (B) circle (2pt);
			\draw [dashed] (A) -- (B);
			\draw (A) -- (C);
			\draw (A) -- (D);
			\draw (A) -- (E);
			\draw (B) -- (F);
			\draw (B) -- (G);
			\draw (B) -- (H);
			\node at (0.9+8,0.7) {$e_k$};
			\node at (0+8,-0.4) {${\rm deg}=d$};
			\node at (2+8,1.4) {${\rm deg}=d$};
			\node at (0.75+8,-1) {$(V_d)_k^+-(V_d)_k^-=2$};

			\coordinate (A) at (0,0-3);
			\coordinate (B) at (2,1-3);
			\coordinate (C) at (-0.1,-0.2-3);
			\coordinate (D) at (0.1,-0.2-3);
			\coordinate (E) at (-0.2,0-3);
			\coordinate (E1) at (0.2,0-3);
			\coordinate (F) at (2-0.1,1+0.2-3);
			\coordinate (G) at (2+0.1,1+0.2-3);
			\coordinate (H) at (2+0.2,1-3);
			\coordinate (H1) at (2-0.2,1-3);
			\fill (A) circle (2pt);
			\fill (B) circle (2pt);
			\draw [dashed] (A) -- (B);
			\draw (A) -- (C);
			\draw (A) -- (D);
			\draw (A) -- (E);
			\draw (A) -- (E1);
			\draw (B) -- (F);
			\draw (B) -- (G);
			\draw (B) -- (H);
			\draw (B) -- (H1);
			\node at (0.9,0.7-3) {$e_k$};
			\node at (0,-0.4-3) {${\rm deg}=d+1$};
			\node at (2,1.4-3) {${\rm deg}=d+1$};
			\node at (0.75,-1-3) {$(V_d)_k^+-(V_d)_k^-=-2$};
			
			\coordinate (A) at (0+4,0-3);
			\coordinate (B) at (2+4,1-3);
			\coordinate (C) at (-0.1+4,-0.2-3);
			\coordinate (D) at (0.1+4,-0.2-3);
			\coordinate (E) at (-0.2+4,0-3);
			\coordinate (F) at (2-0.1+4,1+0.2-3);
			\coordinate (G) at (2+0.1+4,1+0.2-3);
			\coordinate (H) at (2+0.2+4,1-3);
			\coordinate (H1) at (2-0.2+4,1-3);
			\fill (A) circle (2pt);
			\fill (B) circle (2pt);
			\draw [dashed] (A) -- (B);
			\draw (A) -- (C);
			\draw (A) -- (D);
			\draw (A) -- (E);
			\draw (B) -- (F);
			\draw (B) -- (G);
			\draw (B) -- (H);
			\draw (B) -- (H1);
			\node at (0.9+4,0.7-3) {$e_k$};
			\node at (0+4,-0.4-3) {${\rm deg}=d$};
			\node at (2+4,1.4-3) {${\rm deg}=d+1$};
			\node at (0.75+4,-1-3) {$(V_d)_k^+-(V_d)_k^-=0$};
			
			\coordinate (A) at (0+8,0-3);
			\coordinate (B) at (2+8,1-3);
			\coordinate (C) at (-0.1+8,-0.2-3);
			\coordinate (D) at (0.1+8,-0.2-3);
			\coordinate (E) at (2-0.1+8,1+0.2-3);
			\coordinate (F) at (2+0.1+8,1+0.2-3);
			\fill (A) circle (2pt);
			\fill (B) circle (2pt);
			\draw [dashed] (A) -- (B);
			\draw (A) -- (C);
			\draw (A) -- (D);
			\draw (B) -- (E);
			\draw (B) -- (F);
			\node at (0.9+8,0.7-3) {$e_k$};
			\node at (0+8,-0.4-3) {${\rm deg}\neq d,d+1$};
			\node at (2+8,1.4-3) {${\rm deg}\neq d,d+1$};
			\node at (0.75+8,-1-3) {$(V_d)_k^+-(V_d)_k^-=0$};
		\end{tikzpicture}
	\end{center}
	\caption{The six possible cases for $(V_d)_k^+-(V_d)^-_k$. Here, ${\rm deg}=d$, ${\rm deg}=d+1$ or ${\rm deg}\neq d,d+1$ means that the vertex has degree $d$, $d+1$ or degree $\notin\{d,d+1\}$, respectively. In our illustration we have chosen $d=3$.}
	\label{fig:3casesIsolatedVertices}
\end{figure}
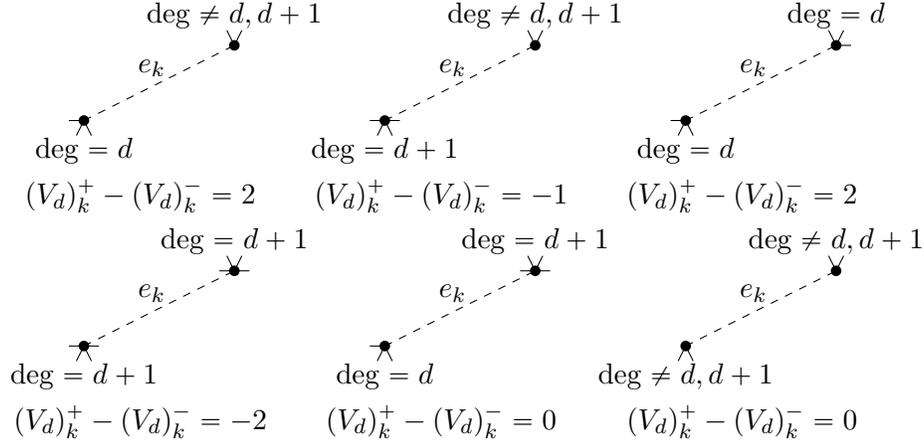
 
 \medskip
 
We first control the random variables $D_kF$ and $D_\ell D_kF$, for every $k,\ell=1,\dotsc,\binom{n}{2}$. We start with the first-order discrete gradient $D_kF$. For $k=1,\dotsc,\binom{n}{2}$, we have that
\begin{align*}
	D_kF_d = \sqrt{pq}((F_d)_k^+-(F_d)_k^-) = \frac{\sqrt{pq}}{\sqrt{\var(V_d)}}((V_d)_k^+-(V_d)_k^-)\,.
\end{align*}
Note that, for every $k=1,\dotsc,\binom{n}{2}$, $(V_d)_k^+$ equals the number of vertices of degree $d$ in $\textbf{G}(n,p)$ when $e_k$ belongs to $\textbf{G}(n,p)$, while $(V_d)_k^-$ equals the number of vertices of degree $d$ in $\textbf{G}(n,p)$ when $e_k$ does not belong to $\textbf{G}(n,p)$. Now, adding or removing an edge in $\textbf{G}(n,p)$ can both result in an increase or decrease of the total amount of vertices of degree $d$. For example, if $e_k$ connects two vertices of degree $d$, removing $e_k$ would set the counter down by 2. If $e_k$ connects two vertices of degree $d+1$, removing $e_k$ would set the counter up by 2. We thus have to distinguish the following six cases, see Figure \ref{fig:3casesIsolatedVertices}:
\begin{itemize}
	\item[-] \textit{Case 1.} $e_k$ connects a vertex of degree $d$ with a vertex of degree $\neq d,d+1$. In this case, $$(V_d)_k^+-(V_d)_k^- = 1\,.$$
	\item[-] \textit{Case 2.} $e_k$ connects a vertex of degree $d+1$ with a vertex of degree $\neq d,d+1$. In this case, $$(V_d)_k^+-(V_d)_k^- = -1\,.$$ 
	\item[-] \textit{Case 3.} $e_k$ connects two vertices of degree $d$. In this case, $$(V_d)_k^+-(V_d)_k^- = 2\,.$$
	\item[-] \textit{Case 4.} $e_k$ connects two vertices of degree $d+1$. In this case, $$(V_d)_k^+-(V_d)_k^- = -2\,.$$
	\item[-] \textit{Case 5.} $e_k$ connects a vertex of degree $d$ with a vertex of degree $d+1$. In this case, $$(V_d)_k^+-(V_d)_k^- = 0\,.$$
	\item[-] \textit{Case 6.} $e_k$ connects two vertices of degree $\neq d,d+1$. In this case, $$(V_d)_k^+-(V_d)_k^- = 0\,.$$
\end{itemize}
We emphasize that the cases 1, 3 and 5 do not occur if we consider the isolated vertex counting statistic $V_0$. Thus, by the above discussion we have that, for every $k=1,\dotsc,\binom{n}{2}$,
\begin{align}\label{Vertex degree count D_k bound 1}
	|D_kF_d| \leq \frac{2\sqrt{pq}}{\sqrt{\var(V_d)}}\,.
\end{align}
Next, we analyze the second-order discrete gradient $D_\ell D_kF_d$ for all $k,\ell=1, \dotsc, \binom{n}{2}$:
\begin{itemize}
\item  For $k = \ell$, we have that $D_\ell D_kF_d = 0$.
\item  For $k \neq \ell$, we have that
\begin{align*}
	D_\ell D_kF_d &= \frac{\sqrt{pq}}{\sqrt{\var(V_d)}}D_\ell\big(  (V_d)_k^+-(V_d)_k^- \big)  \\
	&=  \frac{\sqrt{pq}}{\sqrt{\var(V_d)}}D_\ell((V_d)_k^+-(V_d)_k^-)\1_{\{ | e_k \cap e_\ell| =1 \}},
\end{align*}
where   $| e_k \cap e_\ell| =1 $ means the two edges $e_k, e_\ell$ have only one common endpoint and the last equality follows from the fact that the difference  $(V_d)_k^+-(V_d)_k^-$ does not depend on the edge $e_\ell$. 
\end{itemize}
 By the same discussion as in  Figure \ref{fig:3casesIsolatedVertices},  we have that  $| D_\ell (V_d)_k^+| \leq 2\sqrt{pq}$ and  $| D_\ell (V_d)_k^-| \leq 2\sqrt{pq}$. 
Therefore,  
\begin{align}
	|{D_\ell D_kF_d}| &\leq   \frac{\sqrt{pq}}{\sqrt{\var(V_d)}}  \Big(\big\vert D_\ell (V_d)_k^+\big\vert  +    \big\vert  D_\ell (V_d)_k^- \big\vert   \Big)\1_{\{ | e_k \cap e_\ell| =1 \}}  
	\leq \frac{4pq}{\sqrt{\var(V_d)}} {\bf 1}_{\{\absolute{e_k \cap e_\ell}=1\}}.\label{Vertex degree count D_k bound 2}
\end{align}

We are now ready  to derive estimates for each of the terms $B_1,\ldots,B_5$.  By using  \eqref{Vertex degree count D_k bound 1} and \eqref{Vertex degree count D_k bound 2}, we get
\begin{align*}
	B_1 &= \sum_{j,k,\ell = 1}^{\binom{n}{2}} \Big(\E\big[(D_jF_d)^2(D_kF_d)^2\big] \Big)^{1/2} \Big(\E\big[(D_\ell D_jF_d)^2(D_\ell D_kF_d)^2\big]\Big)^{1/2} \notag\\
	&\leq \frac{64(pq)^3}{\big(\var(V_d)\big)^2} \sum_{j,k,\ell = 1}^{\binom{n}{2}} {\bf 1}_{\{ \absolute{e_j \cap e_\ell}=1 \}} {\bf 1}_{\{ \absolute{e_k \cap e_\ell}=1 \}} = \frac{64(pq)^3}{\big(\var(V_d)\big)^2} \binom{n}{2}(2(n-2))^2 \asymp \frac{(pq)^3n^4}{\big(\var(V_d)\big)^2}.  
\end{align*}
Similarly,  we get that
\begin{align*}
	B_2 &= \frac{1}{pq} \sum_{j,k,\ell = 1}^{\binom{n}{2}} \E\big[(D_\ell D_jF_d)^2(D_\ell D_kF_d)^2 \big] \leq \frac{256(pq)^3}{\big(\var(V_d)\big)^2} \sum_{j,k,\ell = 1}^{\binom{n}{2}} {\bf 1}_{\{ \absolute{e_j \cap e_\ell}=1 \}} {\bf 1}_{\{ \absolute{e_k \cap e_\ell}=1 \}} \\
	&= \frac{256(pq)^3}{\big(\var(V_d)\big)^2} \binom{n}{2}(2(n-2))^2 \asymp \frac{(pq)^3n^4}{(\var(V_d))^2},
\end{align*}
and  
 \begin{align*}
	B_3 = \frac{1}{pq} \sum_{k=1}^{\binom{n}{2}} \E[(D_kF_d)^4] \leq \frac{16pq}{(\var(V_d))^2} \binom{n}{2} \asymp \frac{pq n^2}{(\var(V_d))^2}. 
\end{align*}
Recall the second bound \eqref{2nd_R2} in Theorem \ref{thm:2ndOrderPoincare} and note that  the coefficient $\kappa$ satisfies $\kappa = \binom{n}{2} pq \asymp n^2 pq$. Then 
\[
\kappa B_3 \leq   \frac{p^2q^2 n^4}{(\var(V_d))^2}.
\]
It remains to estimate $B_4$ and $B_5$:
\begin{align*}
	B_4 &= \frac{1}{pq} \sum_{k,\ell = 1}^{\binom{n}{2}} (\E[(D_kF_d)^4])^{1/2} (\E[(D_\ell D_kF_d)^4])^{1/2} \leq \frac{64(pq)^2}{(\var(V_d))^2} \sum_{k,\ell = 1}^{\binom{n}{2}} {\bf 1}_{\{ \absolute{e_k \cap e_\ell}=1 \}}\\
	&= \frac{64(pq)^2}{(\var(V_d))^2} \binom{n}{2}2(n-2) \asymp \frac{(pq)^2}{(\var(V_d))^2} n^3\notag
\end{align*}
and
\begin{align*}
	B_5 &= \frac{1}{(pq)^2} \sum_{k,\ell=1}^{\binom{n}{2}} \E[(D_\ell D_k F_d)^4] \leq \frac{256(pq)^2}{(\var(V_d))^2} \sum_{k,\ell=1}^{\binom{n}{2}} 
	{\bf 1}_{\{ \absolute{e_k \cap e_\ell}=1 \}} \\
	&= \frac{256(pq)^2}{(\var(V_d))^2} \binom{n}{2}2(n-2) 
		\asymp \frac{(pq)^2}{(\var(V_d))^2} n^3.
\end{align*}
Now, let us prove the bound \eqref{Kol_F_0} in the regime $np = \cO(1)$ and $n^2p \to\infty$, as $n\to\infty$. Using the above estimates and the   bound \eqref{lower_d=0},  we get
\[
d_K(F_0, N) = \cO(p^{1/2} + n^{-1/2} + n^{-1} p^{-1/2}) = \cO(n^{-1} p^{-1/2}).
\]
Now, let us turn to the case $d\geq 1$. When $np\to 0$, $\var(V_d) \asymp n^{d+1} p^d$ and  $n^4p^3 + n^3p^2  = \cO(n^2p)$, then
\begin{align*}
d_K( F_d, N) =\cO \left(   \frac{ n^4p^3 + n^2p + n^3 p^2 }{ n^{2d+2} p^{2d}} \right)^{1/2} = \cO\big( p^{1/2} (np)^{-d}  \big).
\end{align*}
On the other hand, if $np =\cO(1)$ and $\liminf_{n\to\infty} np > 0$, we have that $\var(V_d) \asymp n$ and $n^4p^3 \asymp n^2p \asymp n^3 p^2 \asymp n$ so that
\begin{align*}
d_K( F_d, N) =\cO \left(   \frac{ n^4p^3 + n^2p + n^3 p^2 }{ n^2} \right)^{1/2} = \cO\big( n^{-1/2} \big).
\end{align*}
The proof is thus complete.
\end{proof}

\begin{remark}\label{rem_5} Recall the definition of $A_3$ from Remark  \ref{rem4_dw}. In the setting of Theorem \ref{thm:VertexDegrees}, we have that
\begin{align*}
A_3 =  (pq)^{-1/2} \sum_{k=1}^{\binom{n}{2}} \E\big[  | D_k F_d |^3 \big] \leq  \frac{ 8pq }{ \var(V_d)^{3/2}  }  \binom{n}{2}
 \asymp 
 \begin{cases}
 (n^2p)^{-1/2} &\text{if $d=0$} \\
 n^{-1/2}  &\text{if $d\geq 1$ and ${\displaystyle\liminf_{n\to\infty} np >0}$} \\
 n^{\frac{1}{2} - \frac{3d}{2}} p^{1-\frac{3d}{2}}  &\text{if $d\geq 1$ and $np \to 0$}.
 \end{cases}\end{align*}
As a consequence,  we get a Wasserstein bound for the random variables $F_d$ with the same order if $d\in\{0,1\}$ or if $d\geq 2$ and additionally ${\displaystyle\liminf_{n\to\infty} np >0}$, while the Wasserstein bound for   $d\geq 2$ in the case that $np\to 0$ is worse than our Kolmogorov bound, since 
\[
\frac{ n^{\frac{1}{2} - \frac{3d}{2}} p^{1-\frac{3d}{2}} }{ p^{1/2} (np)^{-d}   } = (np)^{\frac{1-d}{2} }\to \infty.
\]

\end{remark}

\section{Proofs IV: Isolated faces in the random $\kk$-complex}\label{sec:Complexes}

We recall that $I$ stands for the number of isolated $(\kk-1)$-faces in the random $\kk$-complex $\mathbf{Y}_\kk(n,p)$ and that $F=(I-\E I)/\sqrt{\var(I)}$. To see that $F$ is a Rademacher functional, we let $\big\{f_j: j=1,   \ldots, \binom{n}{ \kk+1} \big\}$ be an arbitrary labelling of the $\binom{n}{\kk+1}$ $\kk$-faces of an $(n-1)$-dimensional simplex  with $n$ vertices and put
\[
X_j := {\bf 1}_{\{f_j\text{ is present in }{\bf Y}_\kk(n,p)\}} - {\bf 1}_{\{f_j\text{ is not present in }{\bf Y}_\kk(n,p)\}}, j=1, ... , \binom{n}{ \kk+1},
\]
which are  i.i.d.  Rademacher random variables.

Clearly, $I$ (and hence $F$) is a functional over $\big\{X_j : j=1,\ldots,\binom{n}{\kk} \big\}$. Since $F\in L^\infty(\Omega)$ only depends on finitely many Rademacher random variables, all conditions of Theorem \ref{thm:2ndOrderPoincare} are automatically satisfied. 
We start our analysis by observing that
\begin{align*}
	\E\big[  I  \big] = {n\choose \kk}(1-p)^{n-\kk}.
\end{align*}
Now, let us  determine the variance $\var(I)$.  We denote by $\cF_{\kk-1}$   the set of $(\kk-1)$-faces of the $(n-1)$-dimensional simplex   and we can  represent $I$ as 
\[
I  = \sum_{f\in\cF_{\kk-1}}{\bf 1}_{\{f\text{ is isolated}\}}.
\] 
From now on, we only consider the case $\kappa\geq 2$, since the case $\kappa=1$ reduces to the setting in part (a) of  Theorem \ref{thm:VertexDegrees}.
The estimation of  $\var(I)$ begins with the following expression:
\[
\var(I) = \sum_{f\in\cF_{\kk-1}}\var({\bf 1}_{\{f\text{ is isolated}\}}) +  \sum_{f,g\in\cF_{\kk-1}  }\cov({\bf 1}_{\{f\text{ is isolated}\}},{\bf 1}_{\{g\text{ is isolated}\}}) \1_{\{ f\neq g\}}.
\]
Clearly, $\var({\bf 1}_{\{f\text{ is isolated}\}})=(1-p)^{n-\kk}(1-(1-p)^{n-\kk})$ for $f\in\cF_{\kk-1}$. Moreover,   the covariance in the second sum is non-zero   if and only if $f$ and $g$ share a common $(\kk-2)$-face;  in this case      
\begin{align*}
\E\big(  {\bf 1}_{\{f\text{ is isolated}\}}{\bf 1}_{\{g\text{ is isolated}\}} \big) = (1-p)^{n-\kk-1}(1-p)^{n-\kk-1}(1-p) = (1-p)^{2(n-\kk)-1}
\end{align*}
and hence
\begin{align*}
\var(I) &= {n\choose \kk}(1-p)^{n-\kk}(1-(1-p)^{n-\kk}) + {n\choose \kk-1}{n-\kk+1\choose 2}p(1-p)^{2(n-\kk)-1}.
\end{align*}
Under the assumptions of Theorem \ref{thm:Isolated faces}, this yields   $\var(I)\asymp n^{\kk+1}p$, as $n\to\infty$.

\begin{proof}[Proof of Theorem \ref{thm:Isolated faces}]
As already noted above, all assumptions of Theorem \ref{thm:2ndOrderPoincare} are satisfied and it remains to bound the terms $B_1,\ldots,B_5$. To do so, we start by controlling the discrete gradient $D_kF$ for $k=1,\ldots,{n\choose \kk+1}$. By definition,
\[
D_kF = {\sqrt{pq}\over\sqrt{\var(I)}}\big(I_k^+-I_k^-\big). 
\]
By construction of the random $\kappa$-complex, we have that $|I_k^+-I_k^-|\leq \kk+1$ almost surely, implying the bound
\[
|D_kF| \leq {(\kk+1)\sqrt{pq}\over\sqrt{\var(I)}}.
\]
Furthermore, the iterated discrete gradient $D_\ell D_kF$, $k,\ell=1,\ldots,{n\choose \kk+1}$ can be non-zero only when  $f_k$ and $f_\ell$  share a common $(\kk-1)$-face. So,
\[
|D_\ell D_kF| \leq {2(\kk+1)pq\over\sqrt{\var(I)}}{\bf 1}_{\{f_k\text{ and }f_\ell\text{ share a common $(\kk-1)$-face}\}}
\]
by the triangle inequality and the bound for the first-order discrete gradient.

We can now bound the five terms in Theorem \ref{thm:2ndOrderPoincare}. First, we obtain
\begin{align*}
B_1 &\leq {4(\kk+1)^4(pq)^3\over(\var(I))^2}\sum_{j,k,\ell=1}^{n\choose \kk+1}{\bf 1}_{\{f_j\text{ and }f_\ell\text{ share a common $(\kk-1)$-face}\}}\\
&\hspace{5cm}\times{\bf 1}_{\{f_k\text{ and }f_\ell\text{ share a common $(\kk-1)$-face}\}}\\
&=\cO\Big({p^3\over(n^{\kk+1}p)^2}n^{\kk+1}n^2\Big) = \cO(pn^{-\kk+1}),
\end{align*}
and we can obtain the same bound   for $B_2$.  For $B_3$, we have
\begin{align*}
B_3 &\leq {(\kk+1)^4pq\over(\var(I))^2}{n\choose \kk+1} = \cO\Big({p\over(n^{\kk+1}p)^2}n^{\kk+1}\Big) = \cO(n^{-(\kk+1)}p^{-1}).
\end{align*}
For $B_4$,  we deduce the inequality
\begin{align*}
B_4 &\leq {4(\kk+1)^4(pq)^2\over(\var(I))^2}\sum_{k,\ell=1}^{n\choose \kk+1}{\bf 1}_{\{f_k\text{ and }f_\ell\text{ share a common $(\kk-1)$-face}\}}\\
&=\cO\Big({p^2\over(n^{\kk+1}p)^2}n^{\kk+1}n\Big) = \cO(n^{-\kk})
\end{align*}
and the same bound also holds for $B_5$. As a consequence, from Theorem \ref{thm:2ndOrderPoincare} we get
$$
d_K(F,N) = \cO(p^{1/2}n^{(-\kk+1)/2}+n^{-(\kk+1)/2}p^{-1/2}+n^{-\kk/2}) = \cO(n^{-(\kk+1)/2}p^{-1/2}),
$$
and the proof of Theorem \ref{thm:Isolated faces} is complete.
\end{proof}

\begin{remark}
Similar to the estimates in Remark \ref{rem_5}, the quantity $A_3$ satisfies $A_3 = \cO\big(  n^{-\frac{\kappa+1}{2}} p^{-1/2}  \big)$, so that we can deduce from \eqref{2nd_dw} the Wasserstein bound 
\[
d_W(F, N) =\cO\big(  n^{-(\kappa+1)/2} p^{-1/2}  \big)
\]  
for the number of isolated $(\kappa-1)$-faces in the random $\kappa$-complex.
\end{remark}

\section{Proofs V: Vertices of fixed degree in hypercube percolation}\label{sec:Hypercube}

In this section we study the number of vertices $V_d$ of a fixed degree $d\in\N_0$ in the random graph ${\bf H}(n,p)$. Observe that $V_d$ can be written as a sum of random variables $I_i$, where 
\[
I_i:= \1_{\{ \text{$i$th vertex has degree $d$} \}}.
\]   
Further recall that the $n$-dimensional hypercube has exactly $2^n$ vertices and that exactly $n$ of its edges meet at each of its vertices (in geometry one says that the $n$-dimensional hypercube is a `simple' polytope). Thus, 
\[
\E[I_i]=p^d(1-p)^{n-d} \binom{n}{d}
\,\, \text{ for all $i\in\{1,\ldots,2^n\},$}
 \] and so
\[
\E[V_d] = 2^n\binom{n}{d} p^d(1-p)^{n-d}\,.
\]
To compute the variance of $V_d$, we use  
\[
\var(V_d) = \sum_{i=1}^{2^n}\var(I_i)+ \sum_{i,  j =1}^{2^n}\cov(I_i,I_j) \1_{\{ i\neq j\}} \quad{\rm and} \quad \var(I_i)=\E[I_i](1-\E[I_i]).
\]
 Moreover, if the vertices $i$ and $j$ are not adjacent in the hypercube (denoted by $i\not\sim j$), then the random variables $I_i$ and $I_j$ are independent, which implies that $\cov(I_i,I_j)=0$. Moreover, if $i$ and $j$ are neighbouring vertices in the hypercube (denoted by $i\sim j$) {and $d\geq 1$},
\begin{align*}
	\E[I_iI_j] &=\E\big[I_iI_j \, \vert \,  \text{$i\sim j$ in $\mathbf{H}(n,p)$}  \big]\PP\big[ \text{$i\sim j$ in $\mathbf{H}(n,p)$} \big] \\
	&\qquad\qquad\qquad+ \E[I_iI_j\, \vert \,  \text{$i\not\sim j$ in $\mathbf{H}(n,p)$}\big] \PP\big[  \text{$i\not\sim j$ in $\mathbf{H}(n,p)$} \big]  \\
	&= {n-1\choose d-1}^2p^{2d-1}(1-p)^{2(n-d)}+{n-1\choose d}^2p^{2d}(1-p)^{2(n-d)-1}\,,
\end{align*}
where  $\big\{ i\sim j$ in $\mathbf{H}(n,p)\big\}$ stands for the event that the neighbouring vertices $i,j$ are connected by an edge in the random graph  $\mathbf{H}(n,p)$. If $i\sim j$ and $d=0$, we easily get
\[
\E[I_iI_j] = (1-p)^{2n-1}.
\]
Therefore,  when $i\sim j$ in the hypercube, 
\begin{align*}
\cov(I_i,I_j) =\begin{cases}
{\displaystyle  \binom{n-1}{d}^2 p^{2d-1} (1-p)^{2n-2d-1} \left[ \frac{d^2(1-p)}{(n-d)^2} + p - \frac{n^2 p(1-p)}{(n-d)^2}   \right]  }& \text{if}~ d\geq 1 ;\\
  (1-p)^{2n-1}p & \text{if}~  d=0. 
  \end{cases}
\end{align*}
It follows that for $d\geq 1$,
\begin{align*}
	\var(V_d) &= 2^n \var(I_1) + \sum_{i=1}^{2^n} \sum_{j: j\sim i} \cov(I_i, I_j)   \\
	&= 2^n \binom{n}{d} p^d(1-p)^{n-d}\left(1- \binom{n}{d} p^d(1-p)^{n-d} \right)\\
	&\qquad+ 2^n n \binom{n-1}{d}^2 p^{2d-1} (1-p)^{2n-2d-1} \left[ \frac{d^2(1-p)}{(n-d)^2} + p - \frac{n^2 p(1-p)}{(n-d)^2}   \right] \\
	&= 2^n \binom{n}{d} p^d(1-p)^{n-d} +  2^n n \binom{n-1}{d}^2 p^{2d-1} (1-p)^{2n-2d-1} \left[ \frac{d^2(1-p)}{(n-d)^2} + p    \right]
\end{align*}
since the hypercube has exactly $n2^{n-1}$ edges.  For $d=0$,
\[
\var(V_0) = 2^n (1-p)^n - 2^n (1-p)^{2n} + n2^n (1-p)^{2n-1} p \geq 2^{n+1} np (1-p)^{2n-1}.
\]
When $p\to 0$ or $p\to1$ slower than exponential, we have that for any $\e\in(0,1)$, 
\begin{center}
$\var(V_0)  \geq (2-\e)^n$ for sufficiently large $n$.
\end{center}
When $p\to 0$ or $p\to1$ slower than exponential, we also have that for $d\geq 1$,
\begin{center}
$\var(V_d)  \geq (2-\e)^n$ for sufficiently large $n$.
\end{center}
We can now prove Theorem \ref{thm:IsolatedVerticesHypercube}.

\begin{proof}[Proof of Theorem \ref{thm:IsolatedVerticesHypercube}] Let   $e_1, ..., e_{n2^{n-1}}$ be arbitrarily fixed labelling of the $n2^{n-1}$ edges in the hypercube.  Define
\[
X_i := \1_{\{ \text{$e_i$ is kept in $\mathbf{H}(n,p)$}  \}} -  \1_{\{ \text{$e_i$ is removed in $\mathbf{H}(n,p)$}  \}},\qquad i\in\{1, ... , n2^{n-1}\} ,
\]
which is a sequence of independent and identically distributed Rademacher random variables with success probability $p$ (we write $q=1-p$ in what follows). The random variable $F_d\in L^\infty(\Omega)$  only depends on a finite sequence of Rademacher random variables, then all assumptions  in Theorem \ref{thm:2ndOrderPoincare} are satisfied and we just need to bound the terms $B_1,\ldots,B_5$ there. For this, we first notice that the first-order discrete gradient  and second-order discrete gradient $D_kF$, $k=1,\ldots, n2^{n-1}$, and $D_\ell D_k F$, $k,\ell=1,\ldots, n2^{n-1}$, are bounded as follows:
	\[
	|D_kF| \leq {2\sqrt{pq}\over\sqrt{\var(V_d)}} \quad {\rm and} \quad  |D_\ell D_kF| \leq {4pq\over\sqrt{\var(V_d)}}\,{\bf 1}_{\{|e_k\cap e_\ell|=1\}},
	\]
where we recall that 	$\{|e_k\cap e_\ell|=1\}$ means the edges $e_k$ and $e_\ell$ have exactly one common endpoint.  The proof is exactly the same as in the proof of Theorem \ref{thm:VertexDegrees}. Hence, using what we already have computed for $B_1,\ldots,B_5$ in the proof of this theorem and recalling that the hypercube has exactly $n2^{n-1}$ edges, we see that
	\begin{align*}
		B_1 &\leq {64(pq)^3\over(\var(V_d))^2} \sum_{j,k, \ell=1}^{n2^{n-1}} \1_{\{|e_j\cap e_\ell|= |e_k\cap e_\ell|=1\}}, & 	B_4 & \leq {64(pq)^2\over(\var(V_d))^2} \sum_{k,\ell=1}^{n2^{n-1}}{\bf 1}_{\{|e_k\cap e_\ell|=1\}},\\
		B_2 &\leq {256(pq)^3\over(\var(V_d))^2}\sum_{j,k,\ell=1}^{n2^{n-1}}\1_{\{|e_j\cap e_\ell| = |e_k\cap e_\ell|=1\}},  &  B_5 & \leq {256(pq)^2\over(\var(V_d))^2}\sum_{k,\ell=1}^{n2^{n-1}}{\bf 1}_{\{|e_k\cap e_\ell|=1\}}, \\
		B_3 & \leq {16pq\over(\var(V_d))^2}\,n2^{n-1}\,.
	\end{align*}
	Next, we notice that the sums in $B_1$ and $B_2$ are equal to $n2^{n-1}(2(n-1)(n-2)+(n-1)^2)$, since there are $n2^{n-1}$ choices for $e_\ell$, and $2(n-1)(n-2)$ possibilities to select $e_j$ and $e_k$ at the same endpoint of $e_\ell$ and $(n-1)^2$ possibilities to choose $e_j$ and $e_k$ at different endpoints of $e_\ell$. As a result, 
	\[
	\sum_{j,k, \ell=1}^{n2^{n-1}} \1_{\{|e_j\cap e_\ell|= |e_k\cap e_\ell|=1\}} \leq 3n^3 2^{n-1} = \cO\big( (2+\e)^n \big)
	\]
for any $\e\in(0,1)$. Moreover, the sum in $B_4$ and $B_5$ evaluates to 
 \[
 \sum_{k,\ell=1}^{n2^{n-1}}{\bf 1}_{\{|e_k\cap e_\ell|=1\}} =  n(n-1)2^{n} = \cO\big( (2+\e)^n \big)
 \] 
for any $\e\in(0,1)$, since there are $n2^{n-1}$ choices for $e_\ell$ and then $2(n-1)$ possibilities to select the second edge $e_k$ adjacent to $e_\ell$. 
 
 Recall that for $d\in\N_0$ and for any $\e\in(0,1)$,   $\var(V_d) \geq (2-\e)^n$ for sufficiently large $n$.  As a consequence and since $p\to 0$ or $p\to 1$ slower than exponential in $n$, we see that, for any $\e\in(0,1)$, 
	 \[
	 B_1+ \ldots +B_5 = \cO\big(  (2-\e)^{-n}\big).
	 \]
	The desired result is now a consequence of Theorem \ref{thm:2ndOrderPoincare}.
\end{proof}

\begin{remark}
Similar to the estimates in Remark \ref{rem_5},  we have for any $\e\in(0,1)$ that
\[
A_3 \leq  \frac{1}{\sqrt{pq}} \sum_{k=1}^{n2^{n-1} }  \left( \frac{2\sqrt{pq}}{\sqrt{\var(V_d)}} \right)^3 \leq {8pq\over(\var(V_d))^{3/2}}\,n2^{n-1} =\cO\big(  (2-\e)^{-n/2} \big).
\]
As a consequence, for Theorem \ref{thm:IsolatedVerticesHypercube} we have a Wasserstein bound with the same order. 
\end{remark}

\smallskip

\subsection*{Acknowledgement} {CT has been supported by the DFG priority program SPP 2265 \textit{Random Geometric Systems}.}

 \end{document}